\newtheorem{theorem}{Theorem}
\numberwithin{theorem}{section}
\newtheorem{conjecture}[theorem]{Conjecture}
\newtheorem{corollary}[theorem]{Corollary}
\newtheorem{definition}[theorem]{Definition}
\newtheorem{lemma}[theorem]{Lemma}
\newtheorem{notation}[theorem]{Notation}
\newtheorem{proposition}[theorem]{Proposition}
\newcommand{\F}{\mathbb{F}}
\newcommand{\Z}{\mathbb{Z}}
\newcommand{\Q}{\mathbb{Q}}
\newcommand{\Tr}{\operatorname{Tr}}
\newcommand{\Id}{\operatorname{Id}}
\newcommand{\Jac}{\operatorname{Jac}}
\newcommand{\Spec}{\operatorname{Spec}}
\newcommand{\Aut}{\operatorname{Aut}}
\newcommand{\Frob}{\operatorname{Frob}}
\newcommand{\GL}{\operatorname{GL}}
\newcommand{\GSp}{\operatorname{GSp}}
\newcommand{\Sp}{\operatorname{Sp}}
\newcommand{\ST}{\operatorname{ST}}
\newcommand{\charpol}{\operatorname{charpol}}
\newcommand{\Gal}{\operatorname{Gal}}
\newcommand{\mult}{\operatorname{mult}}
\newcommand{\sing}{\operatorname{sing}}
\newcommand{\smooth}{\operatorname{smooth}}
\newcommand{\abs}[1]{\left|#1\right|}
\newcommand{\mintr}{\mathbb{P}^{\operatorname{intr}}_{g, q}}
\newcommand{\mnaive}{\mathbb{P}^{\operatorname{naive}}_{g, q}}
\newcommand{\Mnh}{\mathcal{M}^{\nhyp}}
\newcommand\Fbar{\overline{\F}}
\newcommand{\nhyp}{\textup{nhyp}}
\newcommand\N{\mathcal{N}}
\newcommand{\abGal}[1]{\operatorname{Gal}\left(\overline{#1}/#1 \right)}
\theoremstyle{definition}
\newtheorem{remark}[theorem]{Remark}
\pgfplotsset{compat=1.18}
\begin{document}
\title{On the \texorpdfstring{$L$}{}-polynomials of curves over finite fields}
\date{}
\author{Francesco Ballini, Davide Lombardo, Matteo Verzobio}

\maketitle
\begin{abstract}
We discuss, in a non-Archimedean setting, the distribution of the coefficients of L-polynomials of curves of genus $g$ over $\mathbb{F}_q$. Among other results, this allows us
to prove that the $\mathbb{Q}$-vector space spanned by such characteristic polynomials has dimension $g+1$. We also state a conjecture about the Archimedean distribution of the number of rational points of curves over finite fields.

\medskip

\noindent\textit{Keywords. }{$L$-polynomials, curves, rational points, equidistribution}

\noindent\textit{2010 Mathematics subject classification.} Primary 11G20; Secondary 11G10, 14G10.

\end{abstract}

\section{Introduction}

Let $\mathbb{F}_q$ be a finite field of characteristic $p$ and order $q=p^f$. For every $g \geq 1$, we let $\mathcal{M}_g(\mathbb{F}_q)$ be the set of smooth projective curves of genus $g$ over $\mathbb{F}_q$, up to isomorphism over $\mathbb{F}_q$. 
Recall that, given a (smooth projective) curve $C/\mathbb{F}_q$, one may introduce its zeta function
\[
Z(C/\mathbb{F}_q,s) = \exp\left( \sum_{m \geq 1} \frac{\#C(\mathbb{F}_{q^m})}{m} q^{-ms} \right),
\]
and that by work of Schmidt \cite{MR1545199} and Weil \cite{MR0027151} we know that $Z(C/\mathbb{F}_q,s)$ is a rational function of $t := q^{-s}$. More precisely, we can write
\[
Z(C/\mathbb{F}_q,s)=\frac{P_C(t)}{(1-t)(1-qt)},
\]
where $P_{C}(t)$ is a polynomial (often called the $L$-polynomial of $C$) that satisfies the following:
\begin{lemma}\label{lemma: properties of L-polynomials}
\begin{enumerate}
\item $P_{C}(t)$ has integral coefficients and $P_{C}(0)=1$;
\item $\deg P_{C}(t)=2g$, where $g=g(C)$ is the genus of $C$;
\item writing $P_{C}(t)=\sum_{i=0}^{2g} a_i t^i$ we have the symmetry relations $a_{g+i}=q^ia_{g-i}$ for every $i=0,\ldots,g$.
\end{enumerate}
\end{lemma}

Our main object of interest in this paper is the set of $L$-polynomials of all the curves of a given genus over a finite field $\mathbb{F}_q$:
\begin{definition}\label{def:Pg}
Given a finite field $\mathbb{F}_q$ and a positive integer $g$ we define
\[
\mathcal{P}_g(\mathbb{F}_q) := \{ P_C(t) \, \bigm\vert \, C \in \mathcal{M}_g(\mathbb{F}_q) \}.
\]
\end{definition}

We will focus in particular on the non-Archimedean distribution of these $L$-polynomials. For a fixed integer $N \geq 2$, upon reduction modulo $N$ one obtains from $\mathcal{P}_g(\mathbb{F}_q)$ a set $\mathcal{P}_{g, N}(\mathbb{F}_q)$ of polynomials in $(\Z/N\Z)[t]$. Considering this set of reduced polynomials both for a fixed value of $q$ and in the limit $q \to \infty$, we obtain results in three different but related directions:
\begin{enumerate}
    \item We adapt results of Katz-Sarnak from the Archimedean to the non-Archimedean setting, obtaining equidistribution statements for $\mathcal{P}_{g, N}(\mathbb{F}_q)$ as $q \to \infty$ (Theorem \ref{thm: equidistribution of charpolys}). While special instances of this result appear in the literature (especially for the case of elliptic curves, see \cite{MR3017927, gekeler2003frobenius}), the general case does not seem to have been explored previously -- though see \cite{MR2459984} for related results.
    \item The previous result allows us to disprove a recent conjecture by Bergström--Howe--Lorenzo García--Ritzenthaler \cite[Conjecture 5.1]{bergström2023refinements} about the \textit{Archimedean} distribution of the number of rational points of non-hyperelliptic curves over finite fields {(see Proposition 
    \ref{prop: conjecture does not hold, intuitive version}} and the discussion before it).
    Theorem \ref{thm: equidistribution of charpolys}, combined with the general Lang-Trotter philosophy, leads us to propose a new conjecture (Conjecture \ref{conj:main}), which seems both more natural (in view of the general principles that seem to regulate statistical phenomena in arithmetic) and in better accord with the numerical evidence (see Section \ref{sec:numericalev}).
    \item Finally, Theorem \ref{thm: equidistribution of charpolys} easily implies that, for a fixed genus $g$ and for $q \gg_g 1$, the set $\mathcal{P}_g(\mathbb{F}_q)$ spans a $\Q$-vector space of dimension $g+1$ (Remark \ref{rmk: equidistribution implies Kaczorowski-Perelli}). By considering more carefully the set $\mathcal{P}_{g, 2}(\mathbb{F}_q)$ for every fixed value of $q$, we are able to prove that this statement does, in fact, hold for all pairs $(g, q)$ (Theorem \ref{thm:OddCharacteristic}), thus confirming a conjecture of Kaczorowski and Perelli \cite[Remark 8]{PerelliKaczorowski}. {The proof is based on properties of $L$-polynomials modulo $2$ which have also recently been explored, with different aims, in \cite{zbMATH07912221}.} Using Theorem \ref{thm: equidistribution of charpolys} we can also obtain an asymptotic result for non-linear relations among the coefficients of elements of $\mathcal{P}_g(\mathbb{F}_q)$, see Theorem \ref{thm:PolynomialRelations}.
\end{enumerate}

Recently, much attention has been devoted to questions close to those that we consider here: in addition to the aforementioned \cite{bergström2023refinements}, we also refer the reader to \cite{MR3338118}, as well as \cite{MR4595380}, \cite{ma2023refinements}, and \cite{shmakov2023cohomological}. We discuss some relations between our work and these latter papers in Remark \ref{rmk: recent work}. We believe that different parts of the mathematical community are approaching the same questions we discuss in this paper from complementary perspectives, and we hope that the present work will also encourage a fruitful exchange of ideas between these different points of view.

For this introduction, we focus more specifically on our contributions. The non-Archimedean behaviour of the $L$-polynomials is closely related to the (geometric version of the) Chebotarev density theorem, in the following sense. Let $\mathcal{C} \xrightarrow{\pi} S \to \operatorname{Spec} \mathbb{Z}$ be a versal family of curves of genus $g$, that is, a family in which every isomorphism class of curves of genus $g$ appears at least once (we use the tri-canonically embedded family, see Section \ref{sect: distribution of L-polynomials mod N} for details). Considering the $N$-torsion sections of $\Jac \mathcal{C} \to S$ gives rise to a Galois cover $S' \to S$ whose Galois group $G_N$ is a subgroup of $\GL_{2g}(\Z/N\Z)$ -- essentially, $S'$ is the minimal cover of $S$ over which all the $N$-torsion sections of $\Jac \mathcal{C}$ are defined.
For every closed point $s \in S$ we have a curve $C_{s}$, defined over the finite field $\kappa(s)$, and a Frobenius element $\operatorname{Frob}_{s, N} \in G_N$. Note that this Frobenius is an element of the Galois group of the cover, and is determined by the property of inducing the finite-field Frobenius $t \mapsto t^{(\#\kappa(s))}$ on the residue field at a point $s' \in S'$ lying over $s$. As usual, $\Frob_{s, N}$ is only well-defined up to conjugacy, or equivalently, up to the choice of the point $s' \in S'$ lying over $s$. The reduction modulo $N$ of the $L$-polynomial of $C_s$ is determined by the characteristic polynomial of $\operatorname{Frob}_{s, N}$, so equidistribution results for $\operatorname{Frob}_{s, N}$ translate into equidistribution results for $P_{C} \bmod N$. We make this precise in Section \ref{sect: distribution of L-polynomials mod N}, using Deligne and Katz's equidistribution theorem instead of Chebotarev's.

Having precise control over the non-Archimedean distribution of $L$-polynomials is sufficient to show that the values of $F_q(t) = \#\{C : C \in \mathcal{M}_g(\F_q), \#C(\F_q)=t \}$ show significant local oscillations -- consecutive values of $t \in \mathbb{N}$ can correspond to wildly different values of $F_q(t)$. As already mentioned, we use this to disprove \cite[Conjecture 5.1]{bergström2023refinements}.

We propose a new conjecture that takes these local oscillations into account to compute $F_q(t)$ (we achieve this by introducing a suitable product of local factors). Here we give an informal statement: for a precise version, see Conjecture \ref{conj:main} and Remark \ref{rmk: local factor at q} for an interpretation of the quantity $\nu_\ell(q,t)$. See also the remarks after Conjecture \ref{conj:main} for a more extended discussion of the motivation behind this conjecture.
\begin{conjecture}
    Let $g\geq 1$ and $q$ be a prime power. Let $H'(q,t)$ be the `probability' that a curve $C/\F_q$ of genus $g$ has $q+1-t$ rational points. Given a prime $\ell$ define $\nu_\ell(q,t)$ as the `normalised probability that a matrix $M\in \GSp_{2g}(\Z_\ell)$ with multiplier $q$ has trace $t$' (see Equations \eqref{eq: nu ell q t} and \eqref{eq: nu q q t} for a precise definition). 
    Let $\nu_\infty(q,t)=\ST_g(t/\sqrt{q})$, where $\ST_g$ is the Sato-Tate measure in dimension $g$. Let $\nu'(q,\cdot)$ be the measure     $c\cdot\nu_\infty(q,\cdot)\prod_{\ell<\infty}\nu_\ell(q,\cdot)$, where $c$ is the normalisation constant that ensures that $\nu'$ has total mass $1$ (i.e., that it is a probability measure). 
    The $L^1$-distance between $H'(q,\cdot)$ and $\nu'(q,\cdot)$ tends to $0$ as $q \to \infty$.
\end{conjecture}

Finally, Theorem \ref{thm:OddCharacteristic} answers the following natural question:
does Lemma \ref{lemma: properties of L-polynomials} capture all the (linear) relations among the coefficients of the polynomials $P_C(t)$? In other words, what is the dimension of the $\mathbb{Q}$-vector subspace of $\mathbb{Q}[t]$ spanned by the polynomials in $\mathcal{P}_g(\mathbb{F}_q)$? 
As a consequence of Lemma \ref{lemma: properties of L-polynomials}, it is immediate to see that this space has dimension at most $g+1$. Equality holds if and only if all the linear relations among the coefficients are already listed in Lemma \ref{lemma: properties of L-polynomials}. We show that equality does in fact hold for all genera and all finite fields: this extends work of Birch \cite{MR0230682} for curves of genus 1 and of Howe-Nart-Ritzenthaler \cite{MR2514865} for curves of genus 2, and confirms the aforementioned conjecture of Kaczorowski and Perelli \cite[Remark 8]{PerelliKaczorowski}:

\begin{theorem}\label{thm:OddCharacteristic}
Let $p$ be a prime, let $f \geq 1$, and denote by $\mathbb{F}_q$ the finite field with $q=p^f$ elements. Let $\mathcal{P}_g(\mathbb{F}_q)$ be as in Definition \ref{def:Pg} and let $L_g(\mathbb{F}_q)$ be the $\mathbb{Q}$-vector subspace of $\mathbb{Q}[t]$ spanned by $\mathcal{P}_g(\mathbb{F}_q)$.
We have
\[
\dim_{\mathbb{Q}} L_g(\mathbb{F}_q) = g+1.
\]
\end{theorem}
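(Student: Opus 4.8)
The plan is to prove that the $g+1$ natural linear functionals that cut out the span—namely the coefficients $a_0, a_1, \dots, a_g$ (the symmetry relations in property (3) make $a_{g+1}, \dots, a_{2g}$ redundant)—are linearly independent *as functionals restricted to $\mathcal{P}_g(\mathbb{F}_q)$*. Equivalently, I want to exhibit enough curves whose $L$-polynomials span a space of the full dimension $g+1$. Since the upper bound $\dim_{\mathbb{Q}} L_g(\mathbb{F}_q) \leq g+1$ is already established, everything reduces to a lower bound: I must produce $g+1$ curves $\mathcal{C}_0, \dots, \mathcal{C}_g$ of genus $g$ over $\mathbb{F}_q$ whose $L$-polynomials, viewed as vectors of coefficients $(a_0, \dots, a_g) = (1, a_1, \dots, a_g) \in \mathbb{Q}^{g+1}$, are linearly independent.

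The cleanest way to do this is to translate the coefficients into point-counts. Writing $N_m = \#\mathcal{C}(\mathbb{F}_{q^m})$, the logarithmic derivative of the zeta function shows that the first $g$ power sums of the inverse roots of $P_{\mathcal{C}}(t)$—equivalently the quantities $N_m - (q^m + 1)$ for $m = 1, \dots, g$—determine and are determined by $a_1, \dots, a_g$ via Newton's identities. So it is equivalent to find genus-$g$ curves realizing a set of point-count vectors $\bigl(N_1, N_2, \dots, N_g\bigr)$ that is not contained in any proper affine-linear subspace of the appropriate dimension; concretely, I would aim to show the vectors $\bigl(N_1(\mathcal{C}_i), \dots, N_g(\mathcal{C}_i)\bigr)$, together with the constant vector coming from $a_0 = 1$, span $\mathbb{Q}^{g+1}$. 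This is a flexibility-of-point-counts statement: genus-$g$ curves over $\mathbb{F}_q$ are abundant enough that their low-degree point counts vary freely. A very natural family to try is hyperelliptic curves $y^2 = f(x)$ with $\deg f \in \{2g+1, 2g+2\}$, where $N_m$ is controlled by the character sums $\sum_{x} \chi(f(x))$ over $\mathbb{F}_{q^m}$ and one has a great deal of freedom in choosing $f$.

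The technical heart, and the step I expect to be the main obstacle, is the independence argument itself: showing that the point-count vectors genuinely fill out all $g+1$ dimensions rather than accidentally satisfying some hidden linear relation. The Weil bounds constrain each $N_m$ to the Hasse–Weil interval, but they do not by themselves guarantee that the *joint* distribution of $(N_1, \dots, N_g)$ spans the full space—there could in principle be correlations. I would attack this by constructing, for each coordinate direction, a curve (or a small perturbation of a fixed curve) whose point counts change in a controlled way in one degree while being pinned down in the others. One promising device is to start from a curve over $\mathbb{F}_q$ and pass to quadratic twists, or to glue/modify the defining polynomial locally so as to adjust $N_1$ without disturbing the qualitative behavior at higher degrees; another is an inductive construction on $g$, building a genus-$g$ example from a genus-$(g-1)$ one (for instance by a degeneration or a covering construction) so that the new curve introduces exactly one new independent point-count while reproducing the earlier spanning set. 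Carrying out such a construction so that the resulting determinant of the coefficient/point-count matrix is provably nonzero—uniformly in $q$ and $g$, and respecting the parity and genus constraints imposed by the characteristic being odd—is where the real work lies.
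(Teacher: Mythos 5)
Your reduction to a lower bound is correct, and your instinct to use odd-degree hyperelliptic curves $y^2=f(x)$ is exactly the right family. But the proposal stops precisely where the proof has to happen: you never exhibit the $g+1$ curves, and you never prove that any determinant is nonzero. The devices you list (quadratic twists, local perturbations of $f$, induction on $g$) are left as possible attacks on ``the main obstacle,'' and you correctly identify that controlling the joint behaviour of $(N_1,\ldots,N_g)$ over $\mathbb{Q}$ --- where each $N_m$ is a character sum subject only to Weil-type constraints --- is genuinely hard. As written, this is a plan, not a proof; the independence statement is asserted as plausible (``curves are abundant enough that their point counts vary freely'') but not established for any single choice of curves.

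The paper's key idea, which your outline is missing, is to give up on controlling the coefficients over $\mathbb{Q}$ and instead check linear independence of the $L$-polynomials \emph{modulo $2$}, which suffices since they have integral coefficients. Modulo $2$, the $L$-polynomial of $y^2=f(x)$ (with $\deg f = 2g+1$ separable) is completely determined by the degrees of the irreducible factors of $f$: Frobenius permutes the Weierstrass points, so its action on $J[2]$ is a permutation representation minus a trivial summand, giving $(t-1)P_{\mathcal{C}}(t)\equiv \prod_i (t^{d_i}-1)^{m_i} \pmod 2$. Taking $f = f_d\cdot f_{2g+1-d}$ with $f_d$ irreducible of degree $d$ for $d=0,\ldots,g$ yields the polynomials $t^{2g+1}+t^{2g+1-d}+t^d+1$ (and $t^{2g+1}+1$ for $d=0$), whose $\mathbb{F}_2$-independence is immediate by inspection of supports. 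This sidesteps entirely the analytic difficulty you flag: no character-sum estimates, no perturbation argument, no induction --- just a choice of factorization types. If you want to salvage your approach, replace ``make the point counts vary freely over $\mathbb{Q}$'' with ``make the $2$-torsion Galois action vary through prescribed cycle types,'' and the determinant computation becomes trivial.
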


The proof is based on the following observation: in order to establish the linear independence of a set of polynomials with integral coefficients, it is certainly enough to show that they are linearly independent modulo 2. In the case of the $L$-polynomial of a curve $C$, the reduction modulo 2 can be read off the action of Galois on the set of $2$-torsion points of the Jacobian of $C$. In turn, when $C$ is hyperelliptic, this action is easy to write down explicitly in terms of a defining equation of $C$: one can then find $g+1$ curves whose $L$-polynomials form a basis of $L_g(\mathbb{F}_q)$. Since the properties of the $2$-torsion points are slightly different depending on whether the characteristic is odd or even, we split our proof into two parts, one for the case $p$ odd and one for the case $p=2$. 
We remark in particular that our proof is constructive: we explicitly give $g+1$ curves whose $L$-polynomials form a basis of $L_g(\mathbb{F}_q)$, see Corollary \ref{cor:PolyCongruences} for odd $p$ and the proof in Section \ref{sec:even} for $p=2$.

We conclude this introduction by briefly describing the structure of the paper. In Section \ref{sect: distribution of L-polynomials mod N}, we prove an equidistribution result for $\mathcal{P}_{g, N}$ (see Theorem \ref{thm: equidistribution of charpolys}). In Section \ref{sec:conj} we state our conjecture on the probability that a curve has a given number of rational points (see Conjecture \ref{conj:main}). We also explain why we believe this conjecture to be true and present some numerical evidence that supports it. We further discuss the difficulties that arise in formally defining the quantities involved in the conjecture (see in particular Remark \ref{rmk:lifttozl}). This justifies the work of Section \ref{sec:wellpos}, where we prove some technical results necessary to even state Conjecture \ref{conj:main}. Finally, in Section \ref{sec:poddeven}, we prove Theorem \ref{thm:OddCharacteristic} and in Section \ref{sec:algind} we study non-linear relations among the coefficients of the polynomials in $\mathcal{P}_g(\mathbb{F}_q)$. 

\smallskip

\noindent\textbf{Acknowledgments.} We thank Umberto Zannier for bringing the problem to our attention, for many useful suggestions, and especially for pointing out the relevance of the equidistribution results of Katz-Sarnak, noting that they imply the case $q \gg_g 0$ of Theorem \ref{thm:OddCharacteristic}. In addition, the first author would like to thank Umberto Zannier for his guidance during his undergraduate studies, on a topic that ultimately inspired much of the work in this paper. We are grateful to J.~Kaczorowski and A.~Perelli for sharing their work \cite{PerelliKaczorowski} before publication. We thank Christophe Ritzenthaler and Elisa Lorenzo Garc\'ia for their interesting comments on the first version of this paper, Zhao Yu Ma for a comment about Remark \ref{rem:PPAV}, and the anonymous referees for their helpful suggestions.

\smallskip

\noindent\textbf{Funding.}
The second and third authors have been partially supported by MIUR grant PRIN 2017 ``Geometric, algebraic and analytic
methods in arithmetic" and MUR grant PRIN-2022HPSNCR (funded by the European Union project Next Generation EU), and by the University of Pisa through PRA 2018 and 2022 ``Spazi di moduli, rappresentazioni e strutture combinatorie".
The third author has received funding from the European Union’s Horizon 2020 research and 
innovation program under the Marie Skłodowska-Curie Grant Agreement No. 
101034413.

\subsection{Notation and classical results}
We fix our notation for symplectic groups:

\begin{definition} 
    Let $g \geq 1$ and let $R$ be a commutative ring with identity. Fix a non-degenerate
    alternating bilinear form on $R^{2g}$, represented by the matrix $\Omega$ (note that the form is non-degenerate if and only if $\det \Omega \in R^\times$). The group $\GSp_{2g}(R)$ is by definition
    \[
    \GSp_{2g}(R) = \{ M \in \GL_{2g}(R) : \exists \lambda \in R^\times \text{ such that } {}^t M \Omega M = \lambda \Omega \}.
    \]
    The multiplier of a matrix $M \in \GSp_{2g}(R)$ is the uniquely determined $\lambda \in R^\times$ such that ${}^t M \Omega M = \lambda \Omega$. We denote it by $\mult(M)$.
    Given $q \in R$, we further let $\GSp_{2g}^q(R)$ be the subset of $\GSp_{2g}(R)$ consisting of those matrices that have multiplier equal to $q$ (equality in the group $R^\times$).
\end{definition}

\begin{remark}
    We will mostly be interested in the cases $R=\Z/\ell^n\Z, \Z_\ell$ or $\Q_\ell$, where $\ell$ is prime.
    By definition, the group $\GSp_{2g}(R)$ depends on the choice of $\Omega$, but when $R$ is a local ring,
    different choices of $\Omega$ lead to isomorphic groups \cite{MR0153749}. It follows easily that the same is true for $R=\Z/N\Z$ for any integer $N \geq 2$. When $R \in \{\Z/\ell^n\Z, \Z_\ell, \Q_\ell, \Z/N\Z\}$, we will therefore refer to $\GSp_{2g}(R)$ without necessarily specifying the choice of anti-symmetric form.
\end{remark}
It will be useful to recall the well-known connection between the $L$-polynomial of a (smooth projective) curve $C$ of genus $g$ and the Galois representations attached to the Jacobian $J$ of $C$. Let $p$ be a prime, let $q$ be a power of $p$, and let $C$ be a curve of genus $g$ defined over $\F_q$. Denote by $J$ the Jacobian of $C$. Let $\ell$ be any prime different from $p$ and let $T_\ell J$ be the $\ell$-adic Tate module of $J$, that is,
\[
T_\ell J := \varprojlim_{n} J(\overline{\mathbb{F}_q})[\ell^n].
\]
There is a natural action of $\Gal(\overline{\mathbb{F}_q}/\mathbb{F}_q)$ on $T_\ell J$ (induced by the action of $\Gal(\overline{\mathbb{F}_q}/\mathbb{F}_q)$ on the torsion points of $J$), and it can be shown that $T_\ell J$ is a free $\mathbb{Z}_\ell$-module of rank $2g$. Fixing a $\mathbb{Z}_\ell$-basis of $T_\ell J$ we thus obtain a representation $\rho_{\ell^\infty}: \Gal(\overline{\mathbb{F}_q}/\mathbb{F}_q) \to \operatorname{GL}_{2g}(\mathbb{Z}_\ell)$ whose image is contained in $\operatorname{GSp}_{2g}(\mathbb{Z}_{\ell})$; the relevant antisymmetric bilinear form is given by the Weil pairing. Since $\Gal(\overline{\mathbb{F}_q}/\mathbb{F}_q)$ is procyclic, generated by the Frobenius automorphism $\Frob_q$, we are mostly interested in the action of $\Frob_q$ on $T_\ell J$, which is captured by its characteristic polynomial
\[
f_{C, \ell^\infty}(t) = \det(t \operatorname{Id} - \rho_{\ell^\infty}(\Frob_q)) \in \mathbb{Z}_\ell[t].
\]
The matrix representing the action of Frobenius is symplectic with multiplier $q$.
Notice that we also have an action of $\Gal(\overline{\mathbb{F}_q}/\mathbb{F}_q)$ on the $\ell$-torsion points of $J(\overline{\mathbb{F}_q})$, which form an $\mathbb{F}_\ell$-vector space of dimension $2g$; we can thus obtain a mod-$\ell$ representation $\rho_\ell : \Gal(\overline{\mathbb{F}_q}/\mathbb{F}_q) \to \GL_{2g}(\mathbb{F}_\ell)$ and a corresponding characteristic polynomial
$f_{C, \ell}(t) = \det(t \operatorname{Id} - \rho_{\ell}(\Frob_q)) \in \mathbb{F}_\ell[t]$. It is clear from the definitions that $f_{C, \ell}(t)$ is nothing but the reduction modulo $\ell$ of $f_{C, \ell^\infty}(t)$. We can now recall the connection between $P_{C}(t)$ and $f_{C,\ell^\infty}(t)$:

\begin{theorem}[Grothendieck–Lefschetz formula, \cite{MR463174}]\label{GF-formula}
The equality $P_{C}(t)=t^{2g} f_{C, \ell^\infty}(1/t)$ holds for every prime $\ell \neq p$. In particular, the polynomial $f_{C, \ell^\infty}(t)\in\Z_\ell[t]$ has integer coefficients and does not depend on $\ell$.
\end{theorem}

\section{The distribution of \texorpdfstring{$L$}{}-polynomials modulo an integer \texorpdfstring{$N$}{}}\label{sect: distribution of L-polynomials mod N}

In this section we adapt \cite[\S 10]{MR1659828} to the problem of the distribution of characteristic polynomials of Frobenius modulo a fixed integer $N \geq 2$ (as opposed to the distribution of the coefficients with respect to the Archimedean metric which is considered in \cite{MR1659828}).
Fix a genus $g \geq 2$ and a finite field $\mathbb{F}_q$ of characteristic $p>0$ (not dividing $N$). We denote by $\mathcal{M}_g$ the stack of smooth projective curves of genus $g$, so that $\mathcal{M}_g(\F_q)$ denotes the set of $\mathbb{F}_q$-isomorphism classes of smooth projective curves of genus $g$ over $\F_q$.
We see $\mathcal{M}_g(\F_q)$ as a probability space by endowing it with one of the following two natural measures:
\begin{itemize}
    \item the `naive' counting measure $\mnaive$, which assigns equal measure to every singleton $\{C\}$, and which we normalise by requiring $\mnaive(\mathcal{M}_g(\F_q))=1$.
    \item 
the `intrinsic' measure $\mintr$ such that 
\[
\mintr(\{C\}) = \alpha \frac{1}{\#\Aut(C_{\F_q})},
\]
where $\Aut(C_{\F_q})$ is the group of automorphisms of $C$ defined over $\F_q$ and 
\[
\alpha = \left(\sum_{C \in \mathcal{M}_g(\F_q)} \frac{1}{\# \Aut(C_{\F_q})}\right)^{-1}
\]
is the uniquely determined normalisation constant that ensures \[\sum_{C \in \mathcal{M}_g(\F_q)} \mintr(\{C\})=\mintr(\mathcal{M}_g(\F_q))=1.\]
Note that $\alpha$ is simply the inverse of the (groupoid) cardinality of $\mathcal{M}_g(\mathbb{F}_q)$. In other words, it is the inverse of the number of points of the moduli space of curves of genus $g$ over $\mathbb{F}_q$, when these are counted with the correct weight (given by the inverse of the size of their automorphism group).
\end{itemize}
Our objective in this section is to study the random variable
\[
\begin{array}{cccc}
\charpol : &\mathcal{M}_g(\F_q) & \to & \mathbb{Z}[t] \\
& C & \mapsto & f_{C, \ell^\infty}(t),
\end{array}
\]
where $\ell$ is any auxiliary prime different from $p$ that we use to compute the characteristic polynomial of the Frobenius acting on $\Jac(C)$. More precisely, we will consider the (infinitely many) random variables
\[
\begin{array}{cccc}
\charpol_N : &\mathcal{M}_g(\F_q) & \to & \mathbb{Z}/N\mathbb{Z}[t] \\
& C & \mapsto & f_{C, \ell^\infty}(t) \bmod N
\end{array}
\]
obtained from $\charpol$ by reducing the characteristic polynomials modulo $N$, for all $N \not \equiv 0 \pmod p$. For simplicity, since $\charpol(C)$ is always a monic polynomial of degree $2g$, we restrict the codomain to be the finite set $\mathbb{Z}/N\mathbb{Z}[t]_{\leq 2g}$, the additive group of polynomials with coefficients in $\Z/N\Z$ and degree at most $2g$.
For each positive integer $N$ not divisible by $p$ we obtain a measure $\mu_N^q$ on $\mathbb{Z}/N\mathbb{Z}[t]_{\leq 2g}$ as follows. Consider the finite set $\GSp_{2g}^q(\Z/N\Z)$ and its natural counting measure $\mu_{\GSp_{2g}^q(\Z/N\Z)}$, normalised so that the total mass is $1$. {Concretely, this is given by
\[
\mu_{\GSp_{2g}^q(\Z/N\Z)}(X) = \frac{\#X}{\#\GSp_{2g}^q(\Z/N\Z)} \quad\quad \forall X \subseteq \GSp_{2g}^q(\Z/N\Z).
\]} The map
\[
\charpol : \GSp_{2g}^q(\Z/N\Z) \to \Z/N\Z[t]_{\leq 2g}
\]
that sends each matrix in $\GSp_{2g}^q(\Z/N\Z)$ to its characteristic polynomial allows us to define the measure 
\[
\mu_N^q \colonequals (\charpol)_* \mu_{\GSp_{2g}^q(\Z/N\Z)}.
\]
We will show:
\begin{theorem}\label{thm: equidistribution of charpolys}
    Let $N, g$ be positive integers with $g \geq 2$. With the notation above, as $q \to \infty$ along prime powers with $(q, N)=1$, the measures $(\charpol_N)_* \mnaive-\mu_N^q$ and $(\charpol_N)_* \mintr-\mu_N^q$ converge weakly to $0$.
\end{theorem}

\begin{remark}
    For $g=1$, very precise results about the distribution of characteristic polynomials modulo arbitrary integers $N$ are proven in \cite{MR3017927}. In particular, the results of that paper describe a very explicit measure $\tilde{\mu}_N^q$ and show that for $g=1$ the difference $(\charpol_N)_* \mathbb{P}_{1,g}^{\operatorname{naive}}-\tilde{\mu}_N^q$ converges to zero with an error of size at most $O_N(q^{-1/2})$. Thus, the case $g=1$ is very well understood. For this reason, and since Theorem \ref{thm: representability and universal family} below does not apply in genus $1$, we exclude the case $g=1$ from our discussion.
\end{remark}

We begin by recalling a version of Deligne's equidistribution theorem, as extended by Katz and Katz-Sarnak. We partially follow the presentation in \cite[\S 2]{MR2142226}.
We fix an integer $N \geq 2$ and a geometrically
connected, smooth, finite-type $\mathbb{Z}[1/N]$-scheme $U$ whose fibres are all geometrically connected of the same dimension. Denote by $\eta$ the generic point of $U$ and by $\overline{\eta}$ a corresponding
geometric generic point. Let $\mathcal{F}$ be a local system of symplectic free $\Z/N\Z$-modules of rank $2g$ on $U$ -- equivalently, a representation
\[
\rho_{\mathcal{F}} : \pi_1(U, \overline{\eta}) \to \operatorname{GSp}_{2g}(\Z/N\Z) \cong \operatorname{GSp}(\mathcal{F}_{\overline{\eta}}) \subset \Aut(\mathcal{F}_{\overline{\eta}}).
\]
Given a finite field $k$ of characteristic not dividing $N$, there is a unique map $\Spec k \to \Spec \Z[1/N]$. As in the introduction, a classical construction associates with every $u \in U(k)$ a (conjugacy class of) Frobenius $\operatorname{Frob}_{u, k} \in \pi_1(U, \overline{\eta})$.

\begin{theorem}
\label{thm: Katz equidistribution} 
In the situation above, suppose that the following holds.
    For every finite field $k$ (of characteristic not dividing $N$) and for the unique map $\Spec k \to \Spec \Z[1/N]$, denote by $\overline{\eta}_k$ a geometric generic point of $U_{k}$ and write $\pi_1^{\operatorname{geom}}(U_k,\overline{\eta}_k) = \pi_1\left( U_{\overline{k}}, \overline{\eta}_k \right)$.
    The representation $\rho_{\mathcal{F}}$ fits in a commutative diagram
\begin{equation}\label{eq: commutative diagram Katz-Deligne}
    \xymatrix{
1 \ar[r]  & \pi_1^{\operatorname{geom}}(U_k, \overline{\eta}_k) \ar[r] \ar[d]^{\rho_\mathcal{F}^{\operatorname{geom}}}  & \pi_1(U_k,\overline{\eta}_k) \ar[r]  \ar[d]^{\rho_\mathcal{F}} & \Gal(\overline{k}/k) \ar[r] \ar[d]^{\rho_\mathcal{F}^{k}}
& 1 \\
1 \ar[r] & \Sp_{2g}(\Z/N\Z) \ar[r]  & \GSp_{2g}(\Z/N\Z) \ar[r]_{\mult} & \mathbb{G}_m(\Z/N\Z) \ar[r] & 1
}
\end{equation}
where $\rho_\mathcal{F}^{\operatorname{geom}}$ is surjective and $\rho_{\mathcal{F}}^k$ sends the canonical generator $\operatorname{Frob}_k$ of $\operatorname{Gal}\left( \overline{k}/ k \right)$ to $\#k$. Suppose furthermore that the restriction of $\rho_{\mathcal{F}}$ to $\pi_1(U_{\overline{\Q}}, \overline{\eta})$ has image in $\Sp_{2g}(\Z/N\Z)$.

There
is a constant $C$ (depending at most on $U$, $\mathcal{F}$ and $N$) such that, for any union of conjugacy classes
$W\subseteq \GSp_{2g}(\Z/N\Z)$ and any finite field $k$ of characteristic not dividing $N$, we have
\[
\abs{
\frac{
\# \left\{ u \in U(k) : \rho_\mathcal{F}(\Frob_{u,k}) \in W \right\} }
{ \#U(k)}
-
\frac{ \#(W\cap \GSp_{2g}^{\gamma(k)}(\Z/N\Z))}{\#\Sp_{2g}(\Z/N\Z)} }
\le
\frac C{\sqrt{\#k}},
\]
where $\gamma(k)=\# k$ is the image of the canonical generator of
$\Gal(\overline{k}/k)$ under $\rho_{\mathcal{F}}^{k}$.
\end{theorem}
The deduction of this result from the work of Katz-Sarnak \cite{MR1659828} is certainly well known to experts, but it is difficult to find details in print: see for example \cite[Principle 2]{MR2946086}, where a similar result is labelled Principle `because no complete proof of this statement has appeared
in the literature to date'. We thus prefer to provide a short proof.

\begin{proof}
This is a special case of \cite[Theorem 9.7.13]{MR1659828}. More precisely, we fix an auxiliary prime $\ell$ dividing $N$ and a faithful $\overline{\Q_\ell}$-representation $\Lambda : \GSp_{2g}(\Z/N\Z) \to \operatorname{GL}(V)$ for some $\overline{\Q_\ell}$-vector space $V$, and apply \cite[Theorem 9.7.13]{MR1659828} to the $\ell$-adic sheaf $\mathcal{F}'$ corresponding to the representation $\rho \colonequals \Lambda \circ \rho_{\mathcal{F}}$. In the notation of \cite[\S 9.7.1]{MR1659828} we further take $S=\Spec \Z[1/N]$ and $X=U$.

We check that these data satisfy assumptions (1)-(4) of \cite[\S 9.7.2]{MR1659828}; set $G_{\operatorname{arith}} = \Lambda(\GSp_{2g}(\Z/N\Z)) \cong \operatorname{GSp}_{2g}(\Z/N\Z)$ and $G = \Lambda(\Sp_{2g}(\Z/N\Z)) \cong \operatorname{Sp}_{2g}(\Z/N\Z)$ (we identify these finite groups with constant algebraic subgroups of $\operatorname{GL}(V)$).
\begin{enumerate}
    \item The fact that $\rho(\pi_1(U, \overline{\eta})) \subset G_{\operatorname{arith}}(\overline{\Q_\ell})$ is true by definition. The Zariski density of $\rho(\pi_1(U, \overline{\eta}))$ in $G_{\operatorname{arith}}(\overline{\Q_\ell})$ is equivalent to the fact that $\Lambda \circ \rho_{\mathcal{F}}$ surjects onto $G_{\operatorname{arith}}$, or equivalently, that $\rho_{\mathcal{F}}$ surjects onto $\GSp_{2g}(\Z/N\Z)$. The image of $\rho_{\mathcal{F}}$ contains the image of $\rho_{\mathcal{F}}^{\operatorname{geom}}$ (for any finite field $k$ of characteristic prime to $N$), which is $\Sp_{2g}(\Z/N\Z)$ by assumption. On the other hand, by the commutative diagram in the statement, the image of $\operatorname{mult} \circ \rho_{\mathcal{F}}$ contains $\rho_{\mathcal{F}}^k(\operatorname{Frob}_k)=\#k$ for any finite field $k$ of characteristic prime to $N$. By Dirichlet's theorem, the quantity $\#k$ realises all invertible classes modulo $N$, hence the image of $\mult \circ \rho_{\mathcal{F}}$ contains all of $(\Z/N\Z)^\times$. Taken together, these facts imply that the image of $\rho_{\mathcal{F}}$ is $\GSp_{2g}(\Z/N\Z)$.
    \item The inclusion $\rho(\pi(U_{\overline{\Q}}, \overline{\eta})) \subseteq \Lambda(\Sp_{2g}(\Z/N\Z))$ is true by assumption.
    \item We have to check that for every finite field $k$ and every $k$-valued point $s$ of $\Z[1/N]$, the geometric monodromy group of $\mathcal{F}|_{U_s}$ is $\Lambda(\Sp_{2g}(\Z/N\Z))$. This is precisely the assumption that $\rho_{\mathcal{F}}^{\operatorname{geom}}$ is surjective for every finite field $k$.
    \item The image of $\Lambda$ is finite. Thus, all eigenvalues of any matrix in its image are roots of unity. This implies that $\mathcal{F}$ is $\iota$-pure of weight $0$, for any embedding $\iota$ of $\overline{\Q_\ell}$ into $\mathbb{C}$. See also the proof of \cite[Theorem 4.1]{MR1440067}.
\end{enumerate}
 
Since $G_{\operatorname{arith}}$ is a finite group (which implies that $K_{\operatorname{arith}}=G_{\operatorname{arith}}$ is finite, in the notation of \cite[Theorem 9.7.13]{MR1659828}, see \cite[Remark 9.7.11]{MR1659828}), the conclusion follows from  \cite[Theorem 9.7.13]{MR1659828}. Note that here we also use the obvious fact that $\#\GSp_{2g}^q(\Z/N\Z) = \Sp_{2g}(\Z/N\Z)$ for any $q$ prime to $N$.

\end{proof}

\def\cgn{{_N\mathcal{C}_g}}
\def\mgn{{_N\mathcal{M}_g}}
\def\cgln{{_{\ell N}\mathcal{C}_g}}
\def\mgln{{_{\ell N}\mathcal{M}_g}}

Let $\mathcal{C} \xrightarrow{\pi} U \to \Spec \Z[1/N]$ be a smooth, irreducible family of
projective curves of genus $g \ge 1$, {with the property that the map $U \to \Spec \Z[1/N]$ has} geometrically irreducible fibres, all of the same dimension. The étale sheaf $\mathcal{F} = \mathcal{F}_{\mathcal{C},N} := \Jac(\mathcal{C})[N]$ is a sheaf of $\Z/N\Z$-free symplectic modules of rank $2g$ whose fibre at a geometric point $\overline{x}\in
U$ is the $N$-torsion of the Jacobian $\Jac(\mathcal{C}_x)[N]$. Theorem \ref{thm: Katz equidistribution} applies to this situation provided that $\rho_{\mathcal{F}}^{\operatorname{geom}}$ is surjective for every finite field $k$ of characteristic not dividing $N$. The existence of a commutative diagram as in \eqref{eq: commutative diagram Katz-Deligne} is automatic thanks to well-known properties of the Weil pairing. The assumption $\rho_{\mathcal{F}}(\pi_1(U_{\overline{\Q}}, \overline{\eta})) \subseteq \Sp(\mathcal{F}_{\overline{\eta}})$ is also automatically satisfied, again by the properties of the Weil pairing. 
We will say that the family of curves $\mathcal{C} \to U$ has full $N$-monodromy if the
associated representation $\rho_\mathcal{F}: \pi_1^{\operatorname{geom}}(U_k, \overline{\eta}_k)
\to \Sp_{2g}(\Z/N\Z)$ is surjective for every finite field $k$ of characteristic not dividing $N$.

For the proof of Theorem \ref{thm: equidistribution of charpolys} we will rely on the functor $\mathcal{M}_{g, 3K}$ of tri-canonically embedded curves. Referring the reader to \cite[\S 10.6]{MR1659828} and \cite{MR0262240} for more details, we recall that for a field $k$ one has 
\[
\mathcal{M}_{g, 3K}(k) = \left\{ (C/k, \alpha) :
\begin{array}{c} C/k \text{ is a smooth projective} \\ \text{curve of genus $g$} \\ \alpha \text{ is a basis of } H^0\left(C, (\Omega^1_{C/k})^{\otimes 3}\right) \end{array} \right\} / \text{$k$-isomorphism}.
\]
The functor $\mathcal{M}_{g, 3K}$ was extensively studied by Mumford \cite{MR0214602} and Deligne-Mumford \cite{MR0262240}. We will need the following results:
\begin{theorem}[{Deligne-Mumford \cite[\S 5]{MR0262240}, see also \cite[Theorem 10.6.10]{MR1659828}}]\label{thm: representability and universal family}
Let $g\geq 2$. The following hold:
    \begin{enumerate}
        \item The functor $\mathcal{M}_{g, 3K}$ is represented by a smooth $\Z$-scheme of relative dimension $3g-3+(5g-5)^2$, with geometrically connected fibres.
        \item $\mathcal{M}_{g, 3K}$ is a fine moduli space: there exists a universal curve $\mathcal{C}_{g, 3K} \to \mathcal{M}_{g, 3K}$.
    \end{enumerate}
\end{theorem}

There is an obvious forgetful functor
$
\mathcal{M}_{g, 3K} \to \mathcal{M}_{g},
$
which on field-valued points is given by
\[
\begin{array}{ccc}
\mathcal{M}_{g, 3K}(k) & \to & \mathcal{M}_{g}(k) \\
(C/k, \alpha) & \mapsto & C/k.
\end{array}
\]
This map is surjective for every field $k$, and, when $k$ is finite, the fibre over any $C/k \in \mathcal{M}_g(k)$ has cardinality $\displaystyle \frac{\#\GL_{5g-5}(k)}{\#\Aut(C/k)}$ \cite[Lemma 10.6.8]{MR1659828}. As an immediate consequence \cite[Lemma 10.7.8]{MR1659828}, the intrinsic measure $\mintr$ on $\mathcal{M}_{g}(\F_q)$ can be described as
\begin{equation}\label{eq: express measure on Mg(Fq) via Mg3K(Fq)}
\frac{1}{\#\mathcal{M}_{g, 3K}(\F_q)} \sum_{(C, \alpha) \in \mathcal{M}_{g, 3K}(\F_q)} \delta_C,
\end{equation}
where $\delta_C$ is the characteristic function of the singleton $\{C\}$.
By Theorem \ref{thm: representability and universal family} (2), we have that the sum $\sum_{(C, \alpha) \in \mathcal{M}_{g, 3K}(\F_q)} \delta_C$ can be replaced by
\begin{equation}\label{eq: sum over Mg3K}
\sum_{u \in \mathcal{M}_{g, 3K}(\F_q)} \delta_{(\mathcal{C}_{g, 3K})_u},
\end{equation}
where $(\mathcal{C}_{g, 3K})_u$ is the fibre over $u \in \mathcal{M}_{g, 3K}(\F_q)$ of the universal curve $\mathcal{C}_{g, 3K} \to \mathcal{M}_{g, 3K}$. We will apply Theorem \ref{thm: Katz equidistribution} to $U=\left(\mathcal{M}_{g, 3K}\right)_{\Z[1/N]}$ and $\mathcal{F} = \mathcal{F}_{\mathcal{C}_{g, 3K}, N}$. For $g \geq 2$ and $p \nmid N$, this family has full $N$-monodromy by \cite[5.12]{MR0262240} (see also the discussion in \cite[\S 5]{MR3981312}).
We are almost ready to prove Theorem \ref{thm: equidistribution of charpolys}, but before doing so, we need a few estimates on the size of $\mathcal{M}_g(\F_q)$:

\begin{lemma}\label{lemma: mass of moduli space}
  For every $g \geq 3$, the following hold:
  \begin{enumerate}
  \item $\#\mathcal{M}_g(\F_q) = \sum_{C \in \mathcal{M}_g(\F_q)} 1 = q^{3g-3}(1+O_g(q^{-1/2}))$; 
  \item $\sum_{C \in \mathcal{M}_g(\F_q)} \frac{1}{\#\Aut(C_{\F_q})} = q^{3g-3}(1+O_g(q^{-1/2}))$;
  \item $\#\left\{ C \in \mathcal{M}_g(\F_q) : \#\Aut(C_{\overline{\F_q}}) \geq 2 \right\} = O_g(q^{3g-3-1})$.
  \end{enumerate}
  For $g=2$ one has
  \begin{enumerate}
  \item[1'.] $\#\mathcal{M}_2(\F_q) = \sum_{C \in \mathcal{M}_2(\F_q)} 1 = q^{3}(1+O(q^{-1/2}))$; 
  \item[2'.] $\sum_{C \in \mathcal{M}_2(\F_q)} \frac{1}{\#\Aut(C_{\F_q})} = \frac{1}{2}q^{3}(1+O(q^{-1/2}))$;
  \item[3'.] $\#\left\{ C \in \mathcal{M}_g(\F_q) : \#\Aut(C_{\overline{\F_q}}) > 2 \right\} = O(q^{2})$.
  \end{enumerate}
\end{lemma}
\begin{proof}
    For $g \geq 3$, all the statements follow from \cite[Lemmas 10.6.23, 10.6.25 and 10.6.26]{MR1659828}, together with the obvious asymptotic relation $\#\GL_{5g-5}(\F_q) \sim q^{(5g-5)^2}(1+O_g(q^{-1}))$. For $g=2$, one can adapt the proof of the same lemmas in \cite{MR1659828}, simply taking into account that the open subset $U_{\leq 2}$ of $\mathcal{M}_{2}$ parametrising curves whose geometric automorphism group has order 2 meets every geometric fibre of $\mathcal{M}_{2, 3K}/\mathbb{Z}$ \cite[Lemma 10.6.13, Remark 10.6.20]{MR1659828}. In particular, the generic value of $\#\Aut(C_{\F_q})$ for (smooth projective) curves of genus $2$ is $2$. Note that when the group $\Aut(C_{\F_q})$ has order 2 it is generated by the hyperelliptic involution.
\end{proof}

\begin{corollary}\label{cor: total variation distance between naive and intrinsic measure}
    For all $g \geq 2$ we have
        $\displaystyle 
        \sum_{C' \in \mathcal{M}_g(\F_q)} \left| \mnaive(\{C'\}) - \mintr(\{C'\}) \right| = O_g(q^{-1/2}).
        $
    \end{corollary}
\begin{proof}
    For $g \geq 3$, using the definition of $\mnaive$ and $\mintr$ and Lemma \ref{lemma: mass of moduli space} (1), (2) and (3) we obtain
            \[
        \begin{aligned}
            &\sum_{C' \in \mathcal{M}_g(\F_q)} \left| \mnaive(\{C'\}) - \mintr(\{C'\}) \right|\\ & = \sum_{C' \in \mathcal{M}_g(\F_q)} \left| \frac{1}{\#\mathcal{M}_g(\F_q)} - \frac{1/\#\Aut(C'_{\F_q})}{\sum_{C \in \mathcal{M}_g(\F_q)} 1/\#\Aut(C_{\F_q})} \right| \\
            & = \sum_{C' \in \mathcal{M}_g(\F_q)} \left| q^{3-3g}(1+O_g(q^{-1/2})) - \frac{q^{3-3g}(1+O_g(q^{-1/2}))}{\#\Aut(C'_{\F_q})}\right| \\
            & = \sum_{ \substack{C' \in \mathcal{M}_g(\F_q) \\ \#\Aut(C'_{\F_q})=1}} O_g(q^{3-3g-1/2})  + \sum_{\substack{C' \in \mathcal{M}_g(\F_q) \\ \#\Aut(C'_{\F_q}) \geq 2}} O_g(q^{3-3g}) \\
            & = O_g\left( \frac{\#\left\{ C \in \mathcal{M}_g(\F_q) : \#\Aut(C_{\overline{\F_q}}) \geq 2 \right\}}{q^{3g-3}} \right) +O_g\left( \frac{\#\mathcal{M}_g(\F_q)}{q^{3g-3}} q^{-1/2} \right) \\
            & = O_g(q^{-1}) + O_g(q^{-1/2}) = O_g(q^{-1/2}).
        \end{aligned}
        \]
        The same proof applies, with minimal changes, also to $g=2$, simply using (1'), (2'), and (3') of Lemma \ref{lemma: mass of moduli space} instead of (1), (2) and (3).
\end{proof}
\begin{proof}[Proof of Theorem \ref{thm: equidistribution of charpolys}]
By definition, the weak convergence in the statement means that -- for every continuous bounded function $f$ on $\Z/N\Z[x]_{\leq 2g}$ -- the integral of $f$ with respect to $(\charpol_N)_* \mnaive-\mu_N^q$ converges to $0$ as $q \to \infty$, and similarly for the sequence of measures $(\charpol_N)_* \mintr-\mu_N^q$.
We begin by treating the case of the measures $(\charpol_N)_* \mintr-\mu_N^q$. Since any function $f: \Z/N\Z[x]_{\leq 2g} \to \mathbb{R}$ is a linear combination of characteristic functions of singletons, it suffices to show the result when $f$ is of the form
\[
    f(h(t)) = \begin{cases}
        1, \text{ if }h(t) = h_0(t) \\
        0, \text{ otherwise}
    \end{cases}
\]
for some polynomial $h_0(t) \in \mathbb{Z}/N\Z[t]_{\leq 2g}$. Fix $h_0(t)$. The condition $\charpol(M) =h_0(t) \in \Z/N\Z[t]$ defines a (possibly empty) union of conjugacy classes $W_{h_0} \subseteq \GSp_{2g}(\Z/N\Z)$. For a curve $C/\F_q$, we denote by $\rho_{C, N}$ the natural representation of $\abGal{\F_q}$ on the $N$-torsion of $\Jac(C)$. We regard $\mathcal{M}_{g, 3K}$ as a $\mathbb{Z}[1/N]$-scheme. It will play the role of the scheme $U$ of our general discussion of the Deligne-Katz-Sarnak equidistribution theorem. We take as curve $\mathcal{C} \to U$ the universal curve $\mathcal{C}_{g, 3K}$ over $\mathcal{M}_{g, 3K}$.

Recall that we introduced the sheaf $\mathcal{F} = \mathcal{F}_{\mathcal{C}_{g, 3K}, N}$ and that the universal family over $\mathcal{M}_{g, 3K}$ has full $N$-monodromy \cite[5.12]{MR0262240}. Given a curve $\mathcal{C}_u$ in the family $\mathcal{C}$, lying over an $\F_q$-rational point $u$ of $U = \mathcal{M}_{g, 3K}$, the definitions imply that $\rho_{\mathcal{C}_u, N}(\Frob_q)$ and $\rho_{\mathcal{F}}(\operatorname{Frob}_{u, \mathbb{F}_q})$ represent the same conjugacy class.

For any fixed $q$, using Equations \eqref{eq: express measure on Mg(Fq) via Mg3K(Fq)} and \eqref{eq: sum over Mg3K} we have
    \begin{equation}
    \begin{aligned}
    &\int_{\mathcal{M}_g(\F_q)}  f(\charpol(C) \bmod N) \, d \mintr(C)
    \\ & = \frac{1}{\#\mathcal{M}_{g, 3K}(\F_q)} \sum_{(C, \alpha) \in \mathcal{M}_{g, 3K}(\F_q)} f(\charpol_N(C)) \\
    & = \frac{1}{\#\mathcal{M}_{g, 3K}(\F_q)} \sum_{u \in \mathcal{M}_{g, 3K}(\F_q)} \mathbf{1}_{{\rho_{\mathcal{C}_u, N}}({\operatorname{Frob}_q}) \in W_{h_0}} \\
    &{ = \frac{1}{\#\mathcal{M}_{g, 3K}(\F_q)} \sum_{u \in \mathcal{M}_{g, 3K}(\F_q)} \mathbf{1}_{\rho_{\mathcal{F}}(\Frob_{u, \F_q}) \in W_{h_0}}} \\
    & = \frac{\# \{u \in \mathcal{M}_{g, 3K}(\F_q) : { \rho_{\mathcal{F}}}(\Frob_{u,{ \mathbb{F}_q}}) \in W_{h_0} \}}{\#\mathcal{M}_{g, 3K}(\F_q)}.
    \end{aligned}
    \end{equation}
We now apply Theorem \ref{thm: Katz equidistribution} to rewrite the above as
    \begin{equation}\label{eq: sum of values of f(charpoly)}
    \int_{\mathcal{M}_g(\F_q)}  f(\charpol_N(C) ) \, d \mintr(C) = \frac{\#\left(W_{h_0} \cap \GSp_{2g}^q(\Z/N\Z) \right)}{\#\Sp_{2g}(\Z/N\Z)} + O_{g,  N}(q^{-1/2})
    \end{equation}
On the other hand, by definition we have
    \begin{equation}\label{eq: integral of f}
    \begin{aligned}
    \int_{\Z/N\Z[t]_{\leq 2g}} f(h(t)) \, d\mu_N^q(h) & = \int_{\GSp^q_{2g}(\Z/N\Z)} f(\charpol(M)) \, d\mu_{\GSp^q_{2g}(\Z/N\Z)}(M) \\
    & = \int_{\GSp^q_{2g}(\Z/N\Z)} \mathbf{1}_{\charpol(M) = h_0} \, d\mu_{\GSp^q_{2g}(\Z/N\Z)}(M) \\
    & = \frac{\#(W_{h_0} \cap \GSp^q_{2g}(\Z/N\Z))}{\#\GSp^q_{2g}(\Z/N\Z)} \\ 
    & = \frac{\#(W_{h_0} \cap \GSp^q_{2g}(\Z/N\Z))}{\#\Sp_{2g}(\Z/N\Z)}.
    \end{aligned}
    \end{equation}
    The claim follows upon comparing Equations \eqref{eq: sum of values of f(charpoly)} and \eqref{eq: integral of f}.
We now show that $(\charpol_N)_* \mnaive - \mu_N^q$ converges weakly to $0$. We have already established that $(\charpol_N)_* \mintr - \mu_N^q$ weakly converges to $0$. Thus, it suffices to show that $(\charpol_N)_* (\mintr - \mnaive)$ converges weakly to $0$, which in turn is implied by the following statement: for every $\varepsilon>0$ there exists $q_0$ such that, for all $q>q_0$ and all subsets $A$ of $\mathcal{M}_g(\F_q)$, one has $|\mintr(A) - \mnaive(A)| < \varepsilon$. This follows immediately from Corollary \ref{cor: total variation distance between naive and intrinsic measure}, because
\[
\begin{aligned}
|\mintr(A) - \mnaive(A)| & = \left| \sum_{C' \in A} \left( \mintr(\{C'\})-\mnaive(\{C'\}) \right) \right| \\
& \leq \sum_{C' \in A} |\mintr(\{C'\})-\mnaive(\{C'\})| = O_g(q^{-1/2}).
\end{aligned}
\]
\end{proof}

\begin{remark}\label{rmk: measures only depend on q mod N}
Note that the measure $\mu_N^q$ only depends on $q \bmod N$. In particular, if we take a sequence of prime powers $q_i$ such that $q_i \bmod N$ is constant (say equal to $r \bmod N$), Theorem \ref{thm: equidistribution of charpolys} shows that the measures $(\charpol_N)_* \mathbb{P}_{q_i, g}^{\operatorname{intr}}$ converge weakly to $\mu_N^{r}$. As a special case, taking $N=2$, this applies to any choice of $q_i$ that are not powers of $2$.    
\end{remark}

\begin{remark}\label{rmk: equidistribution modulo 2}
    Continuing from Remark \ref{rmk: measures only depend on q mod N}, we take $N=2$, let $q_i$ be the sequence of all odd primes, and apply the weak convergence of measures to the function $f=\mathbf{1}_{\Tr \equiv 0 \pmod{2}}$, where \[\Tr(x^{2g}-a_{2g-1}x^{2g-1}+\cdots+a_0)=a_{2g-1}.\] In this way, if $C$ is a curve over $\F_q$, 
    \[
    f(\charpol_2(C))=
    \begin{cases}
        1, \text{ if } \Tr(C) := q+1-\#C(\F_q) \equiv 0 \pmod{2} \\
        0, \text{ otherwise.}
    \end{cases}
    \]
    Note that this means $f(\charpol_2(C))=1$ if and only if $\#C(\F_q)$ is even.
    Applying Theorem \ref{thm: equidistribution of charpolys} to the case of the naive measure $\mnaive$ we obtain the convergence of
    \[
    \frac{1}{\#\mathcal{M}_g(\F_q)} \sum_{C \in \mathcal{M}_g(\F_q)} f(\charpol_2(C)) = \frac{\#\{C \in \mathcal{M}_g(\F_q) : \Tr(C) \equiv 0 \pmod{2}\}}{\#\mathcal{M}_g(\F_q)}
    \]
    to
    \begin{align*}
    &\mu^{1}_2\left( \{M \in \GSp_{2g}(\Z/2\Z) : \Tr(M) \equiv 0 \pmod{2} \} \right) = \frac{\#\{M \in \GSp_{2g}(\Z/2\Z) : \Tr(M) \equiv 0 \pmod{2} \}}{\#\GSp_{2g}(\Z/2\Z)}.
    \end{align*}
    Thus, we have proven
    \begin{align*}
    \lim_{q \to \infty} \frac{\#\{C \in \mathcal{M}_g(\F_q) : \Tr(C) \equiv 0 \pmod{2}\}}{\#\mathcal{M}_g(\F_q)} = \frac{\#\{M \in \GSp_{2g}(\Z/2\Z) : \Tr(M) \equiv 0 \pmod{2} \}}{\#\GSp_{2g}(\Z/2\Z)},
    \end{align*}
    where the limit is taken along the sequence of odd primes (or of their powers).
\end{remark}

\begin{remark}\label{rmk: equidistribution implies Kaczorowski-Perelli}
    Theorem \ref{thm: equidistribution of charpolys} implies Theorem \ref{thm:OddCharacteristic}, at least when the order $q$ of the finite field is sufficiently large compared to $g$. For simplicity, we only discuss the case of odd $q$. Using \cite{MR0258855}, or equivalently \cite[Theorem A.1]{MR2401624} (see also Proposition \ref{prop: Kirby-Rivin for general symplectic group} and Remark \ref{rmk: Rivin} below), one checks that the set of characteristic polynomials of matrices in $\GSp_{2g}(\F_2)$ is the $\F_2$-vector space of reciprocal polynomials (which has dimension $g+1$). Theorem \ref{thm: equidistribution of charpolys} with $N=2$ implies that, if $q \gg_g 1$, all characteristic polynomials of elements in $\GSp_{2g}(\F_2)$ are also the reduction modulo $2$ of the characteristic polynomial of Frobenius corresponding to some curve $C/\F_q$. This immediately implies that the $\Q$-vector space $L_g(\F_q)$ of Theorem \ref{thm:OddCharacteristic} has dimension at least $g+1$.
\end{remark}

\section{A conjecture on the distribution of \texorpdfstring{$\#C(\F_q)$}{}}\label{sec:conj}

In this section we describe a heuristic (motivated by the Lang-Trotter philosophy and by results of Gekeler \cite{gekeler2003frobenius} in genus 1) that gives precise predictions for the number of (smooth projective) curves over a finite field with a given number of rational points. We define the \textit{trace} of a curve $C/\F_q$ by the formula
\[
\Tr(C/\F_q) =\Tr(C) = q+1 - \#C(\F_q);
\]
by the Hasse-Weil bound, $\Tr(C)$ is an integer in the interval $[-2g\sqrt{q}, 2g\sqrt{q}]$.

We begin by recalling the definition of the Sato-Tate measure on the real interval $[-2g, 2g]$. Consider the complex Lie group $\operatorname{GSp}_{2g}(\mathbb{C})$ and let $\operatorname{USp}_{2g}$ be the maximal compact subgroup of $\operatorname{GSp}_{2g}(\mathbb{C})$ given by unitary symplectic matrices. The group $\operatorname{USp}_{2g}$, being compact, is canonically equipped with a unique Haar measure $\mu_{\operatorname{USp}_{2g}}$ normalised so that $\mu_{\operatorname{USp}_{2g}}(\operatorname{USp}_{2g})=1$.

The trace map $\operatorname{tr} : \operatorname{USp}_{2g} \to \mathbb{C}$ has image contained in the real interval $[-2g, 2g]$. We denote by $d\operatorname{ST}_g \colonequals \operatorname{tr}_* \mu_{\operatorname{USp}_{2g}}$ the push-forward of the Haar measure of $\operatorname{USp}_{2g}$ along the trace map, and we call it the \textit{Sato-Tate measure in dimension $g$}. It can be shown (for example using \cite[Lemma 8.5]{MR2920749}) that $d\operatorname{ST}_g$ is absolutely continuous with respect to the Lebesgue measure, so we also denote by $\operatorname{ST}_g : [-2g, 2g] \to \mathbb{R}$ the density function of $d\operatorname{ST}_g$. 

\begin{remark}
Explicit expressions for the function $\ST_2(x)$ can be found in \cite{MR3502944}, see in particular Theorem 5.2 of \textit{op.~cit.} We discuss the computation of $\operatorname{ST}_g(x)$ for general $g$ in Remark \ref{rmk: computation of ST_g}.
\end{remark}

Let $g\geq2$ and let $q=p^n$ be an odd prime power. We now introduce certain local factors, both at infinity and for each finite prime. We motivate the choice of these factors in Remarks \ref{rmk: motivations conjecture} and \ref{rmk: local factor at q}. First we need some notation: for an integer $t$ and a prime $\ell \neq p$, we define
\[
X_t^q(\Z_\ell) = \{ M \in \operatorname{GSp}_{2g}(\Z_\ell) : \operatorname{mult} M=q, \operatorname{tr} M =t \}.
\]
Similarly, for any prime $\ell$ (including $\ell=p$), we define
\[
\operatorname{GSp}_{2g, \Q_\ell}^q(\Q_\ell) = \{ M \in \operatorname{GSp}_{2g}(\Q_\ell) : \operatorname{mult} M=q \}
\]
and
\[
X_t^q(\Q_\ell) = \{ M \in \operatorname{GSp}_{2g}(\Q_\ell) : \operatorname{mult} M=q, \operatorname{tr} M =t \}.
\]
These notations are compatible with our later general definition of $\operatorname{GSp}_{2g, R}^q$ and $X_t^q$, see Notation \ref{notation: GSp2gm} and Definition \ref{def: X_t^m}. 
We are now ready to introduce our local factors. Given an integer $t$, we set
\[\nu_\infty(q, t) = \ST_g(t/\sqrt{q}).\]
For each prime $\ell \neq p$ we define
 	\begin{equation}\label{eq: nu ell q t}
  \nu_{\ell}(q, t) = \lim_{k \to \infty} \frac{ \# \operatorname{Im}\left( X_t^q(\Z_\ell) \to \operatorname{GSp}_{2g}(\Z/\ell^k\Z) \right)  }{\#\GSp_{2g}(\Z/\ell^k\Z) / (\ell^k\varphi(\ell^k))},
	\end{equation}
while for $\ell=p$ we set
\begin{equation}\label{eq: nu q q t}
    \nu_{p}(q, t) = \lim_{k \to \infty} \frac{\#\operatorname{Im}\left( X^q_t(\Q_p) \cap \operatorname{Mat}_{2g}(\Z_p) \to \operatorname{Mat}_{2g}(\Z/p^k\Z) \right)}{\#\operatorname{Im}\left( \operatorname{GSp}_{2g, \Q_p}^q(\Q_p) \cap \operatorname{Mat}_{2g}(\Z_p) \to \operatorname{Mat}_{2g}(\Z/p^k\Z) \right) / p^k}.
\end{equation}
In these formulas, $X_t^q(\Z_\ell) \to \operatorname{GSp}_{2g}(\Z/\ell^k\Z)$ and $\operatorname{Mat}_{2g}(\Z_p) \to \operatorname{Mat}_{2g}(\Z/p^k\Z)$ are the natural reduction maps modulo $\ell^k$ (or $p^k$), and $\operatorname{Im}$ denotes the image of a function.

\begin{remark}\label{rmk: nu ell q t is well-defined}
The limit in the definition of $\nu_{\ell}(q,t)$, including for $\ell=p$, exists thanks to \cite[Théorème 2]{MR0656627} (see also \cite[Equation (62), Page 348, Section 3]{MR0644559}). Indeed, the $\Q_\ell$-variety defined by $\{ \tilde{M} \in \GSp_{2g}(\Q_\ell) : \Tr(\tilde M)=t, \mult \tilde{M} = q \}$ has dimension $d := \dim \GSp_{2g, \Q_\ell}-2$, so by Oesterlé's theorem \cite[Théorème 2]{MR0656627} the numerators of \eqref{eq: nu ell q t} and \eqref{eq: nu q q t} are asymptotic to $c \ell^{dk}$ for some constant $c$. For a similar reason, the denominators also admit an asymptotic of the form $c' \ell^{dk}$ for some constant $c'$ (this is also easy to prove directly, at least for the case $\ell \neq p$). Therefore, the ratio converges when $k \to \infty$.
We justify the definition given in Equation \eqref{eq: nu q q t} in Remark \ref{rmk: local factor at q}.
\end{remark}

 We will work under the assumption that $q>4g^2-1$; see Remark \ref{rmk: q small wrt g} for a discussion of what happens when $q$ is small with respect to $g$.
Let 
\begin{equation}\label{eq: guess for the density}
\nu(q,t)=\nu_\infty(q, t)\prod_{\ell<\infty}\nu_{\ell}(q, t).
\end{equation}
Notice that $\nu_\infty(q, t)=0$ for $t\notin[-2g\sqrt{q}, 2g\sqrt{q}]$ and in particular $\nu(q,t)$ is non-zero for finitely many $t$ (for a fixed $q$).
The fact that the product \eqref{eq: guess for the density} converges for all $t$ is far from obvious. We will show this in Section \ref{sec:wellpos}.
Define \begin{equation}\label{eq:nu'}
    \nu'(q,t)=\frac{\nu(q,t)}{\sum_{t\in \Z}\nu(q,t)}.
\end{equation}
The denominator is non-zero, as we will show in Lemma \ref{lemma: nu q t is non-zero}.
By definition, we have
\[
\sum_{t\in \Z}\nu'(q,t)=1.
\]
\begin{definition}
	Let $g\geq2$, let $q$ be an odd prime power, and let $t$ be an integer. Denote by $H(q,t)$ the number of isomorphism classes of (smooth projective) curves of genus $g$ defined over $\F_q$ with trace $t$, that is, for which $q+1-\#C(\F_q)=t$.
 	Define 
 \begin{equation}\label{eq:H'}
    H'(q,t)=\frac{H(q,t)}{\sum_{t\in \Z}H(q,t)} = \frac{H(q,t)}{\#\mathcal{M}_g(\F_q)} =  \mnaive \left( \{ C \in \mathcal{M}_g(\F_q) : \Tr(C) = t\} \right).
 \end{equation}
	Thus, $H'(q,t)$ is the `naive probability' that a curve of genus $g$, defined over $\F_q$, has trace $t$.
\end{definition}
Notice that $H'(q, t)=0$ for $t\notin[-2g\sqrt{q}, 2g\sqrt{q}]$.
We conjecture that, for fixed $g$, as $q \to \infty$ the measures $\nu'(q,t)$ and $H'(q,t)$ converge to one another. To make this precise, we use the $L^1$-norm on the space of probability measures on $\mathbb{Z}$: since $\mathbb{Z}$ is countable, we define the $L^1$ distance $d(\mu_1, \mu_2)$ between two probability measures as
\[
d(\mu_1, \mu_2) \colonequals \sum_{t \in \Z}\abs{\mu_1(t) - \mu_2(t)}.
\]
By \cite[Proposition 4.2]{MR3726904}, the $L^1$ distance is equal up to a factor of $2$ to another natural distance on the space of probability measures, namely the total variation distance
\[
d^{\operatorname{tot.var.}}(\mu_1, \mu_2) = \sup_{A \subseteq \mathbb{Z}} \abs{\mu_1(A) - \mu_2(A)}.
\]
We can now formulate our conjecture: we phrase it in terms of $d$, but clearly we obtain an equivalent statement by replacing $d$ with $d^{\operatorname{tot.var.}}$

\begin{conjecture}\label{conj:main}
	Fix an integer $g\geq2$. As $q \to \infty$ along prime powers, we have
 \begin{equation}\label{eq:mainconj-new}
     d(H'(q, \cdot), \nu'(q, \cdot)) \to 0,
 \end{equation}
 where $H'(q,\cdot)$ and $\nu'(q, \cdot)$ are considered as probability measures on $\mathbb{Z}$.
\end{conjecture}

\begin{remark}\label{rmk: motivations conjecture}
    We now give our reasons for believing in Conjecture \ref{conj:main}. First of all, notice that by Corollary \ref{cor: total variation distance between naive and intrinsic measure} one may as well state Conjecture \ref{conj:main} using the intrinsic measure $\mintr$.
\begin{enumerate}
	\item For the case of elliptic curves and the intrinsic measure $\mintr$, the analogue of our conjecture has been proved in \cite[Theorem 5.5]{gekeler2003frobenius}, at least when $q$ is a prime number. In the proof, the author computes the value of $\nu'(q,t)$ (see \cite[Corollary 4.8]{gekeler2003frobenius}) and shows that it is \textit{equal} to $H'(q,t)$, which is computed in \cite{MR0005125}.

	\item Let $C$ be a curve of genus $g$ defined over $\F_q$. The trace $t$ of $C$ modulo $\ell^k$ is equal to the trace of the matrix $M\in \GSp_{2g}(\Z/\ell^k\Z)$ that represents the action of the Frobenius $\Frob_q$ on the $\ell^k$-torsion points of the Jacobian of $C$. Notice that there exists $\tilde{M} \in \GSp_{2g}(\Z_\ell)$ such that $\tilde{M} \equiv M \pmod{\ell^k}$ with $\operatorname{tr}(\tilde{M})=t$ and $\operatorname{mult}(\tilde{M})=q$: indeed, it suffices to take as $\tilde{M}$ the matrix representing the action of Frobenius on the full Tate module $T_\ell \Jac(C) \cong \Z_\ell^{2g}$. Hence, by Theorem \ref{thm: equidistribution of charpolys}, as $q \to \infty$ the probability that a curve $C$ has trace $t$ modulo $\ell^k$ converges to
	\[
	 \frac{ \# \left\{ M \in \GSp_{2g}(\Z/\ell^k\Z) : 
  \operatorname{tr}(M)=t, \operatorname{mult}(M)=q \right\} }{\#\GSp_{2g}(\Z/\ell^k\Z) / (\ell^k\varphi(\ell^k))}.
	\]
Taking the limit $k \to \infty$,
$\nu_\ell(q,t)$ should represent the probability that, given a random curve $C$, the Frobenius endomorphism acts on the $\ell^\infty$-torsion points of the Jacobian of the curve with trace $t$. (The numerator of $\nu_\ell(q,t)$ counts those matrices in $\operatorname{GSp}_{2g}(\Z/\ell^k\Z)$ with trace $t$ and multiplier $q$ which can be lifted to $X^q_t(\Z_\ell)$. See Remark \ref{rmk:lifttozl} for this condition, and Remark \ref{rmk: local factor at q} for the case $\ell=p$).

Our conjecture can then be seen as a minimalist one: we are essentially claiming that the distributions of the trace of Frobenius in $\Z_\ell$ for different primes $\ell$ are independent of each other (which we know is the case by Theorem \ref{thm: equidistribution of charpolys}, at least for $\ell \neq p$), and that (as $q \to \infty$) they also become independent of the distribution of the absolute value of $\Tr(\operatorname{Frob}) \in \mathbb{R}$. To put it in another way, Conjecture \ref{conj:main} is the simplest joint distribution that reproduces the correct (known) `marginal' distributions for $\Tr(C) \bmod N$ and for $\frac{|\Tr(C)|}{|\sqrt{q}|} \in [-2g,2g]$. 

\item The `minimalist' philosophy just outlined is, of course, the same that underlies the widely believed Lang-Trotter conjecture \cite[Part I, Section 3]{MR0568299}.

	\item 
Finally, numerical evidence points in the direction of the conjecture being true, see Section \ref{sec:numericalev}.

\end{enumerate}
\end{remark}

  Our conjecture should be contrasted with \cite[Conjecture 5.1]{bergström2023refinements}, which makes a different prediction for $H'(q,t)$.
The authors of \cite{bergström2023refinements} define (the analogue of our) $\nu(q,t)$ purely in terms of the Sato-Tate density $\nu_\infty$.
We believe that -- as happens for $g=1$ -- one should also take into account the measures $\nu_\ell$ for all finite $\ell$. In fact, even though we cannot prove Conjecture \ref{conj:main}, the results of Section \ref{sect: distribution of L-polynomials mod N} are enough to show that \cite[Conjecture 5.1]{bergström2023refinements} is not correct. 
The proof of this fact
is a bit technical: \cite[Conjecture 5.1]{bergström2023refinements} refers only to non-hyperelliptic curves and replaces $t/\sqrt{q}$ with the nearest integer, both of which introduce formal difficulties. However, the key idea is comparatively simple, so we isolate it in the next proposition, which shows that the measures $\nu_\infty$ and $H'$ are substantially different infinitely often. Intuitively, this contradicts \cite[Conjecture 5.1]{bergström2023refinements}. 
{A complete argument showing that \cite[Conjecture 5.1]{bergström2023refinements} does not hold is given in the preprint version of this paper \cite{ballini2024lpolynomialscurvesfinitefields}. In particular, in \cite[Appendix A]{ballini2024lpolynomialscurvesfinitefields}, we prove all the technical details necessary to show that an argument very similar to that of Proposition \ref{prop: conjecture does not hold, intuitive version} disproves \cite[Conjecture 5.1]{bergström2023refinements}. For the sake of brevity, and since that proof does not add much to the mathematical content of the paper, we decided to omit it here.}
The following proposition is stated for $g=3$, but we suspect it should hold for all $g\geq 3$.

\begin{proposition}\label{prop: conjecture does not hold, intuitive version}
    Let $g = 3$. There exists $\varepsilon>0$ such that for all odd prime powers $q$ bigger than a constant $q_0>0$ there exists $t \in [-2g\sqrt{q}, 2g\sqrt{q}] \cap \mathbb{Z}$ such that
	 \begin{equation*}
	 	\left| \sqrt{q}\mnaive( \Tr C/\F_q = t) - \ST_g(t/\sqrt{q}) \right| \geq \varepsilon.
	 \end{equation*}
\end{proposition}
\begin{proof}
We denote simply by $\mathbb{P}$ the naive probability measure $\mnaive$ on $\mathcal{M}_{g}(\F_q)$.
We assume that \[\forall \varepsilon>0 \,  \forall q_0>0 \,\exists q > q_0 \text{ odd prime power such that } \forall t \in [-2g\sqrt{q}, 2g\sqrt{q}] \cap \mathbb{Z}\] one has
	 \begin{equation}\label{eq:assumption}
	 	\left| \mathbb{P}( \Tr C/\F_q = t) - \frac{\ST_g(t/\sqrt{q})}{\sqrt{q}} \right| < \frac{\varepsilon}{\sqrt{q}}
	 \end{equation}
	 and aim for a contradiction. Fix $\varepsilon>0$ and let $p$ be an odd prime. Let $q=p^n$.
We have
	 	\[
	 	\begin{aligned}
	 		\mathbb{P}( \Tr (C/\F_q) & \equiv 0  \pmod 2 ) = \sum_{\substack{ t \in [-2g\sqrt{q}, 2g\sqrt{q}] \cap \mathbb{Z} \\ t \equiv 0 \pmod 2 }} \left( \mathbb{P}(\Tr (C/\F_q) = t) - \frac{\ST_g(t/\sqrt{q})}{\sqrt{q}} + \frac{\ST_g(t/\sqrt{q})}{\sqrt{q}} \right) \\
	 		& = \sum_{\substack{ t \in [-2g\sqrt{q}, 2g\sqrt{q}] \cap \mathbb{Z} \\ t \equiv 0 \pmod 2 }}  \frac{\ST_g(t/\sqrt{q})}{\sqrt{q}} + \sum_{\substack{ t \in [-2g\sqrt{q}, 2g\sqrt{q}] \cap \mathbb{Z} \\ t \equiv 0 \pmod 2 }}   \left( \mathbb{P}(\Tr C/\F_q = t) - \frac{\ST_g(t/\sqrt{q})}{\sqrt{q}} \right) \\
	 		& = \frac{1}{\sqrt{q}} \sum_{\substack{ t \in [-2g\sqrt{q}, 2g\sqrt{q}] \cap \mathbb{Z} \\ t \equiv 0 \pmod 2 }} \ST_g(t/\sqrt{q}) + E,
	 	\end{aligned}
	 	\]
	 	with \begin{equation}\label{eq:E}|E| \leq (4g+1)\sqrt{q} \cdot \frac{\varepsilon}{\sqrt{q}} \leq (4g+1)\varepsilon\end{equation} by \eqref{eq:assumption}. On the other hand, some basic analysis shows that (since $\ST_g$ is Riemann-integrable) \[\frac{1}{\sqrt{q}} \sum_{\substack{ t \in [-2g\sqrt{q}, 2g\sqrt{q}] \cap \mathbb{Z} \\ t \equiv 0 \pmod 2 }} \ST_g(t/\sqrt{q})\] converges, as $q=p^n$ goes to infinity, to 
   \[
   \frac{1}{2\sqrt{q}} \int_{-2g\sqrt{q}}^{2g\sqrt{q}} \ST_g(t/\sqrt{q}) dt =\frac{1}{2} \int_{-2g}^{2g} \ST_g(t) dt = \frac{1}{2}.
   \]
Therefore,
   \begin{equation}\label{eq:12}
    \abs{\mathbb{P}(\Tr (C/\F_q) \equiv 0 \pmod{2})-\frac 12}\leq \abs{E}+\varepsilon
   \end{equation}
for $q=p^n$ large enough.
	 	Let
	 \[\begin{aligned}
	 L_1(g)&\coloneqq \frac{\#\{ M \in \GSp_{2g}(\F_2) : \Tr M \equiv 0 \hspace{-0.2cm} \pmod 2, \mult M = q \equiv 1 \hspace{-0.2cm} \pmod 2 \}}{\#\GSp_{2g}(\F_2)}.
	 \end{aligned}\]
 Note that the condition $\mult M = q \equiv 1 \pmod{2}$ is actually automatic, since $1$ is the only invertible element in $\F_2$.
By Remark \ref{rmk: equidistribution modulo 2}, as $q \to \infty$ we have
   $
   \left|L_1(g) - \mathbb{P}(\Tr (C/\F_q) \equiv 0 \pmod{2}) \right| = o(1),
   $
and in particular, for $q$ large enough, we have 
   \begin{equation}\label{eq: L1(g) vs probability of even trace}
   \left|L_1(g) - \mathbb{P}(\Tr (C/\F_q) \equiv 0 \pmod{2}) \right| <  \varepsilon.  
   \end{equation}
   
We now prove that the initial claim does not hold for $g=3$. It seems likely that a similar strategy can be applied for every $g> 3$.
By direct computation, $L_1(3) = \frac{1436}{2835} \approx 0.5065\ldots$ is strictly greater than $1/2$. Fix $0< \varepsilon < \frac{|L_1(g)-1/2|}{8g}$ for $g=3$. For $q=p^n$ large enough, by Equations \eqref{eq:E}, \eqref{eq:12} and \eqref{eq: L1(g) vs probability of even trace} we get
	 	\[
   \begin{aligned}
	 	\abs{L_1(3) - \frac{1}{2}} & \leq \abs{L_1(3) - \mathbb{P}(\Tr (C/\F_q) \equiv 0 \pmod{2})} + \abs{\mathbb{P}(\Tr (C/\F_q) \equiv 0 \pmod{2}) - \frac{1}{2}} \\
   & \leq \abs{E}+2\varepsilon \leq (4g+3)\varepsilon < \abs{L_1(3)-\frac 12},
   \end{aligned}
	 	\]
	contradiction.
	 \end{proof} 
\subsection{Further remarks on Conjecture \ref{conj:main}}

In this section, we collect several other remarks on Conjecture \ref{conj:main} and the possible limits of its validity. As all the material in this section is speculative, we do not go into much detail, but we hope that this discussion will encourage others to investigate the issues raised here.

Since the statistics of the distribution of the trace of principally polarised abelian varieties (PPAV) of a fixed dimension $g$ over finite fields are the same as those of Jacobians (equivalently, of curves of genus $g$), it seems reasonable to extend Conjecture \ref{conj:main} to the family of all PPAVs of a fixed dimension. More precisely and more generally, we formulate the following conjecture, of which Conjecture \ref{conj:main} is a special case.
\begin{conjecture}\label{conj: for PPAVs}
    Let $U$ be a scheme of finite type over $\Z$ and let $\mathcal{A} \to U$ be a family of $g$-dimensional, principally polarised abelian varieties with full monodromy. For a prime power $q$, let 
    \[
    H'(q,t) = \frac{\#\{u \in U(\F_q) : q+1-t=\#\mathcal{A}_u(\F_q) \}}{\#U(\F_q)},
    \]
    seen as a measure on $\Z$. Let $\nu'$ be as in \eqref{eq:nu'}.
    As $q \to \infty$ along prime powers, we have $d(H'(q, \cdot), \nu'(q, \cdot)) \to 0$, where $H'(q, \cdot)$ and $\nu'(q, \cdot)$ are considered as probability measures on $\Z$.
\end{conjecture}

In particular, Gekeler's results \cite{gekeler2003frobenius} should perhaps be interpreted in this light. From this perspective, one should perhaps ask if Conjecture \ref{conj:main} could not be upgraded to an actual \textit{equality} for fixed $q$ (as opposed to an asymptotic statement for $q \to \infty$) when one considers the better-behaved family of all PPAVs. We will see that, while the measures $H'(q,t)$ and $\nu'(q,t)$
\textit{cannot} be equal in general, even for abelian varieties (Remark \ref{rmk: q small wrt g}), this point of view can still be helpful.

In this section we mostly focus on Conjecture \ref{conj:main}, but -- with minimal modifications -- similar comments also apply to Conjecture \ref{conj: for PPAVs}. Given the limited evidence we have in support of Conjecture \ref{conj:main}, it seems safer to restrict our discussion to the special case of the family of all curves (but we have no reason to expect a substantially different behaviour for any other family of abelian varieties with full monodromy).

\begin{remark}[Local factor at $p$]\label{rmk: local factor at q}
    We justify the choice of the local factor \eqref{eq: nu q q t}. Observe first that the more general formula
    \[
        \nu_{\ell}(q, t) = \lim_{k \to \infty} \frac{\#\operatorname{Im}\left( X^q_t(\Q_\ell) \cap \operatorname{Mat}_{2g}(\Z_\ell) \to \operatorname{Mat}_{2g}(\Z/\ell^k\Z) \right)}{\#\operatorname{Im}\left( \operatorname{GSp}_{2g, \Q_\ell}^q(\Q_\ell) \cap \operatorname{Mat}_{2g}(\Z_\ell) \to \operatorname{Mat}_{2g}(\Z/\ell^k\Z) \right) / \ell^k}
    \]
    reduces to \eqref{eq: nu ell q t} and \eqref{eq: nu q q t} respectively when $\ell \neq p$ and $\ell=p$. The denominator of this formula is essentially the average over $t \in \{0, \ldots, \ell^{k}-1\}$ of the numerator, so the ratio measures the deviation from the average of the number of symplectic matrices with a given trace. For $g=1$, Gekeler shows \cite{gekeler2003frobenius} that this formula does give the correct local factor at $p$. For $g>1$, at least when the field of definition is the prime field $\mathbb{F}_p$, one can consider the action of Frobenius on rigid (or crystalline) cohomology, which is a free $W(\mathbb{F}_p)=\mathbb{Z}_p$-module of rank $2g$: in this way, Frobenius acts symplectically on a $2g$-dimensional $\Q_p$-vector space (the cohomology group tensored with $\Q_p$) preserving a $\Z_p$-lattice, so it defines a matrix with entries in $\Z_p$ and multiplier $q$ (any such matrix does \textit{not} lie in $\GSp_{2g}(\Z_p)$, because the multiplier is not invertible in $\Z_p$ -- in fact, such a matrix does not even lie in $\GL_{2g}(\Z_p)$). Note that we cannot simply consider the Frobenius action on the Tate module $T_p$, because this has rank at most $g$, so it doesn't provide a good $p$-adic analogue of $T_\ell$ for $\ell \neq p$. It seems likely that an equidistribution result similar to Theorem \ref{thm: Katz equidistribution} should also hold in rigid cohomology (see \cite{MR4410030, hartl2020crystalline}), which would lead to the local factor \eqref{eq: nu q q t}, just like Theorem \ref{thm: Katz equidistribution} leads to \eqref{eq: nu ell q t}, see Remark \ref{rmk: motivations conjecture}.
\end{remark}

\begin{remark}[$q$ small with respect to $g$]\label{rmk: q small wrt g}
Notice that $\nu'(q, t)$ can be positive also for values of $t$ such that $q+1-t < 0$. Of course, this does not make sense, because $q+1-t$ should represent the number of $\F_q$-rational points of a curve. The point is that the support of $\nu'(q,t)$ is the full interval $[-2g\sqrt{q},2g\sqrt{q}]$, and when $q$ is small with respect to $g$ it may well happen that $q+1-2g\sqrt{q}< 0$. 

There are also subtler issues. The Sato-Tate distribution arises as the pushforward via the trace map of the Haar measure on $\operatorname{USp}_{2g}$. Suppose that $M \in \operatorname{USp}_{2g}$ corresponds to the unitarised Frobenius $\frac{\operatorname{Frob}_{C/\F_q}}{\sqrt{q}}$, where $C/\F_q$ is a smooth projective curve of genus $g$. Then, for every $m \geq 1$ one has
\[
\#C(\F_{q^m}) = q^m+1 - q^{m/2}\operatorname{tr}(M^m),
\]
and in particular, for all integers $m_1 \mid m_2$ we must have
\begin{align*}
\#C(\F_{q^{m_1}}) = q^{m_1}+1 - q^{m_1/2}\operatorname{tr}(M^{m_1}) \leq q^{m_2}+1 - q^{m_2/2}\operatorname{tr}(M^{m_2}) = \#C(\F_{q^{m_2}}).
\end{align*}
When $q$ is small with respect to $g$, there are matrices in $\operatorname{USp}_{2g}$ and integers $m_1 \mid m_2$ for which this inequality does not hold. In this regime, one should perhaps replace the usual Sato-Tate measure with the following. Let $X$ be the subset of $\operatorname{USp}_{2g}$ consisting of those matrices that satisfy all the inequalities
\[
0 \leq q^{m_1}+1 - q^{m_1/2}\operatorname{tr}(M^{m_1}) \leq q^{m_2}+1 - q^{m_2/2}\operatorname{tr}(M^{m_2}) = \#C(\F_{q^{m_2}})
\]
for all $m_1 \mid m_2$. A candidate to replace $\ST_g$ is the pushforward via the trace of the restriction of the Haar measure to the set $X$ (renormalised so as to have mass $1$).

Recall that we are fixing $g$ and sending $q$ to infinity, so this issue does not affect our Conjecture \ref{conj:main}.
\end{remark}
\begin{remark}[Asymmetry of the distribution $H'(q,t)$]
    An advantage of working with PPAVs rather than curves is that the former always admit quadratic twists, which implies that the distribution of their traces is always symmetric around $0$. This is further indication that perhaps Conjecture \ref{conj:main} is more natural for the family of PPAVs. In fact, we remark that while $\nu'(q, t)$ is symmetric (that is, $\nu'(q,-t)=\nu'(q,t)$), this is not necessarily the case for $H'(q,t)$ as soon as $g \geq 3$, as one can see for example in \cite[Figure 4]{bergström2023refinements}, or below in our own Figure \ref{fig:53}. See also \cite[\S 5]{bergström2023refinements} for a more extensive discussion of the asymmetry of $H'(q,t)$. In particular, we note again that one cannot have an exact equality $H'(q,t)=\nu'(q,t)$ for general $g$, because the right-hand side is easily seen to be symmetric. All the same, we expect the two measures to be arbitrarily close in the limit $q \to \infty$.
\end{remark}

\begin{remark}[Speed of convergence] The limit in Conjecture \ref{conj:main} cannot converge too quickly. We briefly show why.
Given a measure $\mu$ on $\Z$, let $(-1)^*\mu(\cdot)$ be the measure defined as $(-1)^*\mu(t)=\mu(-t)$ for all $t\in \Z$. By definition, $(-1)^*\nu'(q, \cdot)-\nu'(q, \cdot)=0$ since $\nu'(q,\cdot)$ is symmetric. In particular, the moments of $(-1)^*(\sqrt{q}\nu'(q, \cdot))-(\sqrt{q}\nu'(q, \cdot))$ are $0$ for all $q$. Assume that $d(H'(q, \cdot), \nu'(q, \cdot))$ converges to zero sufficiently quickly (for example, assume that the difference is $O(q^{-k-1})$ for some $k \geq 0$): the first $2k$ moments of $(-1)^*(\sqrt{q}H'(q, \cdot))-(\sqrt{q}H'(q, \cdot))$ then also converge to zero as $q$ goes to infinity. By \cite[Corollary 5.3]{bergström2023refinements}, the $n$-th moment of $(-1)^*(\sqrt{q}H'(q, \cdot))-(\sqrt{q}H'(q, \cdot))$ converges, for $n$ odd, to a real number $b_n$ and $b_n$ is non-zero for $n$ large enough (see \cite[Proposition 2.3]{bergström2023refinements}). Hence, for $n$ large enough, the $n$-th moment of $(-1)^*(\sqrt{q}H'(q, \cdot))-(\sqrt{q}H'(q, \cdot))$ does not tend to zero as $q$ goes to infinity. If $b_n \neq 0$ and $2k \geq n$, this is a contradiction. 

We thank Christophe Ritzenthaler and Elisa Lorenzo Garc\'ia for their comments that led to this remark.
\end{remark}

\begin{remark}[Jacobians among PPAVs]\label{rem:PPAV}
We again take the view that Conjecture \ref{conj:main} should be a shadow of a (possibly sharper) statement for the family of PPAVs of a given dimension. From this point of view, it is important to note that -- asymptotically -- 100\% of PPAVs of dimension 2 are Jacobians (those that are not are either products of PPAVs of lower dimension or Weil restrictions of elliptic curves). Thus, for $g =2$, the two conjectures that one can formulate (for curves of genus $2$ and principally polarised abelian surfaces) are equivalent. For $g=3$, 100\% of PPAVs are either Jacobians or quadratic twists of Jacobians (this is explained by the so-called \textit{Serre obstruction}, see e.g.~Serre's appendix to \cite{MR1795548}), so Conjectures \ref{conj:main} and \ref{conj: for PPAVs} for $g=3$ are still closely related.
As the dimension grows, Conjecture \ref{conj:main} can be interpreted as saying that Jacobians are `typical' among PPAVs -- the distribution of the trace on the subfamily of Jacobians is the same as the distribution among all PPAVs. While we believe that Conjecture \ref{conj:main} holds for all genera $g$, we should point out that it is very hard to get numerical evidence when the genus/dimension is $4$ or more. This is precisely the threshold above which the difference between PPAVs that are geometrically Jacobians and general PPAVs becomes (asymptotically) relevant, so it would be interesting to study this regime more closely. See Figure \ref{fig:PPAV} for an example in which we show the difference between taking into account only Jacobians or all PPAVs.
\end{remark}

\begin{remark}[Principally polarised abelian surfaces with trace zero]\label{rmk: PP abelian surfaces of trace 0}
In dimension two, PPAVs that are not Jacobians are either products of elliptic curves (with the product polarisation) or Weil restrictions of elliptic curves defined over a quadratic extension. In particular, over the finite field with $q$ elements, there are $\gg q^2$ abelian surfaces that are Weil restrictions of elliptic curves defined over $\mathbb{F}_{q^2}$, but not over $\mathbb{F}_q$. The Galois representation attached to $A := \operatorname{Res}_{\mathbb{F}_{q^2}/\mathbb{F}_q}(E)$ is the induction from $\Gal(\overline{\F_q} / \F_{q^2})$ to $\abGal{\F_q}$ of the representation attached to $E/\F_{q^2}$, which implies that the Frobenius trace of $A$ is zero for any such Weil restriction. Since the total number of genus-2 curves over $\mathbb{F}_q$ is of order $q^3$ (see Lemma \ref{lemma: mass of moduli space}), we expect that the proportion of PP abelian surfaces with trace $0$ should be significantly higher than the proportion of genus-2 curves with trace $0$ (both the number of genus-2 curves and the number of PP abelian surfaces is $\approx q^3$. The number of PP abelian surfaces with trace $0$ is $\gg q^2$ more than the corresponding number of curves. In particular, we expect the proportion of PP abelian surfaces of trace $0$ to be $\gg 1/q$ more than the corresponding proportion of curves). If we interpret Conjecture \ref{conj:main} as a prediction for the distribution of the number of points of PPAVs, this helps in explaining the peak at 0 in Figure \ref{fig:PPAV} (this peak is particularly noticeable since for $q=37$ the quantity $1/q$ is not at all negligible). Similar comments apply in higher dimensions, but the proportion of PPAVs having trace zero for geometric reasons becomes less significant as the dimension increases. 
\end{remark}

\begin{remark}[Lift to $\Z_\ell$]\label{rmk:lifttozl}
Equation \eqref{eq: nu ell q t} requires that we only count those matrices $M \in \GSp_{2g}(\Z/\ell^k\Z)$ with trace $t$ and multiplier $q$ that lift to a matrix $\tilde{M} \in X_t^q(\Z_\ell)$. While this condition is natural in our setting (since Frobenius is in fact represented by an $\ell$-adic matrix with the given trace and multiplier), we believe that omitting this condition should lead to the same result, that is, we conjecture that
    \[
    \tilde{\nu}_\ell(q,t) \colonequals \lim_{k \to \infty} \frac{ \# \left\{ M \in \GSp_{2g}(\Z/\ell^k\Z) : \operatorname{tr}(M)=t, \operatorname{mult}(M)=q \right\} }{\#\GSp_{2g}(\Z/\ell^k\Z) / (\ell^k\varphi(\ell^k))}
    \]
coincides with $\nu_\ell(q,t)$. It is not hard to check that this holds for $g=1$, but we have been unable to prove the result in general. The difficulties that arise lie in understanding the singularities of the variety $X_t^q$, 
that is, the $\Z_\ell$-subscheme of $\GSp_{2g, \Z_\ell}^{m}$ defined by the equation $\Tr(M)=t$. When $X_t^q$ is smooth over $\Z_\ell$, an application of Hensel's lemma shows that $\nu_\ell(q,t)$ and $\tilde{\nu}_\ell(q,t)$ both coincide with
        \[
\frac{ \# \left\{ M \in \GSp_{2g}(\F_\ell) : \operatorname{tr}(M)=t, \operatorname{mult}(M)=q \right\} }{\#\GSp_{2g}(\F_\ell) / (\ell\varphi(\ell))}.
    \]
Note that, without any information on the singularities of a variety $X/\Z_\ell$, it is very hard to control the point-counts $\#X(\Z/\ell^n\Z)$: for example, for the reduced variety defined by the equation $x^4=\ell y^4$ in the affine plane, we have $\dim X = 1$ and $\#X(\Z/\ell^n\Z) \gg \ell^{3/2n}$, with the point-count dominated by the singular points with $x \equiv y \equiv 0 \pmod{\ell^{n/4}}$. Without control on the singularities of $X$, it seems to us that no version of Hensel's lemma can be applied to understand the ratio $\#X(\Z/\ell^n\Z) / \ell^{n \dim X}$ as $n \to \infty$.
\end{remark}

\begin{remark}[Comparison to other recent work]\label{rmk: recent work}
The recent preprint \cite{shmakov2023cohomological} relates the moments
    \[
    M_n(g, q) = \mathbb{E}_{\mintr}[ \#A(\F_q)^n]
    \]
of the random variable `number of rational points of $A$' (here $A$ is drawn at random from $\mathcal{A}_g(\F_q)$ using a suitable intrinsic measure)
to the higher cohomology of certain moduli spaces, see \cite[p.~2]{shmakov2023cohomological}. This yields explicit formulas for these moments for small $g$ and $n$ \cite[Corollaries 4.3 and 5.4]{shmakov2023cohomological} and it would be interesting to compare these results with the predictions of Conjecture \ref{conj:main}. It may be possible to carry out this comparison by using the techniques of \cite{MR3582398, MR4595380}.

In particular, \cite[Theorem A]{MR4595380} comes near to proving Conjecture \ref{conj:main} in the context of principally polarised abelian varieties. However, we point out that to establish Conjecture \ref{conj:main} one would still need to overcome several obstacles: the formula of \cite[Theorem A]{MR4595380} only applies to certain isogeny classes of abelian varieties and involves Tamagawa numbers that would have to be averaged; even more substantially, it is not clear how one would isolate Jacobians among all abelian varieties. Finally, even though this is perhaps only a technical problem, the existence of the limits \eqref{eq: nu ell q t} and \eqref{eq: nu q q t} seems substantially easier to prove in the context of \cite[Theorem A]{MR4595380} than it is in the general case we consider here (essentially because in the setting of \cite[Theorem A]{MR4595380} the expression appearing under the limit sign in \eqref{eq: nu ell q t} is constant for $k \gg 0$, which is not necessarily true in our generality).
\end{remark}

\subsection{Numerical evidence}\label{sec:numericalev}

In this section we report on numerical experiments that seem to support Conjecture \ref{conj:main}. The data are computed using MAGMA \cite{MR1484478}. All the MAGMA scripts to verify our data are available online \cite{OurScripts}.

In the graphs below we plot the distribution $t \mapsto H'(q,t)$ for various values of $g$ and $q$. These distributions are obtained by directly counting all isomorphism classes of curves of the given genus over the given finite field (the data for $q=53, g=3$ is taken from \cite{MR3240800}).
In addition, on the same graphs, we also plot an approximation of the Sato-Tate density and of $\nu'(q,t)$. We briefly explain how we obtain these approximations, starting with a general technique to compute the Sato-Tate density in arbitrary dimension.
\begin{remark}[{Computation of $\operatorname{ST}_g(x)$ for arbitrary $g$}]\label{rmk: computation of ST_g}
    For general $g$, the density $\operatorname{ST}_g(x)$ can be calculated up to arbitrary precision by using a technique due to Kedlaya-Sutherland \cite{MR2555991} and Lachaud \cite{MR3502944}. One can first use \cite[Section 4.1]{MR2555991} to compute the \textit{moments} of $\operatorname{ST}_g$, that is,
\[
m_n = \int_{-2g}^{2g} x^n \, d\operatorname{ST}_g(x).
\]
Once the moments (or at least, sufficiently many moments) are known, we can recover $\ST_g(x)$ as follows.
Let $L_n(x)$ be the Legendre polynomials, which form a complete orthogonal basis of $L^2([-1, 1])$. By rescaling, the polynomials \[\tilde{L}_n(x) := \left(\int_{-2g}^{2g} L_n(x/2g)^2 \right)^{-1/2}L_n(x/2g)\] form an orthonormal basis of $L^2([-2g, 2g])$. From the explicit expression of $\tilde{L}_n(x) = \sum_{i=0}^n a_{n, i}x^i$ as a polynomial, one can easily compute \[c_n = \int_{-2g}^{2g} \tilde{L}_n(x) \, d\operatorname{ST}_g(x) = \sum_{i=0}^n a_{n, i}m_i.\] Finally, we have the convergent expansion in $L^2([-2g, 2g])$
\begin{equation}\label{eq:ST}
    \operatorname{ST}_g(x) = \sum_{n \geq 0} c_n \tilde{L}_n(x),
\end{equation}
which allows the computation of $\operatorname{ST}_g(x)$ to arbitrary precision. In our numerical experiments, we use this technique to approximate $\operatorname{ST}_3(x)$.
\end{remark}

In our numerical experiments, we approximate the Sato-Tate density with the value of the series in Equation \eqref{eq:ST} truncated at $n\leq 100$. For $\nu'(q,t)$, we approximate the value of $\nu(q,t)$ (see Equation \ref{eq: guess for the density}) by considering the product of $\nu_\ell(q,t)$ for $\ell\leq 100$ and $\ell=\infty$. To compute an approximation of $\nu_\ell(q,t)$ for $\ell$ prime, we compute the value of the expression appearing under the limit sign in Equation \ref{eq: nu ell q t} for $k=1$ or $2$. To compute an approximation of $\nu_\infty(q,t)$, we use our approximation of the Sato-Tate density.

Let \[H_{\operatorname{intr}}'(q,t)= \mintr \left( \{ C \in \mathcal{M}_g(\F_q) : \Tr(C) = t\} \right).\] We compute the value of $H_{\operatorname{intr}}'(q,t)$ by direct enumeration of all the curves of genus $g$ defined over $\F_q$.

Finally, below each graph we also give the distance $d$ between the measures $H' \colonequals H'_{\operatorname{intr}}(q, \cdot)$ and $\nu' \colonequals \nu'(q, \cdot)$, as well as the distance between $H'$ and the Sato-Tate measure. Our conjecture predicts that $d(H',\nu')$ should go to $0$ as $q$ goes to infinity. As a consequence of \cite[Conjecture 5.1]{bergström2023refinements}, $d(H',\nu_\infty)$ should go to $0$. We proved in Proposition \ref{prop: conjecture does not hold, intuitive version} that the conjecture does not hold.



\begin{figure}[H]
\begin{center}
\begin{tikzpicture}[xscale=1.1, yscale=1.1]
  \begin{axis} 
  [scaled ticks=false, ymin=0, ymax=0.02, ytick={0,0.005,0.010,0.015,0.02}, yticklabels={0,0.005,0.01,0.015,0.02}]
\addplot[only marks, red, mark size=0.3mm] coordinates {
( -127 , 0 )
( -126 , 0 )
( -125 , 0 )
( -124 , 0 )
( -123 , 0 )
( -122 , 0 )
( -121 , 0 )
( -120 , 0 )
( -119 , 0 )
( -118 , 0 )
( -117 , 0 )
( -116 , 0 )
( -115 , 0 )
( -114 , 0 )
( -113 , 0 )
( -112 , 0 )
( -111 , 0 )
( -110 , 0 )
( -109 , 0 )
( -108 , 0 )
( -107 , 0 )
( -106 , 0 )
( -105 , 0 )
( -104 , 0 )
( -103 , 0 )
( -102 , 0 )
( -101 , 0 )
( -100 , 0 )
( -99 , 0 )
( -98 ,0 )
( -97 , 0 )
( -96 , 0.000160457 )
( -95 , 0.000118009 )
( -94 , 0.000166654 )
( -93 , 0.000158623 )
( -92 , 0.000224349 )
( -91 , 0.000181983 )
( -90 , 0.000305394 )
( -89 , 0.000217623 )
( -88 , 0.000346303 )
( -87 , 0.000295240 )
( -86 , 0.000382650 )
( -85 , 0.000331610 )
( -84 , 0.000556022 )
( -83 , 0.000378994 )
( -82 , 0.000542396 )
( -81 , 0.000507806 )
( -80 , 0.000726802 )
( -79 , 0.000524937 )
( -78 , 0.000838315 )
( -77 , 0.000628860 )
( -76 , 0.000927977 )
( -75 , 0.000823377 )
( -74 , 0.000999626 )
( -73 , 0.000812042 )
( -72 , 0.00140780 )
( -71 , 0.000929097 )
( -70 , 0.00138358 )
( -69 , 0.00118197 )
( -68 , 0.00158383 )
( -67 , 0.00119421 )
( -66 , 0.00187836 )
( -65 , 0.00140812 )
( -64 , 0.00204927 )
( -63 , 0.00173911 )
( -62 , 0.00209869 )
( -61 , 0.00168391 )
( -60 , 0.00290623 )
( -59 , 0.00187240 )
( -58 , 0.00259246 )
( -57 , 0.00231823 )
( -56 , 0.00317847 )
( -55 , 0.00240081 )
( -54 , 0.00357928 )
( -53 , 0.00251179 )
( -52 , 0.00372453 )
( -51 , 0.00308685 )
( -50 , 0.00395339 )
( -49 , 0.00307551 )
( -48 , 0.00503816 )
( -47 , 0.00327781 )
( -46 , 0.00450637 )
( -45 , 0.00419748 )
( -44 , 0.00526819 )
( -43 , 0.00385696 )
( -42 , 0.00601219 )
( -41 , 0.00416929 )
( -40 , 0.00643218 )
( -39 , 0.00504029 )
( -38 , 0.00613423 )
( -37 , 0.00481820 )
( -36 , 0.00796154 )
( -35 , 0.00549076 )
( -34 , 0.00705419 )
( -33 , 0.00620941 )
( -32 , 0.00819059 )
( -31 , 0.00589658 )
( -30 , 0.00928617 )
( -29 , 0.00626989 )
( -28 , 0.00926764 )
( -27 , 0.00752109 )
( -26 , 0.00904147 )
( -25 , 0.00732344 )
( -24 , 0.0115267 )
( -23 , 0.00742592 )
( -22 , 0.0100858 )
( -21 , 0.00890391 )
( -20 , 0.0117309 )
( -19 , 0.00821902 )
( -18 , 0.0124915 )
( -17 , 0.00860599 )
( -16 , 0.0125989 )
( -15 , 0.0104174 )
( -14 , 0.0122899 )
( -13 , 0.00937515 )
( -12 , 0.0149335 )
( -11 , 0.00975925 )
( -10 , 0.0134726 )
( -9 , 0.0113276 )
( -8 , 0.0145038 )
( -7 , 0.0105092 )
( -6 , 0.0153348 )
( -5 , 0.0109904 )
( -4 , 0.0150248 )
( -3 , 0.0120045 )
( -2 , 0.0142432 )
( -1 , 0.0108885 )
( 0 , 0.0189306 )
( 1 , 0.0108885 )
( 2 , 0.0142432 )
( 3 , 0.0120045 )
( 4 , 0.0150248 )
( 5 , 0.0109904 )
( 6 , 0.0153348 )
( 7 , 0.0105092 )
( 8 , 0.0145038 )
( 9 , 0.0113276 )
( 10 , 0.0134726 )
( 11 , 0.00975925 )
( 12 , 0.0149335 )
( 13 , 0.00937515 )
( 14 , 0.0122899 )
( 15 , 0.0104174 )
( 16 , 0.0125989 )
( 17 , 0.00860599 )
( 18 , 0.0124915 )
( 19 , 0.00821902 )
( 20 , 0.0117309 )
( 21 , 0.00890391 )
( 22 , 0.0100858 )
( 23 , 0.00742592 )
( 24 , 0.0115267 )
( 25 , 0.00732344 )
( 26 , 0.00904147 )
( 27 , 0.00752109 )
( 28 , 0.00926764 )
( 29 , 0.00626989 )
( 30 , 0.00928617 )
( 31 , 0.00589658 )
( 32 , 0.00819059 )
( 33 , 0.00620941 )
( 34 , 0.00705419 )
( 35 , 0.00549076 )
( 36 , 0.00796154 )
( 37 , 0.00481820 )
( 38 , 0.00613423 )
( 39 , 0.00504029 )
( 40 , 0.00643218 )
( 41 , 0.00416929 )
( 42 , 0.00601219 )
( 43 , 0.00385696 )
( 44 , 0.00526819 )
( 45 , 0.00419748 )
( 46 , 0.00450637 )
( 47 , 0.00327781 )
( 48 , 0.00503816 )
( 49 , 0.00307551 )
( 50 , 0.00395339 )
( 51 , 0.00308685 )
( 52 , 0.00372453 )
( 53 , 0.00251179 )
( 54 , 0.00357928 )
( 55 , 0.00240081 )
( 56 , 0.00317847 )
( 57 , 0.00231823 )
( 58 , 0.00259246 )
( 59 , 0.00187240 )
( 60 , 0.00290623 )
( 61 , 0.00168391 )
( 62 , 0.00209869 )
( 63 , 0.00173911 )
( 64 , 0.00204927 )
( 65 , 0.00140812 )
( 66 , 0.00187836 )
( 67 , 0.00119421 )
( 68 , 0.00158383 )
( 69 , 0.00118197 )
( 70 , 0.00138358 )
( 71 , 0.000929097 )
( 72 , 0.00140780 )
( 73 , 0.000812042 )
( 74 , 0.000999626 )
( 75 , 0.000823377 )
( 76 , 0.000927977 )
( 77 , 0.000628860 )
( 78 , 0.000838315 )
( 79 , 0.000524937 )
( 80 , 0.000726802 )
( 81 , 0.000507806 )
( 82 , 0.000542396 )
( 83 , 0.000378994 )
( 84 , 0.000556022 )
( 85 , 0.000331610 )
( 86 , 0.000382650 )
( 87 , 0.000295240 )
( 88 , 0.000346303 )
( 89 , 0.000217623 )
( 90 , 0.000305394 )
( 91 , 0.000181983 )
( 92 , 0.000224349 )
( 93 , 0.000158623 )
( 94 , 0.000166654 )
( 95 , 0.000118009 )
( 96 , 0.000160457 )
( 97 , 0 )
( 98 , 0 )
( 99 , 0 )
( 100 , 0 )
( 101 , 0 )
( 102 , 0 )
( 103 , 0 )
( 104 , 0 )
( 105 , 0 )
( 106 , 0)
( 107 , 0 )
( 108 , 0 )
( 109 , 0 )
( 110 ,0 )
( 111 , 0 )
( 112 , 0 )
( 113 , 0 )
( 114 , 0 )
( 115 , 0 )
( 116 , 0 )
( 117 , 0 )
( 118 , 0 )
( 119 , 0 )
( 120 , 0 )
( 121 , 0 )
( 122 , 0 )
( 123 , 0 )
( 124 , 0 )
( 125 , 0 )
( 126 , 0 )
( 127 , 0 )

    };

        \addplot[only marks, mark=star, mark size=0.5mm] coordinates {
( -127 , 0 )
( -126 , 0 )
( -125 , 0 )
( -124 , 0 )
( -123 , 0 )
( -122 , 0 )
( -121 , 0 )
( -120 , 0 )
( -119 , 0 )
( -118 , 0 )
( -117 , 0 )
( -116 , 0 )
( -115 , 0 )
( -114 , 0 )
( -113 , 0 )
( -112 , 0 )
( -111 , 0 )
( -110 , 0 )
( -109 , 0 )
( -108 , 0 )
( -107 , 0 )
( -106 , 0 )
( -105 , 0 )
( -104 , 0 )
( -103 , 0 )
( -102 , 0 )
( -101 , 0 )
( -100 , 0 )
( -99 , 0 )
( -98 , 0.000103855 )
( -97 , 0 )
( -96 , 0.000159551 )
( -95 , 0.000118361 )
( -94 , 0.000167973 )
( -93 , 0.000160173 )
( -92 , 0.000226491 )
( -91 , 0.000182148 )
( -90 , 0.000303050 )
( -89 , 0.000218453 )
( -88 , 0.000344933 )
( -87 , 0.000297272 )
( -86 , 0.000382665 )
( -85 , 0.000332730 )
( -84 , 0.000566971 )
( -83 , 0.000379630 )
( -82 , 0.000543556 )
( -81 , 0.000503188 )
( -80 , 0.000718760 )
( -79 , 0.000525731 )
( -78 , 0.000842466 )
( -77 , 0.000629717 )
( -76 , 0.000938981 )
( -75 , 0.000826663 )
( -74 , 0.000999510 )
( -73 , 0.000812491 )
( -72 , 0.00138575 )
( -71 , 0.000929895 )
( -70 , 0.00138708 )
( -69 , 0.00118777 )
( -68 , 0.00160137 )
( -67 , 0.00119365 )
( -66 , 0.00188793 )
( -65 , 0.00140858 )
( -64 , 0.00202495 )
( -63 , 0.00172512 )
( -62 , 0.00209673 )
( -61 , 0.00168258 )
( -60 , 0.00294495 )
( -59 , 0.00187048 )
( -58 , 0.00259108 )
( -57 , 0.00232791 )
( -56 , 0.00315768 )
( -55 , 0.00239908 )
( -54 , 0.00353882 )
( -53 , 0.00251110 )
( -52 , 0.00375836 )
( -51 , 0.00310102 )
( -50 , 0.00394567 )
( -49 , 0.00307271 )
( -48 , 0.00500523 )
( -47 , 0.00327629 )
( -46 , 0.00450524 )
( -45 , 0.00415829 )
( -44 , 0.00531087 )
( -43 , 0.00385433 )
( -42 , 0.00603607 )
( -41 , 0.00416771 )
( -40 , 0.00639703 )
( -39 , 0.00506303 )
( -38 , 0.00613045 )
( -37 , 0.00481755 )
( -36 , 0.00795624 )
( -35 , 0.00548849 )
( -34 , 0.00705189 )
( -33 , 0.00623853 )
( -32 , 0.00809337 )
( -31 , 0.00589033 )
( -30 , 0.00932774 )
( -29 , 0.00626500 )
( -28 , 0.00934345 )
( -27 , 0.00744108 )
( -26 , 0.00904272 )
( -25 , 0.00731377 )
( -24 , 0.0115098 )
( -23 , 0.00741965 )
( -22 , 0.0100844 )
( -21 , 0.00893806 )
( -20 , 0.0118355 )
( -19 , 0.00821833 )
( -18 , 0.0123731 )
( -17 , 0.00860249 )
( -16 , 0.0124631 )
( -15 , 0.0104622 )
( -14 , 0.0122793 )
( -13 , 0.00936978 )
( -12 , 0.0151340 )
( -11 , 0.00975676 )
( -10 , 0.0134828 )
( -9 , 0.0112211 )
( -8 , 0.0144120 )
( -7 , 0.0104988 )
( -6 , 0.0153950 )
( -5 , 0.0109857 )
( -4 , 0.0151566 )
( -3 , 0.0120620 )
( -2 , 0.0142424 )
( -1 , 0.0108822 )
( 0 , 0.0190100 )
( 1 , 0.0108822 )
( 2 , 0.0142424 )
( 3 , 0.0120620 )
( 4 , 0.0151566 )
( 5 , 0.0109857 )
( 6 , 0.0153950 )
( 7 , 0.0104988 )
( 8 , 0.0144120 )
( 9 , 0.0112211 )
( 10 , 0.0134828 )
( 11 , 0.00975676 )
( 12 , 0.0151340 )
( 13 , 0.00936978 )
( 14 , 0.0122793 )
( 15 , 0.0104622 )
( 16 , 0.0124631 )
( 17 , 0.00860249 )
( 18 , 0.0123731 )
( 19 , 0.00821833 )
( 20 , 0.0118355 )
( 21 , 0.00893806 )
( 22 , 0.0100844 )
( 23 , 0.00741965 )
( 24 , 0.0115098 )
( 25 , 0.00731377 )
( 26 , 0.00904272 )
( 27 , 0.00744108 )
( 28 , 0.00934345 )
( 29 , 0.00626500 )
( 30 , 0.00932774 )
( 31 , 0.00589033 )
( 32 , 0.00809337 )
( 33 , 0.00623853 )
( 34 , 0.00705189 )
( 35 , 0.00548849 )
( 36 , 0.00795624 )
( 37 , 0.00481755 )
( 38 , 0.00613045 )
( 39 , 0.00506303 )
( 40 , 0.00639703 )
( 41 , 0.00416771 )
( 42 , 0.00603607 )
( 43 , 0.00385433 )
( 44 , 0.00531087 )
( 45 , 0.00415829 )
( 46 , 0.00450524 )
( 47 , 0.00327629 )
( 48 , 0.00500523 )
( 49 , 0.00307271 )
( 50 , 0.00394567 )
( 51 , 0.00310102 )
( 52 , 0.00375836 )
( 53 , 0.00251110 )
( 54 , 0.00353882 )
( 55 , 0.00239908 )
( 56 , 0.00315768 )
( 57 , 0.00232791 )
( 58 , 0.00259108 )
( 59 , 0.00187048 )
( 60 , 0.00294495 )
( 61 , 0.00168258 )
( 62 , 0.00209673 )
( 63 , 0.00172512 )
( 64 , 0.00202495 )
( 65 , 0.00140858 )
( 66 , 0.00188793 )
( 67 , 0.00119365 )
( 68 , 0.00160137 )
( 69 , 0.00118777 )
( 70 , 0.00138708 )
( 71 , 0.000929895 )
( 72 , 0.00138575 )
( 73 , 0.000812491 )
( 74 , 0.000999510 )
( 75 , 0.000826663 )
( 76 , 0.000938981 )
( 77 , 0.000629717 )
( 78 , 0.000842466 )
( 79 , 0.000525731 )
( 80 , 0.000718760 )
( 81 , 0.000503188 )
( 82 , 0.000543556 )
( 83 , 0.000379630 )
( 84 , 0.000566971 )
( 85 , 0.000332730 )
( 86 , 0.000382665 )
( 87 , 0.000297272 )
( 88 , 0.000344933 )
( 89 , 0.000218453 )
( 90 , 0.000303050 )
( 91 , 0.000182148 )
( 92 , 0.000226491 )
( 93 , 0.000160173 )
( 94 , 0.000167973 )
( 95 , 0.000118361 )
( 96 , 0.000159551 )
( 97 , 0 )
( 98 , 0.000103855 )
( 99 , 0 )
( 100 , 0 )
( 101 , 0 )
( 102 ,0 )
( 103 , 0 )
( 104 , 0 )
( 105 , 0 )
( 106 , 0 )
( 107 , 0 )
( 108 , 0 )
( 109 , 0 )
( 110 , 0 )
( 111 , 0 )
( 112 , 0 )
( 113 , 0 )
( 114 , 0 )
( 115 , 0 )
( 116 , 0 )
( 117 , 0 )
( 118 , 0 )
( 119 , 0 )
( 120 , 0 )
( 121 , 0 )
( 122 , 0 )
( 123 , 0 )
( 124 , 0 )
( 125 , 0 )
( 126 , 0 )
( 127 , 0 )

    };
\addplot[blue] coordinates {
( -127 , 0 )
( -126 , 0 )
( -125 , 0 )
( -124 , 0 )
( -123 , 0 )
( -122 , 0 )
( -121 , 0 )
( -120 , 0 )
( -119 , 0 )
( -118 , 0 )
( -117 , 0 )
( -116 , 0 )
( -115 , 0 )
( -114 , 0 )
( -113 , 0 )
( -112 , 0 )
( -111 , 0 )
( -110 , 0 )
( -109 , 0 )
( -108 , 0 )
( -107 ,0 )
( -106 , 0 )
( -105 , 0 )
( -104 , 0 )
( -103 , 0 )
( -102 , 0 )
( -101 ,0 )
( -100 , 0 )
( -99 , 0 )
( -98 , 0 )
( -97 , 0.000109854 )
( -96 , 0.000124895 )
( -95 , 0.000141488 )
( -94 , 0.000159237 )
( -93 , 0.000178166 )
( -92 , 0.000198730 )
( -91 , 0.000221364 )
( -90 , 0.000246095 )
( -89 , 0.000272608 )
( -88 , 0.000300707 )
( -87 , 0.000330606 )
( -86 , 0.000362765 )
( -85 , 0.000397497 )
( -84 , 0.000434736 )
( -83 , 0.000474204 )
( -82 , 0.000515777 )
( -81 , 0.000559675 )
( -80 , 0.000606320 )
( -79 , 0.000656013 )
( -78 , 0.000708735 )
( -77 , 0.000764245 )
( -76 , 0.000822385 )
( -75 , 0.000883278 )
( -74 , 0.000947290 )
( -73 , 0.00101477 )
( -72 , 0.00108583 )
( -71 , 0.00116029 )
( -70 , 0.00123795 )
( -69 , 0.00131874 )
( -68 , 0.00140291 )
( -67 , 0.00149082 )
( -66 , 0.00158273 )
( -65 , 0.00167866 )
( -64 , 0.00177839 )
( -63 , 0.00188170 )
( -62 , 0.00198860 )
( -61 , 0.00209933 )
( -60 , 0.00221422 )
( -59 , 0.00233351 )
( -58 , 0.00245717 )
( -57 , 0.00258496 )
( -56 , 0.00271666 )
( -55 , 0.00285221 )
( -54 , 0.00299180 )
( -53 , 0.00313575 )
( -52 , 0.00328429 )
( -51 , 0.00343743 )
( -50 , 0.00359491 )
( -49 , 0.00375644 )
( -48 , 0.00392186 )
( -47 , 0.00409124 )
( -46 , 0.00426486 )
( -45 , 0.00444297 )
( -44 , 0.00462565 )
( -43 , 0.00481267 )
( -42 , 0.00500365 )
( -41 , 0.00519828 )
( -40 , 0.00539644 )
( -39 , 0.00559829 )
( -38 , 0.00580409 )
( -37 , 0.00601399 )
( -36 , 0.00622787 )
( -35 , 0.00644530 )
( -34 , 0.00666580 )
( -33 , 0.00688899 )
( -32 , 0.00711476 )
( -31 , 0.00734329 )
( -30 , 0.00757478 )
( -29 , 0.00780926 )
( -28 , 0.00804637 )
( -27 , 0.00828551 )
( -26 , 0.00852594 )
( -25 , 0.00876717 )
( -24 , 0.00900909 )
( -23 , 0.00925180 )
( -22 , 0.00949544 )
( -21 , 0.00973986 )
( -20 , 0.00998445 )
( -19 , 0.0102282 )
( -18 , 0.0104700 )
( -17 , 0.0107092 )
( -16 , 0.0109454 )
( -15 , 0.0111786 )
( -14 , 0.0114091 )
( -13 , 0.0116362 )
( -12 , 0.0118588 )
( -11 , 0.0120752 )
( -10 , 0.0122834 )
( -9 , 0.0124819 )
( -8 , 0.0126701 )
( -7 , 0.0128478 )
( -6 , 0.0130148 )
( -5 , 0.0131700 )
( -4 , 0.0133101 )
( -3 , 0.0134299 )
( -2 , 0.0135226 )
( -1 , 0.0135815 )
( 0 , 0.0136018 )
( 1 , 0.0135815 )
( 2 , 0.0135226 )
( 3 , 0.0134299 )
( 4 , 0.0133101 )
( 5 , 0.0131700 )
( 6 , 0.0130148 )
( 7 , 0.0128478 )
( 8 , 0.0126701 )
( 9 , 0.0124819 )
( 10 , 0.0122834 )
( 11 , 0.0120752 )
( 12 , 0.0118588 )
( 13 , 0.0116362 )
( 14 , 0.0114091 )
( 15 , 0.0111786 )
( 16 , 0.0109454 )
( 17 , 0.0107092 )
( 18 , 0.0104700 )
( 19 , 0.0102282 )
( 20 , 0.00998445 )
( 21 , 0.00973986 )
( 22 , 0.00949544 )
( 23 , 0.00925180 )
( 24 , 0.00900909 )
( 25 , 0.00876717 )
( 26 , 0.00852594 )
( 27 , 0.00828551 )
( 28 , 0.00804637 )
( 29 , 0.00780926 )
( 30 , 0.00757478 )
( 31 , 0.00734329 )
( 32 , 0.00711476 )
( 33 , 0.00688899 )
( 34 , 0.00666580 )
( 35 , 0.00644530 )
( 36 , 0.00622787 )
( 37 , 0.00601399 )
( 38 , 0.00580409 )
( 39 , 0.00559829 )
( 40 , 0.00539644 )
( 41 , 0.00519828 )
( 42 , 0.00500365 )
( 43 , 0.00481267 )
( 44 , 0.00462565 )
( 45 , 0.00444297 )
( 46 , 0.00426486 )
( 47 , 0.00409124 )
( 48 , 0.00392186 )
( 49 , 0.00375644 )
( 50 , 0.00359491 )
( 51 , 0.00343743 )
( 52 , 0.00328429 )
( 53 , 0.00313575 )
( 54 , 0.00299180 )
( 55 , 0.00285221 )
( 56 , 0.00271666 )
( 57 , 0.00258496 )
( 58 , 0.00245717 )
( 59 , 0.00233351 )
( 60 , 0.00221422 )
( 61 , 0.00209933 )
( 62 , 0.00198860 )
( 63 , 0.00188170 )
( 64 , 0.00177839 )
( 65 , 0.00167866 )
( 66 , 0.00158273 )
( 67 , 0.00149082 )
( 68 , 0.00140291 )
( 69 , 0.00131874 )
( 70 , 0.00123795 )
( 71 , 0.00116029 )
( 72 , 0.00108583 )
( 73 , 0.00101477 )
( 74 , 0.000947290 )
( 75 , 0.000883278 )
( 76 , 0.000822385 )
( 77 , 0.000764245 )
( 78 , 0.000708735 )
( 79 , 0.000656013 )
( 80 , 0.000606320 )
( 81 , 0.000559675 )
( 82 , 0.000515777 )
( 83 , 0.000474204 )
( 84 , 0.000434736 )
( 85 , 0.000397497 )
( 86 , 0.000362765 )
( 87 , 0.000330606 )
( 88 , 0.000300707 )
( 89 , 0.000272608 )
( 90 , 0.000246095 )
( 91 , 0.000221364 )
( 92 , 0.000198730 )
( 93 , 0.000178166 )
( 94 , 0.000159237 )
( 95 , 0.000141488 )
( 96 , 0.000124895 )
( 97 , 0.000109854 )
( 98 , 0 )
( 99 , 0 )
( 100 , 0 )
( 101 , 0 )
( 102 , 0 )
( 103 , 0 )
( 104 , 0 )
( 105 , 0 )
( 106 , 0 )
( 107 , 0 )
( 108 , 0 )
( 109 , 0 )
( 110 , 0 )
( 111 , 0 )
( 112 , 0 )
( 113 , 0 )
( 114 , 0 )
( 115 , 0 )
( 116 , 0 )
( 117 , 0 )
( 118 , 0 )
( 119 , 0 )
( 120 , 0 )
( 121 , 0 )
( 122 , 0 )
( 123 , 0 )
( 124 , 0 )
( 125 , 0 )
( 126 , 0 )
( 127 , 0 )
};  
  \end{axis}
\end{tikzpicture}
\end{center}
\caption{Case $g=2$ and $q=1009$. The red dots are the values of $H'$. The black stars are the values of the approximation of $\nu'(q,t)$. The blue graph is the approximation of the Sato-Tate density. In this case, $d(H',\nu')\approx0.00439$ and $d(H',\nu_\infty)\approx0.15528$.}
\end{figure}


\begin{figure}[h]
\begin{center}
\begin{tikzpicture}[xscale=1.1, yscale=1.1]
  \begin{axis}
    [scaled ticks=false, ymin=0, ymax=0.065, ytick={0,0.015,0.03,0.045,0.06}, yticklabels={0,0.015,0.03,0.045,0.06}]
    \addplot[only marks, red, mark size=0.3mm] coordinates {
( -40 , 0 )
( -39 , 0 )
( -38 , 0 )
( -37 , 0 )
( -36 , 0.000015651 )
( -35 , 0.000027662 )
( -34 , 0.000069883 )
( -33 , 0.000098191 )
( -32 , 0.00020152 )
( -31 , 0.00023595 )
( -30 , 0.00055858 )
( -29 , 0.00049850 )
( -28 , 0.00099522 )
( -27 , 0.0011010 )
( -26 , 0.0017053 )
( -25 , 0.0016990 )
( -24 , 0.0033134 )
( -23 , 0.0025856 )
( -22 , 0.0042524 )
( -21 , 0.0044856 )
( -20 , 0.0068780 )
( -19 , 0.0056624 )
( -18 , 0.0099868 )
( -17 , 0.0077327 )
( -16 , 0.012666 )
( -15 , 0.012039 )
( -14 , 0.015587 )
( -13 , 0.013183 )
( -12 , 0.023504 )
( -11 , 0.016556 )
( -10 , 0.025070 )
( -9 , 0.023037 )
( -8 , 0.031537 )
( -7 , 0.024504 )
( -6 , 0.038462 )
( -5 , 0.029061 )
( -4 , 0.041859 )
( -3 , 0.035569 )
( -2 , 0.043142 )
( -1 , 0.034024 )
( 0 , 0.056193 )
( 1 , 0.034024 )
( 2 , 0.043142 )
( 3 , 0.035569 )
( 4 , 0.041859 )
( 5 , 0.029061 )
( 6 , 0.038462 )
( 7 , 0.024504 )
( 8 , 0.031537 )
( 9 , 0.023037 )
( 10 , 0.025070 )
( 11 , 0.016556 )
( 12 , 0.023504 )
( 13 , 0.013183 )
( 14 , 0.015587 )
( 15 , 0.012039 )
( 16 , 0.012666 )
( 17 , 0.0077327 )
( 18 , 0.0099868 )
( 19 , 0.0056624 )
( 20 , 0.0068780 )
( 21 , 0.0044856 )
( 22 , 0.0042524 )
( 23 , 0.0025856 )
( 24 , 0.0033134 )
( 25 , 0.0016990 )
( 26 , 0.0017053 )
( 27 , 0.0011010 )
( 28 , 0.00099522 )
( 29 , 0.00049850 )
( 30 , 0.00055858 )
( 31 , 0.00023595 )
( 32 , 0.00020152 )
( 33 , 0.000098191 )
( 34 , 0.000069883 )
( 35 , 0.000027662 )
( 36 , 0.000015651 )
( 37 , 0 )
( 38 , 0 )
( 39 , 0 )
( 40 , 0 )

    };

        \addplot[only marks, mark=star, mark size=0.5mm] coordinates {
( -40 , 0 )
( -39 , 0 )
( -38 , 0 )
( -37 , 0 )
( -36 , 0 )
( -35 , 0 )
( -34 , 0.000063130 )
( -33 , 0.00010650 )
( -32 , 0.00021866 )
( -31 , 0.00024369 )
( -30 , 0.00056706 )
( -29 , 0.00051927 )
( -28 , 0.0010466 )
( -27 , 0.0011038 )
( -26 , 0.0016918 )
( -25 , 0.0017179 )
( -24 , 0.0033791 )
( -23 , 0.0026131 )
( -22 , 0.0042602 )
( -21 , 0.0045211 )
( -20 , 0.0069269 )
( -19 , 0.0055687 )
( -18 , 0.0097959 )
( -17 , 0.0076584 )
( -16 , 0.012525 )
( -15 , 0.011989 )
( -14 , 0.015477 )
( -13 , 0.013124 )
( -12 , 0.023676 )
( -11 , 0.016483 )
( -10 , 0.024857 )
( -9 , 0.022712 )
( -8 , 0.031127 )
( -7 , 0.024335 )
( -6 , 0.038536 )
( -5 , 0.028808 )
( -4 , 0.041896 )
( -3 , 0.035485 )
( -2 , 0.042942 )
( -1 , 0.033742 )
( 0 , 0.060567 )
( 1 , 0.033742 )
( 2 , 0.042942 )
( 3 , 0.035485 )
( 4 , 0.041896 )
( 5 , 0.028808 )
( 6 , 0.038536 )
( 7 , 0.024335 )
( 8 , 0.031127 )
( 9 , 0.022712 )
( 10 , 0.024857 )
( 11 , 0.016483 )
( 12 , 0.023676 )
( 13 , 0.013124 )
( 14 , 0.015477 )
( 15 , 0.011989 )
( 16 , 0.012525 )
( 17 , 0.0076584 )
( 18 , 0.0097959 )
( 19 , 0.0055687 )
( 20 , 0.0069269 )
( 21 , 0.0045211 )
( 22 , 0.0042602 )
( 23 , 0.0026131 )
( 24 , 0.0033791 )
( 25 , 0.0017179 )
( 26 , 0.0016918 )
( 27 , 0.0011038 )
( 28 , 0.0010466 )
( 29 , 0.00051927 )
( 30 , 0.00056706 )
( 31 , 0.00024369 )
( 32 , 0.00021866 )
( 33 , 0.00010650 )
( 34 , 0.000063130 )
( 35 , 0 )
( 36 , 0 )
( 37 , 0 )
( 38 , 0 )
( 39 , 0 )
( 40 , 0 )

    };
\addplot[blue] coordinates {
( -40 , 0 )
( -39 , 0 )
( -38 , 0 )
( -37 , 0 )
( -36 , 0 )
( -35 , 0 )
( -34 , 0.000060067 )
( -33 , 0.00011658 )
( -32 , 0.00019307 )
( -31 , 0.00030575 )
( -30 , 0.00045696 )
( -29 , 0.00065159 )
( -28 , 0.00090536 )
( -27 , 0.0012181 )
( -26 , 0.0016062 )
( -25 , 0.0020705 )
( -24 , 0.0026261 )
( -23 , 0.0032742 )
( -22 , 0.0040299 )
( -21 , 0.0048919 )
( -20 , 0.0058762 )
( -19 , 0.0069783 )
( -18 , 0.0082139 )
( -17 , 0.0095761 )
( -16 , 0.011076 )
( -15 , 0.012709 )
( -14 , 0.014474 )
( -13 , 0.016374 )
( -12 , 0.018391 )
( -11 , 0.020531 )
( -10 , 0.022769 )
( -9 , 0.025095 )
( -8 , 0.027491 )
( -7 , 0.029914 )
( -6 , 0.032354 )
( -5 , 0.034739 )
( -4 , 0.037030 )
( -3 , 0.039151 )
( -2 , 0.040968 )
( -1 , 0.042390 )
( 0 , 0.042989 )
( 1 , 0.042390 )
( 2 , 0.040968 )
( 3 , 0.039151 )
( 4 , 0.037030 )
( 5 , 0.034739 )
( 6 , 0.032354 )
( 7 , 0.029914 )
( 8 , 0.027491 )
( 9 , 0.025095 )
( 10 , 0.022769 )
( 11 , 0.020531 )
( 12 , 0.018391 )
( 13 , 0.016374 )
( 14 , 0.014474 )
( 15 , 0.012709 )
( 16 , 0.011076 )
( 17 , 0.0095761 )
( 18 , 0.0082139 )
( 19 , 0.0069783 )
( 20 , 0.0058762 )
( 21 , 0.0048919 )
( 22 , 0.0040299 )
( 23 , 0.0032742 )
( 24 , 0.0026261 )
( 25 , 0.0020705 )
( 26 , 0.0016062 )
( 27 , 0.0012181 )
( 28 , 0.00090536 )
( 29 , 0.00065159 )
( 30 , 0.00045696 )
( 31 , 0.00030575 )
( 32 , 0.00019307 )
( 33 , 0.00011658 )
( 34 , 0.000060067 )
( 35 , 0 )
( 36 , 0 )
( 37 , 0 )
( 38 , 0 )
( 39 , 0 )
( 40 , 0 )

};  
  \end{axis}
\end{tikzpicture}
\caption{Case $g=2$ and $q=101$. The red dots are the values of $H'$. The black stars are the values of the approximation of $\nu'(q,t)$. The blue graph is the approximation of the Sato-Tate density. In this case, $d(H',\nu')\approx 0.01117$ and $d(H',\nu_\infty)\approx0.15166$.}
\end{center}
\end{figure}


\FloatBarrier

\begin{figure}
\begin{center}
\begin{tikzpicture}[xscale=1.1, yscale=1.1]
  \begin{axis} 
  [scaled ticks=false, ymin=0, ymax=0.06, ytick={0,0.015,0.03,0.045,0.06}, yticklabels={0,0.015,0.03,0.045,0.06}]
\addplot[only marks, red, mark size=0.3mm] coordinates {
( -43 , 0 )
( -42 , 0 )
( -41 , 0 )
( -40 , 0 )
( -39 , 0 )
( -38 , 0 )
( -37 , 0 )
( -36 , 0 )
( -35 , 0 )
( -34 , 0 )
( -33 , 0 )
( -32 , 0 )
( -31 , 0 )
( -30 , 0 )
( -29 , 0 )
( -28 , 0 )
( -27 , 0 )
( -26 , 0 )
( -25 , 0 )
( -24 , 0.0001366 )
( -23 , 0.0002253 )
( -22 , 0.0003775 )
( -21 , 0.0005913 )
( -20 , 0.0009333 )
( -19 , 0.001384 )
( -18 , 0.002101 )
( -17 , 0.002967 )
( -16 , 0.004294 )
( -15 , 0.005864 )
( -14 , 0.008132 )
( -13 , 0.01061 )
( -12 , 0.01419 )
( -11 , 0.01763 )
( -10 , 0.02239 )
( -9 , 0.02663 )
( -8 , 0.03203 )
( -7 , 0.03615 )
( -6 , 0.04178 )
( -5 , 0.04487 )
( -4 , 0.04945 )
( -3 , 0.05133 )
( -2 , 0.05418 )
( -1 , 0.05391 )
( 0 , 0.05516 )
( 1 , 0.05278 )
( 2 , 0.05198 )
( 3 , 0.04831 )
( 4 , 0.04578 )
( 5 , 0.04100 )
( 6 , 0.03782 )
( 7 , 0.03261 )
( 8 , 0.02896 )
( 9 , 0.02434 )
( 10 , 0.02087 )
( 11 , 0.01692 )
( 12 , 0.01415 )
( 13 , 0.01111 )
( 14 , 0.009007 )
( 15 , 0.006920 )
( 16 , 0.005435 )
( 17 , 0.004048 )
( 18 , 0.003109 )
( 19 , 0.002236 )
( 20 , 0.001653 )
( 21 , 0.001155 )
( 22 , 0.0008196 )
( 23 , 0.0005468 )
( 24 , 0.0003722 )
( 25 , 0.0002355 )
( 26 , 0.0001513 )
( 27 , 0 )
( 28 , 0 )
( 29 , 0 )
( 30 , 0 )
( 31 , 0 )
( 32 , 0 )
( 33 , 0 )
( 34 , 0 )
( 35 , 0 )
( 36 , 0 )
( 37 , 0 )
( 38 , 0 )
( 39 , 0 )
( 40 , 0 )
( 41 , 0 )
( 42 , 0 )
( 43 , 0 )
    };

        \addplot[only marks, mark=star, mark size=0.3mm] coordinates {

( -43 , 0 )
( -42 , 0 )
( -41 , 0 )
( -40 , 0 )
( -39 , 0 )
( -38 , 0 )
( -37 , 0 )
( -36 , 0 )
( -35 , 0 )
( -34 , 0 )
( -33 , 0 )
( -32 , 0 )
( -31 , 0 )
( -30 , 0 )
( -29 , 0 )
( -28 , 0 )
( -27 , 0 )
( -26 , 0 )
( -25 , 0.0001556 )
( -24 , 0.0002556 )
( -23 , 0.0003856 )
( -22 , 0.0006006 )
( -21 , 0.0008723 )
( -20 , 0.001299 )
( -19 , 0.001808 )
( -18 , 0.002613 )
( -17 , 0.003502 )
( -16 , 0.004882 )
( -15 , 0.006380 )
( -14 , 0.008591 )
( -13 , 0.01084 )
( -12 , 0.01421 )
( -11 , 0.01724 )
( -10 , 0.02167 )
( -9 , 0.02543 )
( -8 , 0.03057 )
( -7 , 0.03430 )
( -6 , 0.03988 )
( -5 , 0.04283 )
( -4 , 0.04773 )
( -3 , 0.04970 )
( -2 , 0.05317 )
( -1 , 0.05322 )
( 0 , 0.05529 )
( 1 , 0.05322 )
( 2 , 0.05317 )
( 3 , 0.04970 )
( 4 , 0.04773 )
( 5 , 0.04283 )
( 6 , 0.03988 )
( 7 , 0.03430 )
( 8 , 0.03057 )
( 9 , 0.02543 )
( 10 , 0.02167 )
( 11 , 0.01724 )
( 12 , 0.01421 )
( 13 , 0.01084 )
( 14 , 0.008591 )
( 15 , 0.006380 )
( 16 , 0.004882 )
( 17 , 0.003502 )
( 18 , 0.002613 )
( 19 , 0.001808 )
( 20 , 0.001299 )
( 21 , 0.0008723 )
( 22 , 0.0006006 )
( 23 , 0.0003856 )
( 24 , 0.0002556 )
( 25 , 0.0001556 )
( 26 , 0 )
( 27 , 0 )
( 28 , 0 )
( 29 , 0 )
( 30 , 0 )
( 31 , 0 )
( 32 , 0 )
( 33 , 0 )
( 34 , 0 )
( 35 , 0 )
( 36 , 0 )
( 37 , 0 )
( 38 , 0 )
( 39 , 0 )
( 40 , 0 )
( 41 , 0 )
( 42 , 0 )
( 43 , 0 )

    };
\addplot[blue] coordinates {
( -43 , 0 )
( -42 , 0 )
( -41 , 0 )
( -40 , 0 )
( -39 , 0 )
( -38 , 0 )
( -37 , 0 )
( -36 , 0 )
( -35 , 0 )
( -34 , 0 )
( -33 , 0 )
( -32 , 0 )
( -31 , 0 )
( -30 , 0 )
( -29 , 0 )
( -28 , 0 )
( -27 , 0 )
( -26 , 0 )
( -25 , 0.0001578 )
( -24 , 0.0002517 )
( -23 , 0.0003911 )
( -22 , 0.0005937 )
( -21 , 0.0008817 )
( -20 , 0.001284 )
( -19 , 0.001834 )
( -18 , 0.002574 )
( -17 , 0.003552 )
( -16 , 0.004825 )
( -15 , 0.006450 )
( -14 , 0.008489 )
( -13 , 0.01099 )
( -12 , 0.01400 )
( -11 , 0.01749 )
( -10 , 0.02142 )
( -9 , 0.02571 )
( -8 , 0.03022 )
( -7 , 0.03480 )
( -6 , 0.03927 )
( -5 , 0.04345 )
( -4 , 0.04716 )
( -3 , 0.05024 )
( -2 , 0.05255 )
( -1 , 0.05397 )
( 0 , 0.05446 )
( 1 , 0.05397 )
( 2 , 0.05255 )
( 3 , 0.05024 )
( 4 , 0.04716 )
( 5 , 0.04345 )
( 6 , 0.03927 )
( 7 , 0.03480 )
( 8 , 0.03022 )
( 9 , 0.02571 )
( 10 , 0.02142 )
( 11 , 0.01749 )
( 12 , 0.01400 )
( 13 , 0.01099 )
( 14 , 0.008489 )
( 15 , 0.006450 )
( 16 , 0.004825 )
( 17 , 0.003552 )
( 18 , 0.002574 )
( 19 , 0.001834 )
( 20 , 0.001284 )
( 21 , 0.0008817 )
( 22 , 0.0005937 )
( 23 , 0.0003911 )
( 24 , 0.0002517 )
( 25 , 0.0001578 )
( 26 , 0 )
( 27 , 0 )
( 28 , 0 )
( 29 , 0 )
( 30 , 0 )
( 31 , 0 )
( 32 , 0 )
( 33 , 0 )
( 34 , 0 )
( 35 , 0 )
( 36 , 0 )
( 37 , 0 )
( 38 , 0 )
( 39 , 0 )
( 40 , 0 )
( 41 , 0 )
( 42 , 0 )
( 43 , 0 )

};  
  \end{axis}
\end{tikzpicture}
\end{center}
\caption{Case $g=3$ and $q=53$. The red dots are the values of $H'$. The black stars are the values of the approximation of $\nu'(q,t)$. The blue graph is the approximation of the Sato-Tate density. In this case, $d(H',\nu')\approx0.03842$ and $d(H',\nu_\infty)\approx0.03940$.
}
\label{fig:53}
\end{figure}


\begin{figure}
\begin{center}
\begin{tikzpicture}[xscale=1.1, yscale=1.1]
  \begin{axis}
    [ymin=0, ymax=0.3, ytick={0,0.1,0.2,0.3}, yticklabels={0,0.1,0.2,0.3}]
    \addplot[only marks, red, mark size=0.3mm] coordinates {
( -8 , 0 )
( -7 , 0 )
( -6 , 0.0040000 )
( -5 , 0.012800 )
( -4 , 0.045000 )
( -3 , 0.070666 )
( -2 , 0.13200 )
( -1 , 0.14800 )
( 0 , 0.17507 )
( 1 , 0.14800 )
( 2 , 0.13200 )
( 3 , 0.070666 )
( 4 , 0.045000 )
( 5 , 0.012800 )
( 6 , 0.0040000 )
( 7 , 0 )
( 8 , 0 )

    };

        \addplot[only marks, mark=star, mark size=0.5mm] coordinates {
( -8 , 0 )
( -7 , 0.00090298 )
( -6 , 0.0064755 )
( -5 , 0.013400 )
( -4 , 0.041195 )
( -3 , 0.061752 )
( -2 , 0.11628 )
( -1 , 0.12634 )
( 0 , 0.26730 )
( 1 , 0.12634 )
( 2 , 0.11628 )
( 3 , 0.061752 )
( 4 , 0.041195 )
( 5 , 0.013400 )
( 6 , 0.0064755 )
( 7 , 0.00090298 )
( 8 , 0 )

    };
\addplot[blue] coordinates {
( -8 , 0 )
( -79/10 , 0 )
( -39/5 , 0 )
( -77/10 , 0 )
( -38/5 , 0.00036331 )
( -15/2 , 0.00025034 )
( -37/5 , 0.00041402 )
( -73/10 , 0.00054269 )
( -36/5 , 0.00072535 )
( -71/10 , 0.00090992 )
( -7 , 0.0011300 )
( -69/10 , 0.0013683 )
( -34/5 , 0.0016434 )
( -67/10 , 0.0019666 )
( -33/5 , 0.0023256 )
( -13/2 , 0.0027215 )
( -32/5 , 0.0031729 )
( -63/10 , 0.0036829 )
( -31/5 , 0.0042408 )
( -61/10 , 0.0048502 )
( -6 , 0.0055278 )
( -59/10 , 0.0062779 )
( -29/5 , 0.0070911 )
( -57/10 , 0.0079681 )
( -28/5 , 0.0089239 )
( -11/2 , 0.0099672 )
( -27/5 , 0.011091 )
( -53/10 , 0.012292 )
( -26/5 , 0.013579 )
( -51/10 , 0.014965 )
( -5 , 0.016452 )
( -49/10 , 0.018030 )
( -24/5 , 0.019701 )
( -47/10 , 0.021477 )
( -23/5 , 0.023368 )
( -9/2 , 0.025370 )
( -22/5 , 0.027475 )
( -43/10 , 0.029686 )
( -21/5 , 0.032016 )
( -41/10 , 0.034472 )
( -4 , 0.037047 )
( -39/10 , 0.039735 )
( -19/5 , 0.042536 )
( -37/10 , 0.045462 )
( -18/5 , 0.048520 )
( -7/2 , 0.051703 )
( -17/5 , 0.055001 )
( -33/10 , 0.058413 )
( -16/5 , 0.061945 )
( -31/10 , 0.065609 )
( -3 , 0.069397 )
( -29/10 , 0.073295 )
( -14/5 , 0.077295 )
( -27/10 , 0.081402 )
( -13/5 , 0.085626 )
( -5/2 , 0.089962 )
( -12/5 , 0.094394 )
( -23/10 , 0.098903 )
( -11/5 , 0.10349 )
( -21/10 , 0.10816 )
( -2 , 0.11291 )
( -19/10 , 0.11773 )
( -9/5 , 0.12258 )
( -17/10 , 0.12746 )
( -8/5 , 0.13236 )
( -3/2 , 0.13728 )
( -7/5 , 0.14222 )
( -13/10 , 0.14713 )
( -6/5 , 0.15197 )
( -11/10 , 0.15672 )
( -1 , 0.16140 )
( -9/10 , 0.16598 )
( -4/5 , 0.17042 )
( -7/10 , 0.17465 )
( -3/5 , 0.17860 )
( -1/2 , 0.18225 )
( -2/5 , 0.18560 )
( -3/10 , 0.18857 )
( -1/5 , 0.19101 )
( -1/10 , 0.19264 )
( 0 , 0.19321 )
( 1/10 , 0.19264 )
( 1/5 , 0.19101 )
( 3/10 , 0.18857 )
( 2/5 , 0.18560 )
( 1/2 , 0.18225 )
( 3/5 , 0.17860 )
( 7/10 , 0.17465 )
( 4/5 , 0.17042 )
( 9/10 , 0.16598 )
( 1 , 0.16140 )
( 11/10 , 0.15672 )
( 6/5 , 0.15197 )
( 13/10 , 0.14713 )
( 7/5 , 0.14222 )
( 3/2 , 0.13728 )
( 8/5 , 0.13236 )
( 17/10 , 0.12746 )
( 9/5 , 0.12258 )
( 19/10 , 0.11773 )
( 2 , 0.11291 )
( 21/10 , 0.10816 )
( 11/5 , 0.10349 )
( 23/10 , 0.098903 )
( 12/5 , 0.094394 )
( 5/2 , 0.089962 )
( 13/5 , 0.085626 )
( 27/10 , 0.081402 )
( 14/5 , 0.077295 )
( 29/10 , 0.073295 )
( 3 , 0.069397 )
( 31/10 , 0.065609 )
( 16/5 , 0.061945 )
( 33/10 , 0.058413 )
( 17/5 , 0.055001 )
( 7/2 , 0.051703 )
( 18/5 , 0.048520 )
( 37/10 , 0.045462 )
( 19/5 , 0.042536 )
( 39/10 , 0.039735 )
( 4 , 0.037047 )
( 41/10 , 0.034472 )
( 21/5 , 0.032016 )
( 43/10 , 0.029686 )
( 22/5 , 0.027475 )
( 9/2 , 0.025370 )
( 23/5 , 0.023368 )
( 47/10 , 0.021477 )
( 24/5 , 0.019701 )
( 49/10 , 0.018030 )
( 5 , 0.016452 )
( 51/10 , 0.014965 )
( 26/5 , 0.013579 )
( 53/10 , 0.012292 )
( 27/5 , 0.011091 )
( 11/2 , 0.0099672 )
( 28/5 , 0.0089239 )
( 57/10 , 0.0079681 )
( 29/5 , 0.0070911 )
( 59/10 , 0.0062779 )
( 6 , 0.0055278 )
( 61/10 , 0.0048502 )
( 31/5 , 0.0042408 )
( 63/10 , 0.0036829 )
( 32/5 , 0.0031729 )
( 13/2 , 0.0027215 )
( 33/5 , 0.0023256 )
( 67/10 , 0.0019666 )
( 34/5 , 0.0016434 )
( 69/10 , 0.0013683 )
( 7 , 0.0011300 )
( 71/10 , 0.00090992 )
( 36/5 , 0.00072535 )
( 73/10 , 0.00054269 )
( 37/5 , 0.00041402 )
( 15/2 , 0.00025034 )
( 38/5 , 0.00036331 )
( 77/10 , 0 )
( 39/5 , 0 )
( 79/10 , 0 )
( 8 , 0 )

};  
  \end{axis}
\end{tikzpicture}
\caption{Case $g=2$ and $q=5$. As pointed out in Remark \ref{rmk: q small wrt g}, there is an issue when $q+1-t<0$ (for example when $t=7$). Indeed, $H'(q, 7)=0$ because $q+1-t$ represents the number of $\F_q$-rational points of a curve. Instead, both $\nu'(q,7)\approx0.0009$ and $\nu_\infty(q,7)\approx0.0011$ are strictly positive.}
\end{center}
\end{figure}

\begin{figure}
\begin{center}
\begin{tikzpicture}[xscale=1.2, yscale=1.2]
  \begin{axis}
    [ymin=0, ymax=0.11, ytick={0,0.025,0.05,0.075,0.1}, yticklabels={0,0.025,0.05,0.075,0.1}]
    \addplot[only marks, red, mark size=0.25mm] coordinates {
( -24 , 0 )
( -23 , 0 )
( -22 , 0 )
( -21 , 0 )
( -20 , 0.0001851 )
( -19 , 0.0002764 )
( -18 , 0.0009394 )
( -17 , 0.001106 )
( -16 , 0.002456 )
( -15 , 0.003333 )
( -14 , 0.005705 )
( -13 , 0.005745 )
( -12 , 0.01264 )
( -11 , 0.01094 )
( -10 , 0.01918 )
( -9 , 0.01955 )
( -8 , 0.03029 )
( -7 , 0.02689 )
( -6 , 0.04544 )
( -5 , 0.03764 )
( -4 , 0.05804 )
( -3 , 0.05239 )
( -2 , 0.06766 )
( -1 , 0.05589 )
( 0 , 0.08728 )
( 1 , 0.05589 )
( 2 , 0.06766 )
( 3 , 0.05239 )
( 4 , 0.05804 )
( 5 , 0.03764 )
( 6 , 0.04544 )
( 7 , 0.02689 )
( 8 , 0.03029 )
( 9 , 0.01955 )
( 10 , 0.01918 )
( 11 , 0.01094 )
( 12 , 0.01264 )
( 13 , 0.005745 )
( 14 , 0.005705 )
( 15 , 0.003333 )
( 16 , 0.002456 )
( 17 , 0.001106 )
( 18 , 0.0009394 )
( 19 , 0.0002764 )
( 20 , 0.0001851 )
( 21 , 0 )
( 22 , 0 )
( 23 , 0 )
( 24 , 0 )

    };

\addplot[only marks, green, mark size=0.25mm] coordinates {
( -24 , 0 )
( -23 , 0 )
( -22 , 0.00003)
( -21 , 0.00007)
( -20 , 0.00023032 )
( -19 , 0.00031786 )
( -18 , 0.0010120 )
( -17 , 0.0011738 )
( -16 , 0.0025397 )
( -15 , 0.0033860 )
( -14 , 0.0057712 )
( -13 , 0.0058088 )
( -12 , 0.012607 )
( -11 , 0.010895 )
( -10 , 0.019022 )
( -9 , 0.019382 )
( -8 , 0.029940 )
( -7 , 0.026535 )
( -6 , 0.044744 )
( -5 , 0.037105 )
( -4 , 0.057110 )
( -3 , 0.051474 )
( -2 , 0.066426 )
( -1 , 0.054918 )
( 0 , 0.098991 )
( 1 , 0.054918 )
( 2 , 0.066426 )
( 3 , 0.051474 )
( 4 , 0.057110 )
( 5 , 0.037105 )
( 6 , 0.044744 )
( 7 , 0.026535 )
( 8 , 0.029940 )
( 9 , 0.019382 )
( 10 , 0.019022 )
( 11 , 0.010895 )
( 12 , 0.012607 )
( 13 , 0.0058088 )
( 14 , 0.0057712 )
( 15 , 0.0033860 )
( 16 , 0.0025397 )
( 17 , 0.0011738 )
( 18 , 0.0010120 )
( 19 , 0.00031786 )
( 20 , 0.00023032 )
( 21 , 0.00007)
( 22 , 0.00003)
( 23 , 0 )
( 24 , 0 )
};  
        \addplot[only marks, mark=star, mark size=0.3mm] coordinates {
( -24 , 0 )
( -23 , 0 )
( -22 , 0 )
( -21 , 0 )
( -20 , 0.0002209 )
( -19 , 0.0003434 )
( -18 , 0.0009804 )
( -17 , 0.001166 )
( -16 , 0.002682 )
( -15 , 0.003372 )
( -14 , 0.005649 )
( -13 , 0.005926 )
( -12 , 0.01275 )
( -11 , 0.01064 )
( -10 , 0.01861 )
( -9 , 0.01917 )
( -8 , 0.02990 )
( -7 , 0.02600 )
( -6 , 0.04479 )
( -5 , 0.03697 )
( -4 , 0.05795 )
( -3 , 0.05157 )
( -2 , 0.06689 )
( -1 , 0.05485 )
( 0 , 0.09912 )
( 1 , 0.05485 )
( 2 , 0.06689 )
( 3 , 0.05157 )
( 4 , 0.05795 )
( 5 , 0.03697 )
( 6 , 0.04479 )
( 7 , 0.02600 )
( 8 , 0.02990 )
( 9 , 0.01917 )
( 10 , 0.01861 )
( 11 , 0.01064 )
( 12 , 0.01275 )
( 13 , 0.005926 )
( 14 , 0.005649 )
( 15 , 0.003372 )
( 16 , 0.002682 )
( 17 , 0.001166 )
( 18 , 0.0009804 )
( 19 , 0.0003434 )
( 20 , 0.0002209 )
( 21 , 0 )
( 22 , 0 )
( 23 , 0 )
( 24 , 0 )

    };
  \end{axis}
\end{tikzpicture}
\caption{This graph shows the difference between considering all PPAVs or only Jacobians of curves (see Remark \ref{rem:PPAV}). We take $g=2$ and $q=37$. We plot in red the distribution $H'$ and in black (an approximation of) the distribution $\nu'(q, t)$. The green dots represent the probabilities of the various traces when we take into account all principally polarised abelian surfaces over $\mathbb{F}_q$. Call this distribution $H''$.
The distance between the distributions $H'$ and $\nu'(q, t)$ is $\approx0.02673$. {The distance between $H''$ and $\nu'(q, t)$ is $\approx  0.00777$.}
{Note in particular the considerable difference between the data at $t=0$, where the inclusion of all PPAVs gives a much better agreement with our prediction}. An explanation for this phenomenon is given in Remark \ref{rmk: PP abelian surfaces of trace 0}.}
\label{fig:PPAV}
\end{center}
\end{figure}


\FloatBarrier


\section{Well-posedness of Equation \texorpdfstring{\eqref{eq: guess for the density}}{}}\label{sec:wellpos}
In this section we prove that the quantity $\nu(q,t)$ is well-defined. We have already observed (Remark \ref{rmk: nu ell q t is well-defined}) that $\nu_{\ell}(q, t)$ is well-defined for all $\ell \leq \infty$, so it suffices to show that, as $\ell \to \infty$ among the prime numbers, we have $\nu_{\ell}(q, t)=1+O(\ell^{-2})$. This suffices to ensure that the product \eqref{eq: guess for the density} converges.

As a preparation for the proof, we introduce the following notation and make some remarks.
\begin{notation}\label{notation: GSp2gm}
Let $R$ be a (commutative unitary) ring and let $m \in R^\times$ be a fixed element. We define $\GSp_{2g, R}^{m}$ as the subscheme of $\GSp_{2g, R}$ cut by the equation $\mult(M)=m$.
\end{notation}
\begin{remark}\label{remark:shiftingGsp}
Let us fix the antisymmetric form
$\begin{pmatrix}
        0 & \operatorname{Id}_g \\
        -\operatorname{Id}_g & 0
    \end{pmatrix}$
. The matrix
\[
M_m \colonequals \begin{pmatrix}
    m \\
    & \ddots \\
    && m \\
    &&& 1 \\
    &&&& \ddots \\
    &&&&& 1
\end{pmatrix}
\]
is in $\GSp_{2g}(R)$ and has multiplier $m$. Multiplication by $M_m$ gives an algebraic isomorphism between the $R$-schemes $\Sp_{2g, R}$ and $\GSp_{2g, R}^{m}$ . The same applies for any matrix $M_m \in \GSp_{2g}(R)$ with multiplier $m$.
In particular, $\GSp_{2g, R}^{m}$ is smooth for any value of $m$. If $R$ is a field, the dimension of $\GSp_{2g, R}^{m}$ is equal to $\dim \Sp_{2g, R}$.
\end{remark}
In what follows we will be interested in the subschemes of $\GSp_{2g, R}^{m}$ defined by the equation $\Tr(M)=t$ for a fixed value of $t \in R$. We will mostly work with $R=\Z_\ell$ and $R=\F_\ell$. 
\begin{definition}\label{def: X_t^m}
For $m \in R^\times, t \in R$, we define the $R$-scheme $X^m_t$ as the subscheme of $\GSp_{2g, R}^{m}$ defined by the equation $\Tr(M)=t$.
\end{definition}
Notice that, if we fix $m \in \Z \setminus \{0\}$, then $m$ is invertible in $\Z[1/m]$, and hence $X_t^m$ makes sense as a scheme over $\Spec \Z[1/m]$. We will be able to reduce this scheme modulo any prime that does not divide $m$.

\subsection{Number of points of \texorpdfstring{$X^m_t$}{} over finite fields}
In this section we study the number of $\F_\ell$-points of $X_m^t$ (Theorem \ref{thm: counting symplectic matrices with given trace and multiplier} and Lemma \ref{lemma:counteq}) and show that a large proportion of them correspond to smooth points of $X_m^t$ (Lemma \ref{lemma:Xsmooth}).
For the first objective, our approach is inspired by \cite{MR1812305}. More precisely, the main result of \cite{MR1812305} gives a formula counting the number of elements in $\GSp_{2g}(\F_\ell)$ with given trace and \textit{determinant}. The same strategy allows us to prove the following version, where we count matrices with given trace and \textit{multiplier}. Before stating the result, 
we remind the reader that the $q$-binomial coefficient ${\displaystyle {\begin{bmatrix}n\\r\end{bmatrix}}_{q}}$ is defined as $\prod_{j=0}^{r-1} \frac{q^{n-j}-1}{q^{r-j}-1}$. For ease of comparison with \cite{MR1812305}, we adopt the same notation as in op.~cit.

\begin{theorem}\label{thm: counting symplectic matrices with given trace and multiplier}
    Let $q$ be a prime power, $\zeta \in \F_q^\times$, and $\eta \in \F_q$. Let
    \[
    T_m(\zeta, \eta) = q  \sum_{\alpha_1, \ldots, \alpha_m \in \F_q^\times} t\left( \alpha_1+\zeta\alpha_1^{-1} + \cdots + \alpha_m+\zeta\alpha_m^{-1} \right) - (q-1)^m,
    \]
    where
    \[
    t(x) = \begin{cases}
        1, \text{ if }x = \eta \\
        0, \text{ otherwise},
    \end{cases}
    \]
    and the sum is regarded as
    $t(0)$ for $m=0$. Let
    \[
    C(\zeta, \eta) \colonequals \left| \{ g \in \GSp_{2n}(\F_q) \bigm\vert \mult g = \zeta, \operatorname{tr} g = \eta \} \right| = \left| X_\eta^\zeta(\F_q) \right|.
    \]
    We have the following exact formula for $C(\zeta, \eta)$:

    \begin{equation}\label{eq: exact formula number of matrices}
    \begin{aligned}
        C(\zeta, \eta) = q^{n^2-1}\prod_{j=1}^n\left(q^{2j}-1\right) + E,
    \end{aligned}
    \end{equation}
    where 
    \begin{equation}\label{eq: E}
  E = q^{n^2-1} \sum_{b=0}^{\lfloor n/2 \rfloor} \left( q^{b^2+b} {\displaystyle {\begin{bmatrix}n\\2b\end{bmatrix}}_{q}} \prod_{j=1}^b (q^{2j-1}-1) \sum_{l=0}^{\lfloor n/2-b \rfloor} q^l R(n-2b+1,l)T_{n-2b-2l}(\zeta, \eta) \right),
    \end{equation}
    $R(m, l)$ denotes
    \[
    R(m, l)=\sum_{0 < j_1 < \cdots < j_l < m-l} \prod_{\nu=1}^l (q^{m-\nu-j_\nu}-1),
    \]
    and we set by convention $R(m, 0)=1$.
\end{theorem}
\begin{proof}
    The proof is virtually identical to that of \cite[Theorem 1]{MR1812305}: if one simply replaces every occurrence of $\det$ with $\mult$ in the proof of \cite[Theorem 1]{MR1812305} everything goes through without difficulty. 
    More precisely, let
    \[
    e(x) = \begin{cases}
        1 \text{ if } x = \zeta, \\
        0 \text{ otherwise}.
    \end{cases}
    \]
    Throughout the proof, several instances of $\det(d_\alpha)=\alpha^n$ are replaced by $\mult(d_\alpha)=\alpha$, where $d_\alpha = \begin{pmatrix}
        \operatorname{Id}_n & 0 \\
        0 & \alpha \operatorname{Id}_n
    \end{pmatrix}$. In particular, the sums $\sum_{\alpha \in \F_q^\times} e(\alpha^n)$ are replaced by $\sum_{\alpha \in \F_q^\times} e(\alpha)$. In the proof of \cite[Theorem 1]{MR1812305}, the sum $\sum_{\alpha \in \F_q^\times} e(\alpha^n)$ evaluates to the number $S$ of $n$-th roots of $\zeta$ in $\F_q^\times$; in our case, the sum $\sum_{\alpha \in \F_q^\times} e(\alpha)$ simply evaluates to $1$ for all $\zeta \in \F_q^\times$.
\end{proof}

We will think of the expression $E$ appearing in Equation \eqref{eq: E} as an error term. We now proceed to bound this error.
We work with a fixed value of $n$: this implies in particular that the number of summands (resp.~factors) in the sum (resp.~products) appearing in \eqref{eq: E} is $O(1)$. We then have the following estimates (where the implicit constants may depend on $n$, but not on $q$):
\begin{enumerate}
    \item ${\displaystyle {\begin{bmatrix}n\\r\end{bmatrix}}_{q}} = \prod_{j=0}^{r-1} \frac{q^{n-j}-1}{q^{r-j}-1} \ll \prod_{j=0}^{r-1} \frac{q^{n-j}}{q^{r-j}} = \prod_{j=0}^{r-1} q^{n-r} = q^{nr-r^2}$, and hence in particular ${\displaystyle {\begin{bmatrix}n\\2b \end{bmatrix}}_{q}} \ll q^{2bn-4b^2}$.
    \item $\prod_{j=1}^b (q^{2j-1}-1) \leq \prod_{j=1}^b q^{2j-1} = q^{\sum_{j=1}^b (2j-1)} = q^{b^2}$.
    \item We claim that $R(m, l) \ll q^{ml - l(l+1)}$ for $m\leq n$. To see this, notice that the length of the sum defining $R(m, l)$ is $O(1)$, so it suffices to estimate the largest summand. (The length of the sum is $O(1)$ because it is bounded by a function of $m$, and $m$ is bounded in terms of $n$.)
    Clearly the condition $j_k > j_{k-1}$ for $k=2, \ldots, l$ yields $j_\nu \geq \nu$, so $q^{m-\nu-j_\nu} \leq q^{m-2\nu}$. We can then estimate
    \[
    R(m, l) \ll \prod_{\nu=1}^l q^{m-2\nu} = q^{ml-l(l+1)},
    \]
    as claimed.
    \item We also claim that $|T_m(\zeta, \eta)| \ll q^m$. To show this, we first remark that, for fixed $\alpha_1, \ldots, \alpha_{m-1} \in \F_q^\times$, the equation
    \[
    \alpha_1+\zeta\alpha_1^{-1} + \cdots + \alpha_m+\zeta\alpha_m^{-1} = \eta
    \]
    has at most 2 solutions $\alpha_m \in \F_q^\times$.
    We can then rewrite and estimate $|T_m(\zeta, \eta)|$ as follows:
        \begin{align*}
    &\big| q \sum_{\alpha_1, \ldots, \alpha_{m-1} \in \F_q^\times} \sum_{\substack{\alpha_m \in \F_q^\times \\  \alpha_1+\zeta\alpha_1^{-1} + \cdots + \alpha_m+\zeta\alpha_m^{-1} = \eta}} 1 -(q-1)^m \big| \\&\leq  q  \cdot (q-1)^{m-1} \cdot 2 + (q-1)^m \ll q^m,
    \end{align*}
    as desired.
\end{enumerate}
We now give an upper bound for the quantity $|E|$, with $E$ as in Equation \eqref{eq: E}. 
According to our previous estimates,
\[
\begin{aligned}
\left|\sum_{l=0}^{\lfloor n/2-b \rfloor} q^l R(n-2b+1,l)T_{n-2b-2l}(\zeta, \eta) \right| &  \ll 
\sum_{l=0}^{\lfloor n/2-b \rfloor} q^l q^{(n-2b+1)l - l(l+1)} q^{n-2b-2l} \\
& \ll q^{n-2b} \sum_{l=0}^{\lfloor n/2-b \rfloor} q^{(n-2b-1)l - l^2}.
\end{aligned}
\]
Notice again that the length of this sum is $O(1)$, so it suffices to give an upper bound for its largest summand. For a fixed value of $b$, the exponent $(n-2b-1)l - l^2$ is maximal for $l=\frac{n-2b-1}{2}$ (which might not be an integer, but still provides an upper bound for the value of the exponent).
We thus get
\[
\left| \sum_{l=0}^{\lfloor n/2-b \rfloor} q^l R(n-2b+1,l)T_{n-2b-2l}(\zeta, \eta) \right| \ll q^{n-2b} q^{\left( \frac{n-2b-1}{2} \right)^2}.
\]
We now consider the expression
\begin{align*}
&\left| 
 q^{b^2+b} {\displaystyle {\begin{bmatrix}n\\2b\end{bmatrix}}_{q}} \prod_{j=1}^b (q^{2j-1}-1) \sum_{l=0}^{\lfloor n/2-b \rfloor} q^l R(n-2b+1,l)T_{n-2b-2l}(\zeta, \eta) \right| \\ & \ll q^{b^2+b} q^{2bn-4b^2} q^{b^2} q^{n-2b} q^{\left( \frac{n-2b-1}{2} \right)^2},
\end{align*}
corresponding to a fixed value of $b$ in the sum \eqref{eq: E}.
The exponent of $q$ on the right-hand side is again a quadratic function of $b$ (to be precise, it is given by $-b^2 + bn + \frac{1}{4}n^2 + \frac{1}{2}n + 1/4$), which is easily seen to achieve its maximum for $b=n/2$. This maximum value is given by $\frac{1}{2}n^2 + \frac{1}{2}n + \frac{1}{4}$. Thus, $q^{(1/2)n^2+(1/2)n+1/4}$ is an upper bound for each summand.
Keeping once again in mind that the length of the sum is $O(1)$, we have proved that \[|E|\ll q^{n^2-1} q^{\frac12n^2+\frac12n+\frac14} = q^{\frac{3}{2}n^2 + \frac{1}{2}n - \frac{3}{4}}.\] 

We can finally prove:
\begin{lemma} \label{lemma:counteq}For all $g \geq 2$, all primes $\ell$, and all $m$ with $(m, \ell)=1$ we have
    \begin{align}\label{eq: density modulo ell}
           \frac{\#X_t^m(\F_\ell)}{\#\GSp_{2g}(\F_\ell)/(\ell\varphi(\ell))}&= \frac{\# \{ M \in \GSp_{2g}(\F_\ell) : \Tr (M)=t, \mult M = m \} }{\#\GSp_{2g}(\F_\ell)/(\ell\varphi(\ell))} \\&= 1+ O(\ell^{-2})\nonumber,
    \end{align}
    where the constant implicit in the big-$O$ sign depends only on $g$.
\end{lemma}
\begin{proof}
The numerator of \eqref{eq: density modulo ell} is given by \eqref{eq: exact formula number of matrices} (with $n=g$, $q=\ell$, $\zeta=m$ and $\eta=t$). Note that $\ell^{g^2-1} \prod_{j=1}^g \left(\ell^{2j}-1 \right)$ is exactly $\frac{\#\GSp_{2g}(\F_\ell)}{\ell\varphi(\ell)}$. Thus, the ratio in \eqref{eq: density modulo ell} is given by
\[
1 + \frac{E}{\frac{1}{\ell(\ell-1)} \#\GSp_{2g}(\F_\ell)}.
\]
Since
\begin{align*}
\frac{1}{\ell(\ell-1)} \#\GSp_{2g}(\F_\ell) = \frac{1}{\ell(\ell-1)} (\ell-1) \#\Sp_{2g}(\F_\ell) = \ell^{g^2-1} \prod_{j=1}^g(\ell^{2j}-1) \gg \ell^{2g^2+g-1},
\end{align*}
we obtain that \eqref{eq: density modulo ell} is
\[
1+O\left( \ell^{\frac{3}{2}g^2 + \frac{1}{2}g -\frac{3}{4} - (2g^2+g-1)} \right) = 1+O\left( \ell^{-\frac{1}{2}g^2 - \frac{1}{2}g +\frac{1}{4}} \right),
\]
which is $1+O(\ell^{-2})$ for all $g \geq 2$.
\end{proof}

\begin{lemma}\label{lemma:Xsmooth}
Fix $t, m \in \mathbb{Z}$ and let $\ell \geq 3$ be a prime number not dividing $m$. 
Let
\[
    X \colonequals (X_t^m)_{\F_\ell} = \GSp_{2g, \F_\ell} \cap \{\Tr = t\} \cap \{\mult = m\},
\]
considered as a variety over $\F_\ell$. 
Write $X^{\smooth}$ for the smooth locus of $X$. The singular locus $X^{\sing}$ has codimension at least $3$ in $X$. We have $\#X^{\sing}(\F_\ell) = O(\ell^{2g^2+g-4})$ and
\[
\#X^{\smooth}(\F_\ell)=\frac{\#\GSp_{2g, \F_\ell}(\F_\ell)}{\ell \varphi(\ell)} (1 + O(\ell^{-2})).
\]
The implied constants depend on $t$ and $m$, but not on $\ell$.
\end{lemma}
\begin{proof}
We view $X$ as a subvariety of the affine space $\mathbb{A}_{\F_\ell}^{(2g)^2}$, considered as the space of matrices of size $2g \times 2g$. The variety $X$ is the intersection of $\GSp_{2g, \F_\ell}^{m} \cong \Sp_{2g, \F_\ell}$ with the hyperplane $H$ defined by the condition $\Tr(M) = t$. The hyperplane section $\GSp_{2g, \F_\ell}^{m} \cap H$ is smooth at a point $x \in X(\overline{\F_\ell})$ unless the (tangent space to the) hyperplane $H$ contains the tangent space of $\GSp_{2g, \overline{\F_\ell}}^{m}$ at the point $x$. Take any point $x \in X(\overline{\F_\ell})$.
Since $x$ has multiplier $m$, left multiplication by $x \in \GSp_{2g}(\overline{\F_\ell})$ gives an isomorphism $L_x$ between $\Sp_{2g, \overline{\F_\ell}}$ and $\GSp_{2g, \overline{\F_\ell}}^{m}$. The differential of $L_x$ gives an isomorphism between the tangent space at $\Id$ and the tangent space at $x$. If we identify both tangent spaces to subspaces of the tangent space to $\mathbb{A}_{\overline{\F_\ell}}^{(2g)^2}$ (that is, to matrices of size $2g \times 2g$), the differential in question is simply multiplication by $x$ itself. Thus, we may view the tangent space at $x$ as the image via $x$ of the tangent space at $\Id$, which is the Lie algebra of $\Sp_{2g, \overline{\F_\ell}}$. This can be written down explicitly: choose the anti-symmetric bilinear form represented by the matrix
\[
\Omega \colonequals \begin{pmatrix}
    0 & \operatorname{Id}_g \\
    -\operatorname{Id}_g & 0
\end{pmatrix}.
\]
Differentiating the condition ${}^tM \Omega M = \Omega$, we find that the Lie algebra of $\Sp_{2g, \overline{\F_\ell}}$ is given by those matrices $M$ that satisfy ${}^tM\Omega + \Omega M = 0$. Writing $M$ in block form, we obtain that
$\operatorname{Lie} \Sp_{2g, \overline{\F_\ell}}$ is the vector space of $\overline{\F_\ell}$-matrices
\[
\begin{pmatrix}
A & B \\ C & D
\end{pmatrix}
\]
with ${}^t B= B, {}^tC = C, {}^tD = -A$ (see \cite[\S 16.1]{MR1153249} for the identical calculation over the complex numbers). From the previous arguments, it follows that $x$ can only be a singular point if
\[
x \operatorname{Lie}(\Sp_{2g, \overline{\F_\ell}}) \subseteq \{\Tr = 0\},
\]
which is to say
\[
\Tr (xL) = 0 \quad \forall L \in \operatorname{Lie}(\Sp_{2g, \overline{\F_\ell}}).
\]
Write $x = \begin{pmatrix}
\alpha & \beta \\ \gamma & \delta
\end{pmatrix}$ and $L=\begin{pmatrix}
A & B \\ C & D
\end{pmatrix}$ with $B, C$ symmetric and $D=-{}^tA$. This easily gives $\Tr (\beta C) = \Tr(\gamma B) = 0$ for all symmetric $B, C$ (which implies that $\beta, \gamma$ are anti-symmetric) and
\[
\Tr(\alpha A - \delta \cdot {}^t A ) = \Tr(\alpha A - A \cdot {}^t \delta ) = \Tr(\alpha A - {}^t \delta \cdot A  ) = 0
\]
for all $A$ (which implies $\alpha={}^t\delta$).

Thus, the locus of non-smooth points is contained in the linear space defined by the equations \[{}^t\beta=-\beta, {}^t\gamma=-\gamma, {}^t\delta=\alpha.\] This linear space has dimension $g^2 + 2 \frac{g(g-1)}{2} = 2g^2-g$, and hence codimension at least $2g-1 \geq 3$ in $X$, each of whose irreducible components has dimension at least $\dim \GSp_{2g, \F_\ell}^{m}-1 = \dim \Sp_{2g, \F_\ell}-1 = 2g^2+g-1$ (at least one irreducible component has exactly this dimension). 
We now observe that
by the Lang-Weil estimates \cite[Theorem 1]{MR0065218} we have $\#X^{\sing}(\F_\ell) = O( \ell^{\dim X^{\sing}} ) = O(\ell^{\dim X - 3})$, 
with an implicit constant that depends only on $X$ and not $\ell$.
Taking into account the obvious decomposition $X^{\smooth}(\F_\ell) \bigsqcup X^{\sing}(\F_\ell) = X(\F_\ell)$ and the fact that
$
\#X(\F_\ell) = \frac{\#\GSp_{2g, \F_\ell}(\F_\ell)}{\ell \varphi(\ell)} (1 + O(\ell^{-2}))
$
by Lemma \ref{lemma:counteq}, we
obtain the desired
estimate $\#X^{\smooth}(\F_\ell) = \frac{\#\GSp_{2g, \F_\ell}(\F_\ell)}{\ell \varphi(\ell)} (1 + O(\ell^{-2})).$
\end{proof}

\subsection{Convergence of the infinite product \texorpdfstring{\eqref{eq: guess for the density}}{}}
 	\begin{lemma}\label{lemma:O(l-2)}
        Let $g\geq 2$, $q$ be a prime power, and $t\in \Z$. Let $\ell\geq 3$ be a prime that does not divide $q$. We have
		$\nu_{\ell}(q, t)=1+O(\ell^{-2})$, where the implied constant depends on $g$, $q$, and $t$.
	\end{lemma}
	\begin{proof}
Let $X:=X_t^q$. 
We apply \cite[Property (U), page 326]{MR0656627} to
		\[
		X(\Z_\ell) = \{ M \in \operatorname{GSp}_{2g}(\Z_\ell) : \Tr M =t, \mult M = m q \}
		\]
		\[
		m=1, \quad N = (2g)^2, \quad  n=n, \quad B = x_0 + \ell \mathbb{Z}_\ell^{(2g)^2}
		\]
where $x_0 \bmod \ell$ is a matrix lying in $X^{\sing}(\F_\ell)$. 
We first assume that $X_{\Z_\ell}$ is irreducible. Considering $X$ as a scheme over the spectrum of the DVR $\Z_\ell$, \cite[\href{https://stacks.math.columbia.edu/tag/0B2J}{Lemma 0B2J}]{stacks-project} shows that $X_{\F_\ell}$ is equidimensional of some dimension $d$, and Oesterlé's result gives
		\[
		\#\{ \text{closed balls $A$ of radius $\ell^{-n}$} : A \cap X \neq \emptyset \text{ and } A \subseteq B  \} \leq C \ell^{\dim X (n-1)}
		\]
for a constant $C$ that depends only on the degree in dimension $d$ \cite[§0.6]{MR0656627} of $X_{\F_\ell}$, which is clearly bounded independently of $\ell$.
On the other hand, we have
		\begin{align*}
		&\#\{ \text{closed balls A of radius $\ell^{-n}$} : A \cap X \neq \emptyset \text{ and } A \subseteq B  \} \\= &\# \left\{ M \in \GSp_{2g}(\Z/\ell^n\Z) : \begin{array}{c} \exists \tilde{M} \in X(\Z_\ell) \\ \tilde{M} \equiv M \pmod{\ell^n} \\ M \equiv x_0 \pmod{\ell}
		\end{array}
		 \right\}.
		\end{align*}
Hence, summing over the points $x_0 \in X^{\sing}(\F_\ell)$ we obtain
  \begin{equation}\label{eq: lifting non-smooth points}
 \# \left\{ M \in \GSp_{2g}(\Z/\ell^n\Z) : \begin{array}{c} \exists \tilde{M} \in X(\Z_\ell) \\ \tilde{M} \equiv M \pmod{\ell^n} \\ M \bmod \ell \in X^{\sing}(\F_\ell)
		\end{array}
		 \right\} \leq C \#X^{\sing}(\F_\ell) \ell^{(n-1) \dim X}.
   \end{equation}

If $X_{\Z_\ell}$ is not irreducible, we can repeat the above argument with each irreducible component $X_i$. If $C_i$ is the constant that corresponds to the component $X_i$, {applying the previous argument to $X_i$ and summing over $i$ we obtain}
\[
  \begin{aligned}
 \# \left\{ M \in \GSp_{2g}(\Z/\ell^n\Z) : \begin{array}{c} \exists \tilde{M} \in X(\Z_\ell) \\ \tilde{M} \equiv M \pmod{\ell^n} \\ M \bmod \ell \in X^{\sing}(\F_\ell)
		\end{array}
		 \right\} 
   & \leq \left(\sum_i C_i\right)\#X^{\sing}(\F_\ell) \ell^{(n-1) \dim X}.
   \end{aligned}
   \]
   
Note that the number of irreducible components is bounded independently of $\ell$, and so is the constant $\left(\sum_{i} C_i\right)$ (because the degrees are bounded in terms of the equations of $X$, which are independent of $\ell$). 
The conclusion is that there exists a constant $C$ such that \eqref{eq: lifting non-smooth points} holds for all $n$ and all but finitely many $\ell$.

Recall now the definition of $\nu_\ell(q, t)$ from Equation \eqref{eq: nu ell q t}: it is the limit over $k$ of the ratio
\begin{equation}\label{eq: nu ell q t at level k}
\frac{ \# \operatorname{Im}\left( X(\Z_\ell) \to \operatorname{GSp}_{2g}(\Z/\ell^k\Z) \right)  }{\#\GSp_{2g}(\Z/\ell^k\Z) / (\ell^k\varphi(\ell^k))}
\end{equation}
Clearly, a matrix $M$ counted in the numerator of this expression in particular reduces modulo $\ell$ to a point in $X(\F_\ell)$. For a fixed $x_0 \in X(\F_\ell)$, denote by $N(x_0, k)$ the quantity
\[
    N(x_0, k) = \#\left\{
    M \in \operatorname{Im}\left( X(\Z_\ell) \to \operatorname{GSp}_{2g}(\Z/\ell^k\Z) \right) : M \equiv x_0 \pmod{\ell}
     \right\}
\]
When $x_0$ is a smooth point of $X(\F_\ell)$, Hensel's lemma shows that $x_0$ has precisely $\ell^{(k-1)\dim X_{\F_\ell}}$ lifts to $X(\Z/\ell^k\Z)$, and each of these further lifts to a point in $X(\Z_\ell)$ (note that a smooth point necessarily lies on a component of dimension equal to $\dim X_{\F_\ell}$: indeed, $X$ is a hyperplane section of a smooth variety, so every smooth point lies on a component of maximal dimension). Therefore, we have $N(x_0, k)=\ell^{(k-1)\dim X_{\F_\ell}}$ for such $x_0$. On the other hand, Equation \eqref{eq: lifting non-smooth points} and Lemma \ref{lemma:Xsmooth} show that $\sum_{x_0 \in X^{\sing}(\F_\ell)} N(x_0, k) = O(\ell^{k \dim X_{\F_\ell}-3})$.

Thus, the numerator of \eqref{eq: nu ell q t at level k} is given by
\[
\begin{aligned}
  \sum_{x_0 \in X(\F_\ell)} N(x_0, k) & = \sum_{x_0 \in X^{\smooth}(\F_\ell)} N(x_0, k)  + \sum_{x_0 \in X^{\sing}(\F_\ell)} N(x_0, k) \\
   & = \#X^{\smooth}(\F_\ell) \ell^{(k-1)\dim X_{\F_\ell}} + O(\ell^{k \dim X_{\F_\ell}-3})  \\
   & = \frac{\#\GSp_{2g}(\F_\ell)}{\ell\varphi(\ell)} (1+O(\ell^{-2})) \cdot \ell^{(k-1)\dim X_{\F_\ell}}  + O(\ell^{k \dim X_{\F_\ell}-3}),
\end{aligned}
\]
where in the last equality we have applied Lemma \ref{lemma:Xsmooth}. Using $\dim X_{\F_\ell} = \dim \GSp_{2g, \F_\ell} - 2$ and dividing by
\[
\frac{\#\GSp_{2g}(\Z/\ell^k\Z)}{\ell^k\varphi(\ell^k)} = \frac{\#\GSp_{2g}(\F_\ell)}{\ell\varphi(\ell)} \ell^{(k-1) \dim X_{\F_\ell}},
\]
we obtain that \eqref{eq: nu ell q t at level k} is $1+O(\ell^{-2})$. The claim follows upon passing to the limit in $k$.
	\end{proof}
\begin{theorem}
Let $q$ be a prime power and $t \in \mathbb{Z}$. The infinite product
    \[\nu(q,t)=\nu_\infty(q, t)\prod_{\ell<\infty}\nu_{\ell}(q, t)\]
    converges.
\end{theorem}
\begin{proof}
    By Lemma \ref{lemma:O(l-2)} we have $\nu_{\ell}(q, t)=1+O(\ell^{-2})$ as $\ell$ ranges over primes $\ell \geq 3$ that do not divide $q$. The factors $\nu_\infty(q,t)$, $\nu_2(q, t)$ and $\nu_p(q,t)$ are well-defined, as already argued. It follows that the infinite product $\prod_{\ell<\infty}\nu_{\ell}(q, t)$ converges.
\end{proof}

We conclude this section by proving that $\nu(q,t)$ is strictly positive for $t \in \Z$ lying in the interval $(-2g\sqrt{q},2g\sqrt{q})$. This also proves that the denominator in Equation \eqref{eq:nu'} is non-zero and that $\nu'(q,t)$ is strictly positive for $t \in \Z$ lying in the interval $(-2g\sqrt{q},2g\sqrt{q})$.
\begin{lemma}\label{lemma: nu q t is non-zero}
Let $t$ be an integer in the open interval $(-2g\sqrt{q},2g\sqrt{q})$. The quantity $\nu(q,t)$ is non-zero (hence strictly positive).
\end{lemma}
\begin{proof}Since the infinite product defining $\nu(q,t)$ converges, it suffices to show that each factor in this product is non-zero. This is well-known to be true for the infinite factor $\nu_\infty(q,t)$, whose support is the interval $[-2g\sqrt{q}, 2g\sqrt{q}]$. To show that $\nu_\ell(q,t)$ is non-zero (including for $\ell= p$) we proceed as follows. Let $X_t^q$ be as in Definition \ref{def: X_t^m} (for the ring $R=\Q_\ell$) and let for simplicity $X^q := \GSp_{2g, \Q_\ell}^{q}$. We rewrite the definition of $\nu_\ell(q,t)$ in the form of Remark \ref{rmk: local factor at q}, namely,
\[
        \nu_{\ell}(q, t) = \lim_{k \to \infty} \frac{\#\operatorname{Im}\left( X^q_t(\Q_\ell) \cap \operatorname{Mat}_{2g}(\Z_\ell) \to \operatorname{Mat}_{2g}(\Z/\ell^k\Z) \right)}{\#\operatorname{Im}\left( X^q(\Q_\ell) \cap \operatorname{Mat}_{2g}(\Z_\ell) \to \operatorname{Mat}_{2g}(\Z/\ell^k\Z) \right) / \ell^k}.
\]
Set $d := \dim \GSp_{2g,\Q_\ell}-2 = 2g^2+g-1$ and multiply both numerator and denominator by $\ell^{-kd}$. We see both $X^q$ and $X^q_t$ as subschemes of $\mathbb{A}_{\Q_\ell}^{(2g)^2}$, so that their $\Q_\ell$-points are subsets of $\Q_\ell^{(2g)^2}$.
Let $Y_t^q := X_t^q(\Q_\ell) \cap \Z_\ell^{(2g)^2}$ and $Y^q := X^q(\Q_\ell) \cap \Z_\ell^{(2g)^2}$.
The sets $Y_t^q$ and $Y^q$ are closed analytic subsets of $\Z_\ell^{(2g)^2}$. Note that $X^q$ is smooth and irreducible of dimension $d+1$, hence $X_t^q$ -- which is a subscheme of $X^q$ defined by a single non-trivial equation -- has dimension $d$: slicing with a hyperplane makes the dimension drop at most by $1$; on the other hand, the dimension \textit{must} drop (if $X_t^q$ had a component of dimension $d+1$, by the irreducibility of $X^q$ we would have $X_t^q \supseteq X^q$, which is not the case). More precisely, by the same argument, every irreducible component of $X_t^q$ has dimension $d$. We can thus write
\begin{equation}\label{eq: rewriting nu ell q t}
\nu_\ell(q,t) = \lim_{k \to \infty} \frac{\ell^{-dk} \#\operatorname{Im}(Y_t^q \to (\Z/\ell^k\Z)^{(2g)^2})}{\ell^{-(d+1)k} \#\operatorname{Im}(Y^q \to (\Z/\ell^k\Z)^{(2g)^2})}.
\end{equation}

Recall from \cite[\S 3]{MR0656627} the notion of \textit{measure in dimension $d$} of a closed analytic subset $Y$ of $\Z_\ell^{(2g)^2}$ of dimension $\leq d$ (denoted by $\mu_d(Y)$). By \cite[Théorème 2]{MR0656627}, the numerator and denominator of \eqref{eq: rewriting nu ell q t} admit limit as $k \to \infty$, and these limits are given by $\mu_d(Y_t^q)$ and $\mu_{d+1}(Y^q)$, respectively. Hence, $\nu_\ell(q,t) = \frac{\mu_d(Y_t^q)}{\mu_{d+1}(Y^q)}$.

To conclude, it suffices to show that $\mu_{d+1}(Y^q)$ and $\mu_d(Y_t^q)$ are both strictly positive. 
Note that $Y^q$ is open in $X^q(\Q_\ell)$ for the $\ell$-adic topology, since it is the intersection of $X^q(\Q_\ell)$ with the $\ell$-adically open set $(\Z_\ell)^{(2g)^2}$; a similar comment applies to $X_t^q$. {We claim that to check the positivity of $\mu_{d+1}(Y^q)$ and $\mu_d(Y_t^q)$ it suffices to show that $Y^q, Y_t^q$ contain at least one smooth point of $X^q(\Q_\ell), X_t^q(\Q_\ell)$ respectively. To show this implication, we argue as follows (we discuss the case of $X^q_t$, but the case of $X^q$ is completely analogous, and in fact easier since $X^q$ is smooth). The $\ell$-adic analytic variety $X^q_t(\Q_\ell)$ is of pure dimension $d$, and its smooth points $(X^q_t)_{\operatorname{smooth}}$ form an $\ell$-adically open set, so the intersection $Y^q_t \cap (X^q_t)_{\operatorname{smooth}}$ is $\ell$-adically open (recall that $Y^q_t$ is $\ell$-adically open).
In particular, if $Y^q_t$ contains at least one smooth point of $X^q_t(\Q_\ell)$, then it contains an open set of smooth points. The local dimension at each smooth point of $X^q_t(\Q_\ell)$ is $d$. By construction of the measure $\mu_{d}$ (see again \cite[\S 3]{MR0656627}), an open subset of $X^q_t(\Q_\ell)$ consisting of smooth points has positive measure: indeed, in the case of constant dimension $d$ that we are considering here, $\mu_{d}$ is constructed locally by taking an analytic isometry between a ball in $(X^q_t)_{\operatorname{smooth}}$ and an open ball in $\mathbb{Q}_\ell^{d}$, and pulling back the Haar measure $\nu$ of $\mathbb{Q}_\ell^{d}$, normalised by $\nu(\Z_\ell^{d})=1$. It is then clear that any open set in $(X^q_t)_{\operatorname{smooth}}$ has positive measure with respect to $\mu_{d}$, and we have shown that $Y^q_t$ contains an open set of smooth points of $X^q_t(\Q_\ell)$ as soon as it contains one. We are thus reduced to checking that $Y^q, Y_t^q$ contain at least one smooth point of $X^q(\Q_\ell), X_t^q(\Q_\ell)$ respectively.}

For $X^q$, which is smooth, this amounts to constructing a symplectic matrix with coefficients in $\Z_\ell$ and given multiplier; this
follows immediately from either Proposition \ref{prop: Kirby-Rivin for general symplectic group} and Remark \ref{rmk: integral matrices} or from Remark \ref{remark:shiftingGsp} after observing that the identity matrix lies in $\Sp_{2g}(\Z_\ell)$. For $X^q_t$ we construct the relevant point explicitly.

We observe that $X^q_t$ arises as a fibre of the trace map:
$$
\text{trace}: X^q \rightarrow \mathbb{A}^1
$$
i.e., $X^q_t=\text{trace}^{-1}(t)$. A sufficient condition for a point $P \in X^q_t$ to be smooth is the existence of a curve $C \subseteq X^q$ containing $P$ such that the restriction of the trace map
$$
\text{trace}: C \rightarrow \mathbb{A}^1
$$
has non-vanishing differential at $P$. 
To see this, notice that the dimension of the tangent space at $P$ in $X^q_t$ is the dimension of the tangent space at $P$ in $X^q$ minus the dimension of the image of the differential of the trace map (restricted to $X^q$) at $P$. Let us fix the symplectic form
\[
\Omega= \begin{pmatrix}
    0 & \operatorname{Id}_g \\
    -\operatorname{Id}_g & 0
\end{pmatrix}.
\]
We consider the curve $M_a$, parametrised by $a \in \mathbb{A}^1$, given by
\[
M_a=
\begin{bmatrix}
a & z & a-q & z \\
{}^t z & q\operatorname{Id}_{g-1} & {}^t z & 0_{g-1} \\
1 & z & 1 & z \\
{}^t z & 0_{g-1} & {}^t z & \operatorname{Id}_{g-1} 
\end{bmatrix}
\]
where $z$ is the $1 \times (g-1)$ vector $(0,\ldots,0)$.  One checks that $M_a \in X^q(\Q_\ell)$: up to a suitable change of basis, the symplectic form is represented by $\operatorname{diag}\left( \begin{pmatrix}
    0 & 1 \\ -1 & 0
\end{pmatrix}, \ldots, \begin{pmatrix}
    0 & 1 \\ -1 & 0
\end{pmatrix} \right)$, and in the same basis $M_a$ becomes the matrix $\operatorname{diag}\left( \begin{pmatrix}
    a & a-q \\ 1 & 1
\end{pmatrix}, \begin{pmatrix}
    1 & 0 \\ 0 & q
\end{pmatrix}, \ldots, \begin{pmatrix}
    1 & 0 \\ 0 & q
\end{pmatrix} \right)$, which is manifestly symplectic since every $2 \times 2$ block has determinant $q$. Moreover, $\text{trace}(M_a)=a+qg-q+g$; the composition
$$
a \rightarrow M_a \rightarrow \text{trace}(M_a)=a+qg-q+g
$$
is just a translation of $\mathbb{A}^1$, which implies that the differential of the trace map at $M_a$ is surjective. Therefore, the point $M_{t-qg+q-g} \in X^{q}_t$ is smooth and its entries are elements of $\mathbb{Z}_{\ell}$. This concludes the proof.

\end{proof}

\section{Proof of Theorem \texorpdfstring{\ref{thm:OddCharacteristic}}{}}\label{sec:poddeven}
The goal of this section is to show that the set $\mathcal{P}_g(\mathbb{F}_q)$ of Definition \ref{def:Pg} spans a $\Q$-vector space of dimension $g+1$ for all pairs $(g,q)$. For a fixed genus $g$ and $q \gg_g 1$, this follows from Theorem \ref{thm: equidistribution of charpolys} (see Remark \ref{rmk: equidistribution implies Kaczorowski-Perelli}). Studying more precisely the set $\mathcal{P}_{g, 2}(\mathbb{F}_q)$ for every fixed value of $q$, we prove the statement for all $q$ and $g$. Recall that $\mathcal{P}_g(\mathbb{F}_q)$ is defined in Definition \ref{def:Pg} and $\mathcal{P}_{g, 2}(\mathbb{F}_q)$ is its reduction modulo $2$. As we pointed out in the introduction, we split our proof of Theorem \ref{thm:OddCharacteristic} into two parts, one for the case $p$ odd and one for the case $p=2$, since the properties of the $2$-torsion points are slightly different when the characteristic is odd or even.

\subsection{Proof of Theorem \texorpdfstring{\ref{thm:OddCharacteristic}}{}: \texorpdfstring{$p$}{}  odd}\label{sec:podd}

Throughout this section, the prime $p=\operatorname{char}(\mathbb{F}_q)$ is assumed to be odd. 
Thanks to Theorem \ref{GF-formula}, it makes sense to define $f_{C}(t) \in \mathbb{Z}[t]$ as $f_{C, \ell^\infty}(t)$, where $\ell$ is any prime different from $p$; from now on, we shall choose $\ell=2$.
This choice has the additional advantage that working modulo $2$ makes the connection between the $L$-polynomial and the characteristic polynomial of Frobenius particularly simple:
\begin{corollary}\label{cor:LPolyFPoly}
We have $P_{C}(t) \equiv f_{C}(t) \pmod 2$.
\end{corollary}
\begin{proof}
Write $P_{C}(t)=\sum_{i=0}^{2g} a_i t^i \in \mathbb{Z}[t]$ and $f_{C}(t)=\sum_{i=0}^{2g} b_i t^i$.
By Theorem \ref{GF-formula} we have the equality $b_i=a_{2g-i}$, and since $q$ is odd we also have 
$
b_i=a_{2g-i}=q^{g-i}a_i \equiv a_i \pmod 2
$.
\end{proof}
We now recall a concrete description for the vector space of 2-torsion points of a hyperelliptic Jacobian, at least in the case when the hyperelliptic model is given by a polynomial of odd degree.
Let $f(x) \in \mathbb{F}_q[x]$ be a separable polynomial of degree $2g+1$ and let $C/\mathbb{F}_q$ be the unique smooth projective curve birational to the affine curve
$
y^2=f(x).
$
Furthermore, let $J/\mathbb{F}_q$ be the Jacobian of $C$ and $\{\alpha_1,\ldots,\alpha_{2g+1}\}$ be the set of roots of $f(x)$ in $\overline{\mathbb{F}_q}$.
Then for $i=1,\ldots,2g+1$ we have a point $(\alpha_i,0) \in C(\overline{\mathbb{F}_q})$; also notice that $C$, being given by an odd-degree model, has a unique point at infinity, which we denote by $\infty$. We denote by $R_i = [(\alpha_i,0)-\infty]$ the classes of the divisors $Q_i=(\alpha_i,0)-\infty$ in $J(\overline{\F_q})$. We then have the following well-known description for the $2$-torsion of $J$ (see for example \cite[Section 4]{MR3058664}):

\begin{lemma}\label{lemma: description of 2-torsion}
The following hold:
\begin{enumerate}
\item Each of the divisor classes $R_i \in J(\overline{\mathbb{F}_q})$ represents a point of order 2. 
\item The classes $R_i$ span $J[2]$.
\item The only linear relation satisfied by the $R_i$ is $R_1+\cdots+R_{2g+1}=0$.
\end{enumerate}
\end{lemma}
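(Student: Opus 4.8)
The whole argument hinges on the hyperelliptic involution $\iota \colon (x,y) \mapsto (x,-y)$, which fixes each Weierstrass point $(\alpha_i,0)$ as well as the point $\infty$. For part (1), I would compute the divisor of the rational function $x-\alpha_i$ on $\mathcal{C}$: it vanishes exactly at $(\alpha_i,0)$, and because this is a ramification point of the degree-$2$ map $x \colon \mathcal{C} \to \mathbb{P}^1$ it does so to order $2$, while its only pole is a double pole at $\infty$. Hence $\operatorname{div}(x-\alpha_i) = 2(\alpha_i,0) - 2\infty$, so that $2R_i = [0]$. Since $g \geq 1$ the curve is not rational, so the degree-$0$ divisor $(\alpha_i,0)-\infty$ of two distinct points cannot be principal; thus $R_i \neq [0]$ and $R_i$ has order exactly $2$.

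For parts (2) and (3), observe that since all the $R_i$ lie in the $\mathbb{F}_2$-vector space $J[2]$, every linear relation among them is recorded by a subset $S \subseteq \{1,\dots,2g+1\}$, and $\sum_{i\in S} R_i = [0]$ precisely when the divisor $D_S := \sum_{i\in S}(\alpha_i,0) - |S|\,\infty$ is principal, say $D_S = \operatorname{div}(h)$. The plan is to pin down all such $S$ using $\iota$. Because $D_S$ is $\iota$-invariant, $\operatorname{div}(\iota^* h) = \iota^* D_S = D_S$, so $\iota^* h = c\,h$ for a constant $c$ with $c^2=1$, i.e. $c=\pm 1$. The function field of $\mathcal{C}$ splits as $\overline{\mathbb{F}_q}(x) \oplus y\,\overline{\mathbb{F}_q}(x)$ into the $(+1)$- and $(-1)$-eigenspaces of $\iota^*$, so $h$ lies in one of these two pieces.

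I would then treat the two cases separately. If $c=+1$ then $h=g(x)$ is a rational function of $x$ alone, whose divisor on $\mathcal{C}$, being pulled back from $\mathbb{P}^1$, has even multiplicity at every Weierstrass point; comparing with $D_S$, whose multiplicity at each $(\alpha_i,0)$ with $i\in S$ equals $1$, forces $S=\emptyset$. If $c=-1$ then $h=b(x)\,y$; using $\operatorname{div}(y) = \sum_{i=1}^{2g+1}(\alpha_i,0) - (2g+1)\infty$ (which follows by halving $\operatorname{div}(y^2) = \operatorname{div} f(x)$), the divisor $\operatorname{div}(b(x)) = D_S - \operatorname{div}(y)$ must again have even multiplicity at each Weierstrass point, while it carries multiplicity $-1$ at each $(\alpha_i,0)$ with $i\notin S$, forcing the complement of $S$ to be empty, i.e. $S=\{1,\dots,2g+1\}$. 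Taking $h=y$ shows this relation does hold, which is exactly statement (3). Finally, (2) is a dimension count: the $2g+1$ classes $R_i$ satisfy a single independent relation, hence span a subspace of $J[2]$ of dimension $2g$; since $q$ is odd we have $\dim_{\mathbb{F}_2} J[2] = 2g$, so the $R_i$ span all of $J[2]$.

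The main obstacle is the verification, in both eigenspace cases, that a principal divisor supported only on the Weierstrass points and $\infty$ must have even local multiplicity at each Weierstrass point. This parity constraint, coming from the ramification of $x \colon \mathcal{C} \to \mathbb{P}^1$, is precisely what rules out every nontrivial relation apart from the total one, and it is the crux of part (3); everything else is bookkeeping with divisors.
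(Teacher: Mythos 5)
Your proof is correct and complete: part (1) via $\operatorname{div}(x-\alpha_i)=2(\alpha_i,0)-2\infty$, part (3) via the eigenspace decomposition of the function field under the hyperelliptic involution together with the parity constraint on multiplicities of pulled-back divisors at ramification points, and part (2) by the dimension count $\dim_{\mathbb{F}_2}J[2]=2g$ (valid since $p$ is odd). The paper itself offers no proof of this lemma --- it records it as well-known and points to the literature --- and your argument is precisely the standard one found in those references, so there is nothing to compare beyond noting that you have supplied the omitted details correctly.
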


We can now compute the action of Frobenius on the 2-torsion points of $C$. A similar result appeared independently in \cite[Proposition 2.4]{zbMATH07912221}.

\begin{lemma}\label{lemma: charpoly mod 2}
With notation as above, write $f(x)=\prod_{i=1}^r f_i(x)$ for the factorisation of $f(x)$ as a product of irreducible polynomials in $\mathbb{F}_q[x]$, and let $d_i=\deg(f_i)$. Let $\rho_2 : \Gal(\overline{\mathbb{F}_q}/\mathbb{F}_q) \to \Aut_{\mathbb{F}_2}(J[2])$ be the Galois representation attached to the 2-torsion points of $J$. Then
\[
f_{C,2}(t) =\det( t\operatorname{Id}- \rho_2(\Frob))=(t-1)^{-1}  \prod_{i=1}^r (t^{d_i}-1) \in \mathbb{F}_2[t].
\]
\end{lemma}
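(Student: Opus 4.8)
The plan is to realise $J[2]$ as a quotient of the $\mathbb{F}_2$-vector space freely spanned by the symbols $R_1,\dots,R_{2g+1}$ and to track the Frobenius action through this presentation. Concretely, I would introduce the permutation module $V=\mathbb{F}_2^{\,2g+1}$ with basis $e_1,\dots,e_{2g+1}$, together with the linear map $\pi\colon V \to J[2]$ sending $e_i\mapsto R_i$. By the preceding lemma, $\pi$ is surjective and its kernel is spanned by $w:=\sum_{i=1}^{2g+1} e_i$, so we obtain a short exact sequence $0\to \langle w\rangle \to V \xrightarrow{\pi} J[2]\to 0$ of $\mathbb{F}_2$-vector spaces.

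The next step is to identify the Frobenius action on $V$. Since $f\in\mathbb{F}_q[x]$, the Frobenius $\Frob$ permutes the roots $\alpha_i$; and because $\infty$ is $\mathbb{F}_q$-rational, it sends the class of $(\alpha_i,0)-\infty$ to the class of $(\alpha_i^q,0)-\infty$, so $\Frob(R_i)=R_{\sigma(i)}$ for the permutation $\sigma$ that $\Frob$ induces on the roots. Lifting to $V$, Frobenius therefore acts as the permutation matrix of $\sigma$, and $\pi$ intertwines this action with the action of $\Frob$ on $J[2]$. Because the roots of each irreducible factor $f_i$ form a single Frobenius orbit of size $d_i=\deg f_i$ (and since $f$ is separable every $m_i$ equals $1$), the permutation $\sigma$ is a product of $r$ disjoint cycles of lengths $d_1,\dots,d_r$. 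As the characteristic polynomial of a single $d$-cycle is $t^d-1$, block-diagonality yields $\det(t\operatorname{Id}-\Frob\mid V)=\prod_{i=1}^r (t^{d_i}-1)$.

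Finally I would combine these two observations. The vector $w$ is fixed by $\sigma$, so $\langle w\rangle$ is a Frobenius-stable line on which Frobenius acts trivially, contributing the factor $t-1$; moreover $w\neq 0$ in $V$, so this line is genuinely one-dimensional and $\dim_{\mathbb{F}_2}J[2]=2g$, as expected. Multiplicativity of characteristic polynomials along the short exact sequence then gives
\[
f_{\mathcal{C},2}(t)=\det(t\operatorname{Id}-\rho_2(\Frob))=\frac{\det(t\operatorname{Id}-\Frob\mid V)}{\det(t\operatorname{Id}-\Frob\mid \langle w\rangle)}=(t-1)^{-1}\prod_{i=1}^r (t^{d_i}-1),
\]
which is the claimed formula (with all $m_i=1$). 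I expect the only genuinely delicate points to be bookkeeping ones: verifying that the sequence $0\to\langle w\rangle\to V\to J[2]\to 0$ is Frobenius-equivariant, so that multiplicativity of the characteristic polynomial legitimately applies, and confirming that $w$ is nonzero over $\mathbb{F}_2$, so that dividing by $t-1$ reflects an honest quotient rather than introducing a spurious factor.
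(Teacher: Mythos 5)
Your proposal is correct and follows essentially the same route as the paper: both realise $J[2]$ as the quotient of the permutation module on the Weierstrass points by the line spanned by their sum, compute the characteristic polynomial of Frobenius on the permutation module from the cycle type induced on the roots, and divide out the factor $t-1$ coming from the fixed vector. Your added remark that separability forces every $m_i=1$ is accurate (the paper carries the exponents $m_i$ through the computation anyway, but they are all $1$ in the setting of the lemma).
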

\begin{proof}
As above, let $\infty$ be the unique point at infinity of $C$, and for $i=1,\ldots,2g+1$ let $Q_i=(\alpha_i,0)-\infty \in \operatorname{Div}_{C}(\overline{\mathbb{F}_q})$. Write $P_i$ for the image of $Q_i$ in the $\mathbb{F}_2$-vector space $\operatorname{Div}_{C}(\overline{\mathbb{F}_q})\otimes \mathbb{F}_2$, and let $V$ be the $(2g+1)$-dimensional $\mathbb{F}_2$-vector subspace of $\operatorname{Div}_{C}(\overline{\mathbb{F}_q})\otimes \mathbb{F}_2$ spanned by the $P_i$.
There is a natural action of $\Gal(\overline{\mathbb{F}_q}/\mathbb{F}_q)$ on $V$, which we consider as a representation $\rho:\Gal(\overline{\mathbb{F}_q}/\mathbb{F}_q) \to \operatorname{GL}(V)$. By Galois theory, it is clear that $\Frob$ acts on the set $\{\alpha_i\}_{i=1}^{2g+1}$ with $r$ orbits, one corresponding to each irreducible factor of $f(x)$. The lengths of the orbits are given by the degrees $d_i$ of the factors $f_i(x)$. This means that, in the natural basis of $V$ given by the $P_i$, the action of Frobenius is given by a permutation matrix corresponding to a permutation of cycle type
$
(d_1, d_2, \ldots, d_r)
$.
It follows immediately that the characteristic polynomial of $\rho(\Frob)$ is
\[
\det(t \operatorname{Id}-\rho(\Frob))=(t^{d_1}-1) \cdots (t^{d_r}-1) \in \mathbb{F}_2[t].
\]
On the other hand, by Lemma \ref{lemma: description of 2-torsion} there is a Galois-equivariant exact sequence
\[
0 \to \mathbb{F}_2 \to V \to J[2] \to 0,
\]
where the first map is given by $1\to P_1+P_2+\dots P_{2g+1}$ and the action of $\Frob$ on the sum $P_1+\cdots+P_{2g+1}$ is trivial. This implies that
\[
\det(t \operatorname{Id}-\rho(\Frob))=\det(t\operatorname{Id}-\rho_2(\Frob)) (t-1),
\]
which, combined with our previous determination of the characteristic polynomial of $\rho(\Frob)$, concludes the proof.
\end{proof}

Thanks to the previous lemma, it is easy to obtain the reduction modulo 2 of the $L$-polynomial of any given hyperelliptic curve with an odd degree model. In the next corollary, we use this to produce curves whose $L$-polynomials have particularly simple reductions modulo $2$.

\begin{corollary}\label{cor:PolyCongruences}
Let $f_0(x)=1$ and, for $d=1,\ldots,2g+1$, let $f_d(x) \in \mathbb{F}_q[x]$ be an irreducible polynomial of degree $d$. Further set $f_0(x)=1$. For $d=0,\ldots,g$ consider the unique smooth projective curve $C_d$ birational to the affine curve 
\[
y^2=f_d(x)f_{2g+1-d}(x).
\]
For $d=1,\ldots,g$ we have the congruence
\[
(t-1)P_{C_d}(t) \equiv (t^d-1)(t^{2g+1-d}-1) \equiv t^{2g+1}+t^{2g+1-d} +t^d +1 \pmod{2},
\]
while for $d=0$ we have 
\[
(t-1)P_{C_0}(t) \equiv t^{2g+1}-1 \equiv t^{2g+1} +1 \pmod{2}.
\]
\end{corollary}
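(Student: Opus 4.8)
The plan is to deduce everything from the two preceding results: the congruence $P_{\mathcal{C}}(t) \equiv f_{\mathcal{C}}(t) \pmod 2$ and the explicit computation of $f_{\mathcal{C},2}(t)$ in terms of the factorisation type of the defining polynomial. Since $f_{\mathcal{C},2}(t)$ is by construction the reduction of $f_{\mathcal{C}}(t)$ modulo $2$, combining the two gives $P_{\mathcal{C}}(t) \equiv f_{\mathcal{C},2}(t) \pmod 2$, and it then only remains to feed in the right defining polynomials and read off the factorisation type.

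First I would check that the hypotheses of the two lemmas are met, namely that $f_d(x)f_{2g+1-d}(x)$ is separable of degree $2g+1$. The degree is immediate. For separability, recall that a finite field is perfect, so each irreducible factor is itself separable; for $d=1,\ldots,g$ the two factors $f_d$ and $f_{2g+1-d}$ have distinct degrees, since $d < g+1 \leq 2g+1-d$, and are therefore coprime, so their product is separable, while for $d=0$ the product is simply the separable irreducible polynomial $f_{2g+1}(x)$. In every case $\mathcal{C}_d$ is a genus-$g$ hyperelliptic curve given by an odd-degree model, so the earlier lemmas apply.

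For $d=1,\ldots,g$, the factorisation of $f_d(x)f_{2g+1-d}(x)$ into irreducibles over $\mathbb{F}_q$ consists of the two distinct factors $f_d$ and $f_{2g+1-d}$, each with multiplicity one. The characteristic-polynomial lemma therefore gives $f_{\mathcal{C}_d,2}(t) = (t-1)^{-1}(t^d-1)(t^{2g+1-d}-1)$, whence $(t-1)f_{\mathcal{C}_d,2}(t) = (t^d-1)(t^{2g+1-d}-1)$ in $\mathbb{F}_2[t]$. Using $P_{\mathcal{C}_d}(t) \equiv f_{\mathcal{C}_d,2}(t) \pmod 2$ and expanding the product, while recalling that $-1 \equiv 1 \pmod 2$, yields the stated congruence $(t-1)P_{\mathcal{C}_d}(t) \equiv t^{2g+1}+t^{2g+1-d}+t^d+1 \pmod 2$. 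The case $d=0$ is identical but simpler: the defining polynomial is the single irreducible $f_{2g+1}(x)$ of degree $2g+1$ with multiplicity one, so the lemma gives $f_{\mathcal{C}_0,2}(t) = (t-1)^{-1}(t^{2g+1}-1)$ and hence $(t-1)P_{\mathcal{C}_0}(t) \equiv t^{2g+1}-1 \equiv t^{2g+1}+1 \pmod 2$.

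There is no substantive obstacle here: the corollary is a direct specialisation of the machinery already assembled. The only point requiring a moment's care is the bookkeeping of the factorisation type, in particular verifying that $f_d$ and $f_{2g+1-d}$ are genuinely \emph{distinct} irreducible factors, so that each contributes a factor $(t^{d}-1)$ or $(t^{2g+1-d}-1)$ with exponent one rather than combining into a single squared factor; this is exactly where the inequality $d \neq 2g+1-d$ for $d \leq g$ enters.
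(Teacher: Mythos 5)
Your proof is correct and follows exactly the route the paper takes: the paper's own proof is the one-line observation that the corollary is ``a direct application of the previous lemma, combined with\ldots\ $P_{\mathcal{C}}(t) \equiv f_{\mathcal{C}}(t) \pmod 2$.'' You merely spell out the details the paper leaves implicit (separability of $f_d f_{2g+1-d}$, distinctness of the two irreducible factors via $d \leq g < 2g+1-d$), all of which check out.
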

\begin{proof}
This is a direct application of Lemma \ref{lemma: charpoly mod 2}, combined with the fact that by Corollary \ref{cor:LPolyFPoly} we have $P_{C}(t) \equiv f_{C}(t) \pmod 2$.
\end{proof}

\begin{proof}[Proof of Theorem \ref{thm:OddCharacteristic} for $p$ odd]
The inequality $\dim_{\mathbb{Q}} L_g(\mathbb{F}_q) \leq g+1$ follows immediately from the symmetry relation $a_{g+i}=q^ia_{g-i}$ satisfied by the coefficients of the $L$-polynomials; it thus suffices to establish the lower bound $\dim_{\mathbb{Q}} L_g(\mathbb{F}_q) \geq g+1$.
Consider the $g+1$ curves $C_0,\ldots,C_g$ of Corollary \ref{cor:PolyCongruences} (any choice of the irreducible polynomials $f_d(x)$ will work) and the corresponding $L$-polynomials $P_{C_0}(t),\ldots,P_{C_g}(t)$. Let $M \subseteq \mathbb{Z}[t]$ be the $\mathbb{Z}$-module generated by these polynomials; it is clear that in order to prove the theorem it suffices to show that $\operatorname{rank}_{\mathbb{Z}} M \geq g+1$. Notice that $M \otimes \mathbb{F}_2$ is in a natural way a vector subspace of $\mathbb{F}_2[t]$, and that
\[
\operatorname{rank}_{\mathbb{Z}} M \geq \dim_{\mathbb{F}_2} (M \otimes \mathbb{F}_2).
\]
Let $N \subset \mathbb{F}_2[t]$ be the image of the linear map
\[
\begin{array}{ccc}
M \otimes \mathbb{F}_2 & \to & \mathbb{F}_2[t] \\
q(t) & \mapsto & (t-1)q(t).
\end{array}
\]
The $\mathbb{F}_2$-vector space $N$ is generated by the $g+1$ polynomials $(t-1)P_{C_i}(t)$ for $i=0,\ldots,g$, hence, by Corollary \ref{cor:PolyCongruences}, by the $g+1$ polynomials
\[
t^{2g+1} +1 \quad\text{and}\quad t^{2g+1}+t^{2g+1-i}+t^i+1 \text{ for }i=1,\ldots,g.
\]
It is immediate to check that these $g+1$ polynomials are $\mathbb{F}_2$-linearly independent, which implies
\[
\operatorname{rank}_{\mathbb{Z}} M \geq \dim_{\mathbb{F}_2} (M \otimes \mathbb{F}_2) = \dim_{\mathbb{F}_2} N=g+1.
\]
\end{proof}

\subsection{Proof of Theorem \texorpdfstring{\ref{thm:OddCharacteristic}}{}: \texorpdfstring{$p=2$}{}}\label{sec:even}
We now give the proof of Theorem \ref{thm:OddCharacteristic} in the case $p=2$. As in the case of odd characteristic, we will exhibit $g+1$ curves whose $L$-polynomials form a basis of $L_g(\mathbb{F}_q)$. Recall from Definition \ref{def:Pg} the set $\mathcal{P}_g(\mathbb{F}_q)$.

\begin{proof}[Proof of Theorem \ref{thm:OddCharacteristic} for $p=2$]
Fix $0\leq r\leq g$. Let $h(x) \in \F_q[x]$ be a separable polynomial of degree $r$ such that $h(0)\neq 0$. 
Such a polynomial exists: for $r=0, 1$ we may take $h(x)=1$ or $h(x)=x+1$, respectively, and for $r \geq 2$ it suffices to take as $h(x)$ the minimal polynomial of any element that generates $\mathbb{F}_{q^r}$ over $\mathbb{F}_q$.

Consider the affine curve defined by the equation $y^2+yh(x)=x^{2g+1-r}h(x)$. We claim that this curve is smooth. Indeed, an $\overline{\F_q}$-point $(x_0,y_0)$ on the curve is singular if and only if
\[
\begin{cases}
y_0^2+y_0h(x_0)=x_0^{2g+1-r}h(x_0)\\
h(x_0)=0\\
y_0h'(x_0)=(2g+1-r)x_0^{2g-r}h(x_0)+x_0^{2g+1-r}h'(x_0)
\end{cases}
\]
Here the second and third equations are given by the vanishing of the partial derivatives in $y$ and $x$ of the defining equation, respectively.
By the second equation, $x_0$ is a root of $h$. So, by the first one, $y_0=0$. Hence, the third equation becomes $x_0^{2g+1-r}h'(x_0)=0$: but $x_0\neq 0$ since $h(0)\neq 0$, and $h'(x_0)\neq 0$ since $h$ is separable, so the above system has no solutions. 
Let $C/\F_q$ be the smooth projective curve given by the completion of the curve above. The curve $C$ has genus $g$, because the degree of $x^{2g+1-r}h(x)$ is $2g+1$ and the degree of $h(x)$ is at most $g$.
In particular, $P_C(t)$ is an element of $\mathcal{P}_g(\mathbb{F}_q)$. We will show that the reduction of $P_C(t)$ modulo $2$ has degree $r$.

Let $\ell$ be an odd prime and let $T_\ell J$ be the $\ell$-adic Tate module of the Jacobian $J$ of $C$.
Let $f_{C, \ell^\infty}(t):=\det(t\Id-\rho_{\ell^\infty}(\Frob)\mid T_\ell J)$.
If $\alpha\in \overline{\F_q}$ is a root of $f_{C, \ell^\infty}(t)$ with multiplicity $d$, then $q/\alpha$ is a root of $f_{C, \ell^\infty}(t)$ with multiplicity $d$. Hence, we can write $f_{C, \ell^\infty}(t)=t^gQ_C(t+q/t)$ with $Q_C(t)\in \Z[t]$ of degree $g$. Let $r_2$ be the $2$-rank of $J$, as defined in \cite[Section 1]{p-rank}. By \cite[Proposition 3.1]{p-rank}, $r_2$ is equal to the sum of the multiplicities of the non-zero roots of $Q_C(t)$ modulo $2$. Hence,
\[
Q_C(t)\equiv t^{g-r_2}\tilde{Q}_C(t)\pmod 2
\] 
with $\tilde{Q}_C(t)\in \F_2[t]$ a polynomial of degree $r_2$ such that $\tilde{Q}_C(0)\neq 0$ (in $\F_2$). In \cite[Proof of Theorem 23]{2rank2torsion}, the authors show that the $2$-rank of $J$ is equal to one less than the number of distinct projective points where $H_1(X, Z) \colonequals h(X/Z)Z^{g+1}$ vanishes (see also \cite{MR3095219}). In our case, since $h(x)$ is separable, this implies $r_2 = \deg h(x) = r$.
Hence, we have  
\[
Q_C(t)\equiv t^{g-r}\tilde{Q}_C(t)\pmod 2
\]
with $\tilde{Q}_C(t)$ of degree $r$. As $q$ is a power of $2$, we obtain
\[
f_{C, \ell^\infty}(t)\equiv t^gQ_C\left(t+\frac qt\right)\equiv t^gQ_C\left(t\right)\equiv t^{2g-r}\tilde{Q}_C(t)\pmod 2.
\]
By Theorem \ref{GF-formula},
\begin{equation}\label{eq:PCmod2}
P_C(t)\equiv t^{2g}f_{C, \ell^\infty}\left(t^{-1}\right)\equiv t^{2g}t^{-2g+r}\tilde{Q}_C\left(t^{-1}\right)\equiv t^r \tilde{Q}_C\left(t^{-1}\right)\pmod 2.
\end{equation}
Since $\tilde{Q}_C(0)\not\equiv 0\pmod 2$ we see that the reduction of $P_C(t)$ modulo $2$ has degree $r$.
So, for each $0\leq r\leq g$, we can find a smooth hyperelliptic curve $C_r$ of genus $g$ such that $P_{C_r}(t)$ modulo $2$ has degree $r$. Therefore, the polynomials $\{P_{C_r}(t)\mid 0\leq r\leq g\}$ are linearly independent modulo $2$. The result follows as in the proof of Theorem \ref{thm:OddCharacteristic}.

\end{proof}
\begin{remark}
The polynomial $f_{C, \ell^\infty}(t)$ is monic by definition, which implies that also $Q_C(t)$ and $\tilde{Q}_C(t)$ are monic. By \eqref{eq:PCmod2}, the constant term of $P_C(t)$ modulo $2$ is $1$.
Hence,
\[
P_{C_r}(t)\equiv t^r+1+\sum_{i=1}^{r-1} a_{i,r}t^i\pmod 2.
\]

In fact, one can show that $P_{C_r}(t) \equiv t^r+1\pmod{2}$. To see this, recall from \cite[Theorem 3.1]{SGA7} that, for a smooth projective curve $C / \mathbb{F}_q$, with $q=2^f$, one has
\[
P_{C}(t) \equiv \det\left(1-t \varphi_q^{-1} \bigm\vert H^1_{\text{ét}}\left( C_{\overline{\F_q}}, \Z/2\Z \right) \right) \pmod{2},
\]
where $\varphi_q : \F_q \to \F_q$ is the Frobenius automorphism $x \mapsto x^q$. Next, recall that $H^1_{\text{ét}}\left( C_{\overline{\F_q}}, \Z/2\Z \right)$ is canonically dual to $J(\overline{\F_q})[2]$, so that we may compute $P_{C}(t)$ as the inverse characteristic polynomial of Frobenius acting on $J[2]$. For the curve $C_r$, the explicit description of $J[2]$ given in \cite[Proof of Theorem 23]{2rank2torsion} shows that the action of $\varphi_q$ on $J[2]$ is the natural Galois action on the roots of $h(x)$, that is, an $r$-cycle. It follows that the characteristic polynomial in question is $P_{C}(t) \equiv t^r-1 \pmod{2}$, as claimed.
\end{remark}

\section{Algebraic independence}\label{sec:algind}
Theorem \ref{thm:OddCharacteristic} asserts that Lemma \ref{lemma: properties of L-polynomials} captures all the linear relations among the coefficients of the polynomials $P_C(t)$. In this section, we prove an analogous result that deals with higher-order polynomial relations on the coefficients.
Lemma \ref{lemma: properties of L-polynomials} already gives a number of constraints: for $P_C(t)=\sum_{i=0}^{2g} a_i t^i$ we have $a_0=1$ and $a_{g+i}=q^i a_{g-i}$ for every $i=0, \ldots, g$; it is therefore natural to restrict our analysis to $a_1,\ldots,a_g$.
The following is the main result of this section:

\begin{theorem}\label{thm:PolynomialRelations}
Let $g,d$ be positive integers. There is a constant $e_{g,d}$ such that for any prime power $q>e_{g,d}$ and for any non-zero polynomial $f(x_1,\ldots,x_g) \in \Z[x_1,\ldots,x_g]$ of degree $\le d$ in each variable there is a curve $C \in \mathcal{M}_g(\mathbb{F}_q)$ with $L$-polynomial $P_C(t)=\sum_{i=0}^{2g} a_i t^i$ such that $f(a_1,\ldots,a_g) \neq 0$.
\end{theorem}

Notice that, unlike Theorem \ref{thm:OddCharacteristic}, $e_{g,d}$ cannot be equal to $0$ for all $g$ and $d$, 
since for fixed $q$ and $g$ we can always find a polynomial $f(x_1,\ldots,x_g)$ 
(that may depend on $q$) which vanishes on all the finitely many values of $(a_1,\ldots,a_g)$.

As is the case for Theorem \ref{thm:OddCharacteristic}, the proof of Theorem \ref{thm:PolynomialRelations} exploits the reduction of $f(x_1,\ldots,x_g)$ modulo a positive integer $N$. In this case, instead of a direct computation of the action of the Frobenius on the $N$-torsion points, we use Theorem \ref{thm: equidistribution of charpolys}, which guarantees that, for $q$ large enough, all the characteristic polynomials of the matrices in $\GSp_{2g}^q(\Z/N\Z)$ come from some element of $\mathcal{P}_{g,N}(\mathbb{F}_q)$.

To be more precise, for a curve $C \in \mathcal{M}_g(\mathbb{F}_q)$ and $P_C(t) \in \mathbb{Z}[t]$ its $L$-polynomial, let $f_C(t)=t^{2g}P_C(1/t)$ be its reciprocal polynomial. By Theorem \ref{GF-formula} $f_C(t)$ is equal to the characteristic polynomial of the action of the Frobenius of $C$ (modulo every $\ell$). Theorem \ref{thm: equidistribution of charpolys} implies that, for $q$ large enough (in terms of $N$) and for any $M \in \GSp_{2g}^q(\Z/N\Z)$, the characteristic polynomial of $M$ is equal to the reduction of $f_C(t)$ modulo $N$ for some $C \in \mathcal{M}_g(\mathbb{F}_q)$.
We then prove that there are too many characteristic polynomials of elements of $\GSp_{2g}^q(\Z/N\Z)$ for their coefficients to lie in the zero locus of some $f(x_1,\ldots,x_g)$ of fixed degree.
We are free to choose $N$, and we will always take it to be an odd prime number. We set $N=r$ and use the letter $r$ to avoid confusion.

The following lemma is a version of the well-known Schwartz-Zippel bound. Notice that a polynomial in $g$ variables having degree at most $d$ in each of them has total degree at most $dg$. 
\begin{lemma}\label{lemma: upper bound for hypersurfaces}
Let $g,d$ be natural numbers with $g \ge 1$, let $r$ be a prime number and let $f(x_1,\ldots,x_g) \in \mathbb{F}_r[x_1,\ldots,x_g]$ be a non-zero polynomial of degree $\le d$ in each variable. We have
$$
\# \{(u_1,\ldots,u_g) \in \mathbb{F}_r^g \mid f(u_1,\ldots,u_g)=0 \} \le dg \cdot r^{g-1}.
$$
\end{lemma}

Next, we identify the set of characteristic polynomials of matrices in $\GSp_{2g}^q(\F_r)$. We show the following more general result:
\begin{proposition}\label{prop: Kirby-Rivin for general symplectic group}
    Let $n$ be a positive integer, let $R$ be a commutative ring with $1$, and let $q \in R^\times$.
    Let $p(x) = a_0 + a_1 x + \cdots + a_{2n} x^{2n} \in R[x]$ be a monic polynomial satisfying $a_{n-i}=q^i a_{n+i}$ for all $i=0,\ldots,n$. There exists $M \in \GSp_{2n}(R)$ with multiplier $q$ and characteristic polynomial $p(x)$.
\end{proposition}
\begin{remark}\label{rmk: Rivin}
        The statement is a simple variant of \cite[Theorem A.1]{MR2401624}. We give a detailed argument since, unfortunately, the proof of \cite[Theorem A.1]{MR2401624} seems to contain some typos. For example, in op.~cit.~the matrix $B$ is declared to have determinant $1$, but the construction does not ensure this property; more importantly, in some examples we tried, the given construction does not seem to yield matrices with the claimed characteristic polynomials. Our construction is therefore slightly different from that of \cite[Theorem A.1]{MR2401624}, which we could not fully understand.
\end{remark}

\begin{proof}   
    We work with the symplectic form given by the matrix $J=\begin{pmatrix}
        0 & \operatorname{Id}_n \\
        -\operatorname{Id}_n & 0
    \end{pmatrix}$. We construct the desired $M$ as a block-matrix $M=\begin{pmatrix}
        0 & B \\ C & D
    \end{pmatrix}$, where $B$, $C$, $D$ satisfy the following:
    \begin{enumerate}
        \item $B, C, D$ are square $n \times n$ matrices with $B$ invertible;
        \item $B$ is the symmetric matrix
        \[
        B= \begin{pmatrix}
            0 & 0 & 0 & \cdots & 0 & 1 \\
            0 & 0 & 0 & \cdots & 1 & b_2 \\
            0 & 0 & 0 & \cdots & b_2 & b_3 \\
            &&& \iddots \\
            0 & 1 & b_2 & \cdots & b_{n-2} & b_{n-1} \\
            1 & b_2 & b_3 & \cdots & b_{n-1} & b_n \\
        \end{pmatrix},
        \]
        or, in symbols,
        \[
        B_{ij}= b_{i+j-n} \delta_{i+j \geq n+1} = \begin{cases}
            0, \text{ if } i+j \leq n \\
            1, \text{ if } i+j = n+1 \\
            b_{i+j-n}, \text{ if } i+j>n+1,
        \end{cases}
        \]
where we have set $b_1=1$ and $\delta_{i+j \geq n+1} = \begin{cases}
            1, \text{ if } i+j \geq n+1 \\
            0, \text{ otherwise.}
        \end{cases}$.
Note that any matrix $B$ of this form is invertible for any choice of the $b_i$;
        \item $C = -q ({}^tB)^{-1} = -q B^{-1}$;
        \item $D$ is the companion matrix given by $D=\begin{pmatrix}
            0 & 0 & 0 & \cdots & 0 & 0 & d_1 \\
            1 & 0 & 0 & \cdots & 0 & 0 & d_2 \\
             &   &  & \ddots &   &  \\
             0 & 0 & 0 & \cdots & 0 & 0 & d_{n-2} \\
            0 & 0 & 0 & \cdots & 1 & 0 &d_{n-1} \\
            0 & 0 & 0 & \cdots & 0 & 1 & d_n \\
        \end{pmatrix}$. In symbols, 
        \[
        D_{ij} = \begin{cases}
            1, \text{ if } i=j+1 \\
            d_i, \text{ if } j=n \\
            0, \text{ otherwise.}
        \end{cases}
        \]
    \end{enumerate}
Here $b_2, \ldots, b_n \in R$ and $d_1, \ldots, d_n \in R$ are coefficients to be chosen later. We check the conditions for the matrix $M$ to be symplectic with multiplier $q$. We compute
    \[
    {}^t M J M = \begin{pmatrix}
        0 & -{}^tC B \\ {}^tB C & {}^t B D - {}^tD B
    \end{pmatrix},
    \]
which is equal to $qJ$ if and only if
    \[
    \begin{cases}
        -{}^tC B = q\operatorname{Id} \\
        {}^tB C = -q\operatorname{Id} \\
        {}^t B D - {}^tD B = 0.
    \end{cases}
    \]
The first two equations are equivalent to one another and automatically satisfied by our choice of $C$. The third equation is equivalent to the matrix ${}^tB D = BD$ being symmetric. We claim that this is achieved by taking ($b_1=1$ and) $b_{k+1} = \sum_{i=1}^k b_i d_{n+i-k}$ for $k=1,\ldots,n-1$ (notice that $d_1$ does not occur).
Indeed, the first $n-1$ columns of the product $BD$ are given by the second, third, $\ldots$, $n$-th column of $B$, while the last one is the linear combination $d_1 B^1 + d_2 B^2 + \cdots + d_n B^n$, where we denote by $B^i$ the $i$-th column of $B$. From this, it is immediate to check that the top-left block of $BD$ of size $(n-1) \times (n-1)$ is symmetric (independently of the values of $b_2, \ldots, b_n, d_1, \ldots, d_n$), and we only need to impose that the last line of $BD$ is equal to (the transpose of) its last column. We can also ignore the coefficient in position $(n, n)$, so we compare the first $n-1$ coefficients of the last line of $BD$ with the first $n-1$ coefficients of its last column.
The $k$-th coefficient on the last line is the coefficient on the last line of the $(k+1)$-th column of $B$, that is, $b_{k+1}$. The $k$-th coefficient on the last column is given by
    \begin{align*}
    d_1 B_{k1} + d_2 B_{k2} + \cdots + d_n B_{kn} = \sum_{i=1}^n d_i B_{ki} = \sum_{i=1}^n d_i \delta_{k+i \geq n+1} b_{k+i-n} =
    \sum_{i'=1}^{k} b_{i'} d_{i'+n-k}.
    \end{align*}
Thus, the symmetry condition is satisfied if and only if for $k=1,\ldots,n-1$ we have $b_{k+1} = \sum_{i=1}^{k} b_{i} d_{n+i-k}$, as claimed. Also note that a symplectic matrix with invertible multiplier is itself invertible (because the determinant of a symplectic matrix is a power of its multiplier), so $M$ is invertible and therefore an element of $\GSp_{2n}(R)$.
In particular, for any choice of $d_1, \ldots, d_n$, we have constructed a corresponding matrix $M$ that is symplectic of multiplier $q$ and has $D$ as its bottom-right block of size $n \times n$. 
We now compute the characteristic polynomial of this matrix $M$. Consider the identity
    \[
    \begin{aligned}
    \begin{pmatrix}
        x\operatorname{Id}_n  & -B \\
        -C & x\operatorname{Id}_n-D
    \end{pmatrix} \begin{pmatrix}
        B  & 0 \\
        x \operatorname{Id}_n & B^{-1}
    \end{pmatrix}  & = \begin{pmatrix}
       0  & -\operatorname{Id}_n \\
        x^2 \operatorname{Id}_n -xD -CB & x B^{-1}-DB^{-1}
    \end{pmatrix}  \\ & = \begin{pmatrix}
       0  & -\operatorname{Id}_n \\
        (x^2+q) \operatorname{Id}_n -xD  & x B^{-1}-DB^{-1}
    \end{pmatrix},
    \end{aligned}
    \]
where we have used that -- by definition -- $CB=-q\operatorname{Id}$. Taking determinants on both sides and using that the determinant of the block-matrix $\begin{pmatrix}
        B  & 0 \\
        x \operatorname{Id} & B^{-1}
    \end{pmatrix}$ is $1$, we obtain
    \begin{align*}
    \det(x\operatorname{Id}_{2n} - M) = \det \begin{pmatrix}
       0  & -\operatorname{Id}_n \\
        (x^2+q) \operatorname{Id}_n -xD  & x B^{-1}-DB^{-1}
    \end{pmatrix}  = \det((x^2+q) \operatorname{Id}_n -xD),
    \end{align*}
where the last equality uses basic properties of the determinant of block matrices. Finally, we can rewrite this in the form
    \[
    \det(x\operatorname{Id}_{2n} - M) = x^n \det\left( \left(x + \frac{q}{x}\right) \operatorname{Id}_n - D\right),
    \]
so the characteristic polynomial of $M$ is equal to $x^np_D\left(x+\frac{q}{x} \right)$, where $p_D(x)$ is the characteristic polynomial of $D$.
To conclude the proof, it suffices to show that we can choose $D$ in such a way that $x^np_D\left(x+\frac{q}{x} \right)=p(x)$, where $p(x)$ is the polynomial given in the statement. This is easy: $D$ is a companion matrix, so any monic polynomial with coefficients in $R$ can be realised as $p_D(x)$ for suitable values of $d_1, \ldots, d_n$. Finally, it is an easy exercise to show that a monic polynomial $p(x) = \sum_{i=0}^{2n} a_i x^i$ that satisfies $a_{n-i}=q^ia_{n+i}$ for all $i=0,\ldots,n$ can be written as $x^n p_1\left( x + \frac{q}{x}\right)$ for some monic polynomial $p_1 \in R[x]$ of degree $n$.
\end{proof}

\begin{remark}\label{rmk: integral matrices}
    Inspection of the proof shows that the following slightly stronger statement is true for the case of $R$ being the fraction field of a domain $A$: if the characteristic polynomial $p(x)$ has coefficients in $A$ and $q \in A$, then we may choose $M$ to have coefficients in $A$, \textit{even if the multiplier $q$ is not invertible in $A$}. This applies in particular when $A=\Z_\ell$ and $R=\Q_\ell$.
\end{remark}

\begin{corollary}\label{cor: number of characteristic polynomials}
    Let $r$ be a prime and let $q$ be an integer prime to $r$. The set $\{ \charpol M : M \in \GSp_{2g}^q(\F_r) \}$ has cardinality $r^{g}$.
\end{corollary}
\begin{proof}
    By Proposition \ref{prop: Kirby-Rivin for general symplectic group}, the set in question is the set of all monic polynomials in $\F_r[x]$ of degree $2g$ whose coefficients $a_i$ satisfy $a_{g-i}=q^ia_{g+i}$ for all $i=0,\ldots,g$. Since any choice of the coefficients $a_1, \ldots, a_g$ corresponds to precisely one such polynomial, the total number of polynomials is $r^g$.
\end{proof}

Finally, we connect characteristic polynomials of matrices in $\GSp_{2g}^q(\F_r)$ with characteristic polynomials of Frobenius:
\begin{lemma}\label{lemma: equidistribution gives all gsp}
Let $g,r$ be positive integers. There is a constant $h_{g,r}$ such that for any prime power $q > h_{g,r}$ with $(q,r)=1$ and for any element $M$ of $\GSp_{2g}^q(\mathbb{Z}/r\mathbb{Z})$, there is a curve $C \in \mathcal{M}_g(\mathbb{F}_q)$ such that the reduction of $f_{C}(t)$ modulo $r$ is the characteristic polynomial of $M$.
\end{lemma}
\begin{proof}
This is an immediate consequence of results of Katz-Sarnak \cite{MR1659828}. We give a proof in the language of this paper.

{
For $g=1$, the result follows from the fact that (writing $q=p^n$) every polynomial of the form $t^2+at+q$ with $p \nmid a$ and $|a| \leq 2\sqrt{q}$ is the $L$-polynomial of an elliptic curve over $\F_q$ (see \cite[Theorem 4.1]{MR265369}). Consider first the prime powers $q=p^n$ for which $p$ satisfies $p> 2\sqrt{p}>r$. The integers $a=1, \ldots, r$ realise all the residue classes modulo $r$, are not divisible by $p$, and satisfy $|a| \leq 2\sqrt{q}$, so the corresponding polynomials $t^2+at+q$ are all realised by elliptic curves over $\mathbb{F}_q$, and give all the characteristic polynomials of elements in $\GSp_{2g}^q(\mathbb{Z}/r\mathbb{Z})$. Consider now the prime powers $q=p^n$ for the finitely many primes $p$ that safisfy $2\sqrt{p} \leq r$ or $p \leq 2\sqrt{p}$, with $(p,r)=1$. Suppose that $\lfloor 2\sqrt{q} \rfloor \geq pr$, which holds for all $n$ large enough (with respect to $p$). The integers $1, \ldots, \lfloor 2\sqrt{q} \rfloor$ cover all residue classes modulo $pr$, hence in particular for every residue class modulo $r$ there is $a \in \{1, \ldots, \lfloor 2\sqrt{q} \rfloor\}$ that realises the given class modulo $r$ and is not divisible by $p$ (recall that $(p,r)=1$). As above, $t^2+at+q$ is the $L$-polynomial of an elliptic curve over $\F_q$, and we are done.
}

For $g \geq 2$, the result follows from Theorem \ref{thm: equidistribution of charpolys}, as we now show.
Let $p(t)$ be the characteristic polynomial of $M$.
Notice that $\mu_r^q$ gives positive mass to the singleton $\{p(t)\}$, since $\GSp_{2g}^q(\mathbb{Z}/r\mathbb{Z})$ is a finite set. In fact, since the cardinality of $\GSp_{2g}^q(\mathbb{Z}/r\mathbb{Z})$ is independent of $q$ (it is equal to $\#\Sp_{2g}(\mathbb{Z}/r\mathbb{Z})$, provided only that $(q,r)=1$), we have $\mu_r^q \{p(t)\} \geq c_{g,r} > 0$ for some absolute constant $c_{g,r}$. By Theorem \ref{thm: equidistribution of charpolys}, this implies that $(\charpol_r)_* \mnaive$ is positive at $\{p(t)\}$ for $q$ large enough. Repeating the argument for the finitely many possible polynomials $p(t)$ concludes the proof.
\end{proof}
We can now combine our bounds to conclude the proof of Theorem \ref{thm:PolynomialRelations}.

\begin{proof}[Proof of Theorem \ref{thm:PolynomialRelations}]
Let $r$ be an odd prime number, which will later be required to be large enough. We prove the result for every $q$ which is not a power of $r$; repeating the argument with a different $r$ will prove the statement for every $q$.
First, we can assume that our polynomial $f(x_1,\ldots,x_g) \in \mathbb{Z}[x_1,\ldots,x_g]$ has a coefficient which is non-zero modulo $r$ (otherwise, divide by an appropriate power of $r$). Hence, its reduction modulo $r$ is non-zero.
By Lemma \ref{lemma: equidistribution gives all gsp}, the set of characteristic polynomials of curves in $\mathcal{M}_g(\mathbb{F}_q)$ modulo $r$ is the same as the set of characteristic polynomials of matrices of $\GSp_{2g}^q(\mathbb{F}_r)$ for $q$ large enough and relatively prime with $r$.
Suppose that for every $M \in \GSp_{2g}^q(\mathbb{F}_r)$, writing $\charpol(M)=\sum_{i=0}^{2g} a_i t^i$, we have $f(a_1,\ldots,a_g)=0$. By combining Lemma \ref{lemma: upper bound for hypersurfaces} and Corollary \ref{cor: number of characteristic polynomials} we obtain
$
r^g \le dg \cdot r^{g-1},
$
which implies $r \le dg$. If $r$ is chosen larger than this quantity, we obtain a contradiction.
\end{proof}

\noindent Francesco Ballini, University of Oxford, Andrew Wiles building, Woodstock Road, Oxford, United Kingdom\\
\textit{E-mail address}: \url{Francesco.Ballini@maths.ox.ac.uk}

\smallskip

\noindent Davide Lombardo, Università di Pisa, Dipartimento di matematica, Largo Bruno Pontecorvo 5, Pisa, Italy\\
\textit{E-mail address}: \url{davide.lombardo@unipi.it}

\smallskip

\noindent Matteo Verzobio, IST Austria, Am Campus 1, Klosterneuburg, Austria\\
\textit{E-mail address}: \url{matteo.verzobio@gmail.com}

\end{document}